\newcommand{\HH}{\mathcal{H}} 
\def\centerarc[#1](#2)(#3:#4:#5)
\definecolor{blue_links}{RGB}{13,0,180} 
\newtheorem{theorem}{Theorem}[section]
\newtheorem{lemma}[theorem]{Lemma}
\newtheorem{proposition}[theorem]{Proposition}
\newtheorem{remark}[theorem]{Remark}
\newtheorem*{theorem*}{Theorem}
\newcommand{\R}{\mathbb{R}}
\newcommand{\BBB}{\color{black}} 
\newcommand{\EEE}{\color{black}}
\def\eps{\varepsilon}
\def\dist{\operatorname{dist}}
\def\Xint#1{\mathchoice
    {\XXint\displaystyle\textstyle{#1}}%
    {\XXint\textstyle\scriptstyle{#1}}%
    {\XXint\scriptstyle\scriptscriptstyle{#1}}%
    {\XXint\scriptscriptstyle\scriptscriptstyle{#1}}%
\!\int}
\def\XXint#1#2#3{{\setbox0=\hbox{$#1{#2#3}{\int}$}
\vcenter{\hbox{$#2#3$}}\kern-.5\wd0}}
\def\dashint{\Xint-}
\newcommand{\sm}{\setminus}
\newcommand{\ks}[1]{{\textcolor{black}{#1}}} 
\newcommand{\cl}[1]{{\textcolor{black}{#1}}} 
\numberwithin{equation}{section}
\begin{document} 

\title[Dimension of the singular set for Griffith almost-minimizers]
{Hausdorff dimension of the singular set \\ for Griffith almost-minimizers in the plane}
\author[M. Friedrich]{Manuel Friedrich} 
\address[Manuel Friedrich]{Department of Mathematics, Johannes Kepler Universit\"at Linz. Altenbergerstrasse 66, 4040 Linz,
    Austria}
\email{manuel.friedrich@jku.at}
\author[C. Labourie]{Camille Labourie}
\address[Camille Labourie]{Université de Lorraine, CNRS, IECL, F-54000 Nancy, France}
\email{camille.labourie@univ-lorraine.fr}
\author[K. Stinson] {Kerrek Stinson} 
\address[Kerrek Stinson]{Department of Mathematics, University of Utah, Salt Lake City, UT, USA}
\email{kerrek.stinson@utah.edu}

\subjclass[2010]{49J45, 70G75,  74B20, 74G65, 74R10, 74A30}
\keywords{Griffith functional, epsilon-regularity, uniform rectifiability, porosity}

\begin{abstract}

    We consider regularity of the crack set associated to a minimizer of the Griffith fracture energy, often used in modeling brittle materials. We show  that the crack is uniformly rectifiable  which in conjunction with our previous epsilon-regularity result \cite{FLSeps} allows us to prove that the singular set has dimension strictly less than $1$. This size estimate also applies to almost-minimizers. As a byproduct, we prove   higher integrability for the gradient of local minimizers of the Griffith energy, providing a positive answer to the \BBB analog \EEE of De Giorgi's conjecture \cite{DeGiorgi} {for} the Mumford--Shah functional.
\end{abstract}

\maketitle

\section{Introduction}
\BBB For \ks{over a} century, the pioneering work by {\sc Griffith} \cite{griffith} \ks{has formed} the pillar for  understanding the behavior of brittle materials. His fundamental idea, namely that the propagation of crack \ks{is due to the} competition between stored elastic energy and dissipation related to an infinitesimal increase of the crack, \ks{was recast} in a variational setting by   {\sc Francfort and Marigo}   \cite{Francfort-Marigo:1998} (see also with {\sc Bourdin} in \cite{Bourdin-Francfort-Marigo:2008}).  This approach leads to the minimization of the so-called  \emph{Griffith energy} which, in the \ks{planar} setting, takes the form 
\begin{equation}\label{eq:energy}
    \int_{\Omega\setminus K} \mathbb{C}e(u)\colon e(u) \, \dd{x} + \mathcal{H}^1(K),
\end{equation}
where $\Omega \subset \R^2$ denotes the \emph{reference configuration}, $K \subset \Omega$ the closed \emph{crack set},   $u \colon \Omega \setminus K \to \R^2$ denotes the \emph{displacement} with    symmetrized gradient  $e(u) : = (\nabla u + \nabla u^T)/2$, \EEE and $\mathbb{C}$ represents a fourth-order   tensor of {elasticity} constants.   In analogy to the Mumford--Shah conjecture \cite{David}, \BBB one expects \EEE that within $\Omega$ the crack of a minimizer should consist of smooth curves, meeting at triple junctions or terminating in crack tips. This full conjecture is currently beyond reach since, a priori, there may be other singular points with far more erratic structure. Nevertheless, in this paper  we prove that the number of singular points has dimension strictly less than $1$.

Interest in free discontinuity problems like \eqref{eq:energy} was first driven by applications in image segmentation. In special cases, information from these simpler models carries over to the Griffith energy. Precisely, under the assumption of antiplanar shear, the energy (\ref{eq:energy}) reduces to the classical \emph{Mumford--Shah functional} arising in image segmentation \cite{Ambrosio-Fusco-Pallara:2000}, where the results of {\sc Ambrosio} \cite{Ambrosio:90}, {\sc De Giorgi et al}.\  \cite{deGiorgiCarrieroLeaci}, and {\sc Ambrosio et al}.\  \cite{Ambrosio-Fusco-Pallara:2000} show that a weak minimizer exists, the crack of a minimizer is closed, and \BBB the crack \EEE  coincides with a smooth {submanifold} at most points. An alternative, more variational, path was subsequently developed by {\sc David} and coauthors, see \cite{David} and references therein, to analyze the functional.

In contrast, to account for linearized elasticity in the vectorial problem \eqref{eq:energy},  a host of new techniques \BBB was required \EEE -- even just to obtain existence of minimizers. 
Building on the  function space  $BD$ (bounded deformation), see e.g.\  \cite{ACD}, {\sc Dal Maso} \cite{DalMaso:13} introduced the space $GSBD$, which is essentially the set of displacements with finite Griffith energy. 
As the Griffith energy has no direct coercivity on $u$, initial results for existence of minimizers were conditional, requiring $L^\infty$-bounds on the functions or fidelity terms.
In a series of papers \cite{Friedrich:15-5,Friedrich:15-4,Solombrino}, the first author introduced a new strategy whereby minimizers were found by deriving compactness for modified displacements. This result relied on the \emph{piecewise  Korn inequality}, a powerful functional inequality that is only available in dimension two.
 Using an alternative approach based only on a \emph{Korn inequality for functions with small jump set}  \cite{CCS, Chambolle-Conti-Francfort:2014, Conti-Iurlano:15.2, Friedrich:15-3}, {\sc Chambolle and Crismale} \cite{Crismale} provided an existence result for \BBB minimizers attaining Dirichlet boundary data \EEE in arbitrary dimension. 
Turning towards regularity of the crack set, {\sc Conti et al}.\  \cite{Conti-Focardi-Iurlano:19} showed that, \BBB in dimension two, \EEE the crack is (locally) closed, \BBB which \EEE is a basal regularity result ensuring that the crack does not pulverize the material. Mathematically, this relies on showing \emph{Ahlfors-regularity} of the crack, a uniform concentration estimate of the form
\begin{equation*}
    \frac{1}{C_{\rm Ahlf}} r \leq \mathcal{H}^1(K \cap B(x,r)) \leq C_{\rm Ahlf} r 
\end{equation*}  
for some constant $C_{\rm Ahlf} \ge 1  $ and arbitrary open balls $B(x,r)\subset \Omega$ with center $x \in K$ and radius $r>0$. 
{\sc Chambolle et al}.\  \cite{Iu3} extended this result to higher dimensions, and later {\sc Chambolle and Crismale}~\cite{CrismaleCalcVar} accounted for Dirichlet boundary conditions. Our work in \cite{FLS2} shows that Ahlfors-regularity even holds for almost-minimizers of a wide class of anisotropic and heterogeneous energies.

The first results on $C^{1,\alpha}$-regularity of the crack were obtained by {\sc Babadjian et al}.\ \cite{BIL} in the plane and by the second author and \cl{{\sc Lemenant}} \cite{CL} in arbitrary dimension. While serious advances, both of these epsilon-regularity results relied on topological assumptions \BBB regarding connectedness of the crack \EEE  that are not a priori satisfied by the crack of a minimizer. 
At its core, \BBB such assumptions \EEE circumvent the lack of a coarea formula for $BD$-functions.
A subsequent epsilon-regularity result by \cl{the present authors} \cite{FLSeps} showed that, at least in the plane, these topological constraints could be removed. Implicit in our work was that the singular set of the crack of a Griffith minimizer is $\mathcal{H}^1$-null.

In this paper,   we show two primary results that advance the understanding of  minimizers $(u,K)$ of the   Griffith energy (\ref{eq:energy}) in the plane,  and, in particular, give fine information on the structure of the crack:
\begin{itemize}
    \item We show that minimizers are \emph{uniformly rectifiable}, which means that the crack is locally contained in a curve with controlled geometry.

    \item Tying together our epsilon-regularity result \cite{FLSeps} and the global information carried by uniform rectifiability, we prove a \emph{porosity statement} that can then be used to show that there is a singular set $K^*\subset K$ with dimension \textit{strictly less} than $1$, such that $K\setminus K^*$ is a $C^{1,1/2 }$  curve. \EEE
\end{itemize}

Below we discuss these results at a formal level. For simplicity, we restrict our discussion to minimizers within the introduction. However, we point out that most of our results also apply to almost-minimizers which extends our results to a variety of models related to \eqref{eq:energy}, see \cite[Subsection 2.1]{FLSeps} for examples.

\noindent \textbf{Uniform rectifiability.}
Our goal is to  show that minimizers of the Griffith energy in the plane have a crack $K$ that is \emph{locally uniformly rectifiable}, i.e.,   locally $K$ is  a closed Ahlfors-regular set which is contained in the image $\Gamma(\R)$ of a parametrization $\Gamma \colon \R \to \R^2$ with good geometric properties.
Heuristically, one can think of $\Gamma$ as a Bilipschitz mapping that additionally allows for more general behavior such as cusps or self-intersections.
This is stronger than rectifiability as it relies on a single curve  (instead of countably many) with properties of the parametrization giving quantitative control on $K$ at all scales and locations (instead of {in the blow-up limit} at almost every point).

The first definition of uniform rectifiability appeared in the 90's as a way to characterize the $d$-dimensional sets $K$ of $\R^N$ on which the Calderon-Zygmund kernels define a bounded operator on $L^2(K,\HH^d)$ \cite{DavidKZ}.
It turned out that  uniform rectifiability is an useful notion of regularity for many problems involving sets. Equivalent,  more convenient  {characterizations}  were introduced that do  not rely on covering the set with a single parameterization.
In particular, a one-dimensional set is uniformly rectifiable if and only if in each ball $B(x,r)$ with $x \in K$,  the set $K$ contains a substantial subset of a compact connected set (or a `big piece of a connected set' in the usual terminology), see  Theorem \ref{thm: david} below for details. Notice that this is indeed a quantitative rectifiability property since purely unrectifiable sets are precisely those whose intersection with any compact connected set of finite length is negligible. The fundamental insight behind the big piece of connected sets for Griffifth minimizers is that, in most balls, the crack is `almost connected' in the sense that the size of the gaps is uniformly small.

Our approach to recover this kind of  `connectedness' is modeled on a related argument of {\sc David and Semmes} \cite{DS,DS2} for the Mumford--Shah functional, see  also \cite{David}. The essential difference between the Mumford--Shah functional and the Griffith energy is that, in the prior, the energy penalizes the full gradient, thereby making the \emph{coarea formula} a viable tool. At this point, there is no known \BBB analog \EEE of the coarea formula for the symmetrized gradient as in (\ref{eq:energy}). In the Sobolev setting, the standard tool for gaining control of the full gradient with only the symmetrized gradient is Korn's inequality. Unfortunately, with a crack, the domain of application effectively becomes $\Omega\setminus K$ where one cannot naively apply the Korn inequality as the constants in the estimate depend on the regularity of the domain and thereby the a priori irregular set $K$. To overcome this difficulty, we use the piecewise  Korn inequality for $GSBD$  functions, introduced by the first author in \cite{Friedrich:15-4}, to prove uniform rectifiability. We formally state our result here and refer to Theorem \ref{thm:UniformRect} for the precise details.

\begin{theorem*}
    Let $\Omega\subset \R^2$ be a bounded Lipschitz domain, and let {$(u,K)$} be a minimizer of the Griffith energy (\ref{eq:energy}). Then, the crack $K$ is (locally) uniformly rectifiable.
\end{theorem*}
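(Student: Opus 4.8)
The plan is to deduce local uniform rectifiability of $K$ from the ``big pieces of connected sets'' criterion (Theorem~\ref{thm: david}), so the core task reduces to a single quantitative estimate: for every ball $B(x,r)$ with $x\in K$ and $r$ below some threshold, there is a compact connected set $E\subset \overline{B(x,r)}$ with $\HH^1(K\cap E)\geq c\, r$ for a universal $c>0$. Since Ahlfors-regularity of $K$ is already known from \cite{Conti-Focardi-Iurlano:19} (and \cite{FLS2} for almost-minimizers), the upper and lower mass bounds are in hand; what must be produced is the connected competitor capturing a definite fraction of the crack. Following the David--Semmes strategy \cite{DS,DS2,David} for Mumford--Shah, I would argue by contradiction: if $K$ failed this estimate, then in some ball $K$ would be quantitatively ``far'' from any connected set, which heuristically means the gaps in $K$ are large; I would then construct a competitor $(\tilde u,\tilde K)$ by filling in (a small amount of) crack to reconnect, and show the elastic energy saved dominates the surface energy added, contradicting (almost-)minimality.

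The key steps, in order, are: (1) reduce to the big-pieces-of-connected-sets condition and fix a ball $B=B(x,r)$; (2) in $B$, use the piecewise Korn inequality for $GSBD$ functions \cite{Friedrich:15-4} to replace $u$ on $\Omega\setminus K$ by a modified displacement whose \emph{full} gradient is controlled by the elastic energy, at the cost of adding a controlled amount ($\lesssim \HH^1(K\cap 2B)$, hence $\lesssim r$ by Ahlfors-regularity) to the jump set and passing to a slightly larger ball; (3) apply a coarea-type slicing argument to the modified displacement — now that we control $\nabla u$, not just $e(u)$, level sets behave well — to find a ``good level'' whose boundary, together with $K$, forms a closed curve / connected set trapping $K\cap B$, in the spirit of the connectedness arguments in \cite{BIL,CL}; (4) quantify this: either $K$ already contains a big piece of such a connected set (done), or the constructed connected set shows how to reconnect $K$ cheaply, yielding a competitor that violates (almost-)minimality via the Ahlfors upper bound on the added surface and the Korn-type bound on the elastic gain; (5) localize — the threshold on $r$ and the constants depend only on the Ahlfors constant, the ellipticity of $\mathbb{C}$, and (for almost-minimizers) the gauge, so the estimate is uniform and gives \emph{local} uniform rectifiability up to $\partial\Omega$ using a Lipschitz-domain reflection/extension argument as in \cite{CrismaleCalcVar}.

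The main obstacle I anticipate is step~(3): the absence of a genuine coarea formula for the symmetrized gradient is exactly why the naive Mumford--Shah proof does not transfer, and the piecewise Korn inequality is a two-dimensional, \emph{piecewise} statement — it partitions $B$ into finitely many pieces on each of which $u$ agrees with a Sobolev function up to an infinitesimal rigid motion, with the interfaces contributing to a modified (larger) crack. So the ``modified displacement'' one can slice is not globally $H^1$, and one must carefully track that the extra interfaces introduced by the Korn partition (i) have total length $\lesssim r$, so they do not swamp the budget, and (ii) are themselves connected enough to be absorbed into the competitor crack $E$ without destroying the lower bound $\HH^1(K\cap E)\geq cr$ — in particular one needs that a \emph{definite portion} of $E$ is genuine crack $K$ and not newly added interface. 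Making this bookkeeping quantitative and scale-invariant, and combining it with the elastic estimate to close the contradiction, is where the real work lies; the remaining ingredients (Ahlfors-regularity, the big-pieces characterization, boundary reflection) are essentially off the shelf.
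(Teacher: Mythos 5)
Your proposal correctly identifies the overall strategy — reduce to the ``big pieces of connected sets'' characterization, use Ahlfors-regularity to normalize, and use the piecewise Korn inequality to get around the absence of a coarea formula for the symmetrized gradient — but the mechanism you outline for actually \emph{producing} the connected set is not the one the paper uses, and I don't think it would close as written.

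The paper does not slice the modified displacement. After applying the refined piecewise Korn inequality one obtains a finite Caccioppoli partition $(P_j)$ whose interfaces away from $K$ have length $\lesssim \theta r$, together with rigid-motion approximations on each piece. The connected set is extracted \emph{directly from the partition geometry}: one picks an indecomposable component $Q$ of (a suitably modified version of) the piece $\tilde P_{j_1}$ containing the circular domain $D_1$, and invokes the structure theorem for planar sets of finite perimeter \cite{Ambrosio-Morel} — the saturated reduced boundary of an indecomposable planar set of finite perimeter is a Jordan curve, hence compact and connected. Since $\partial^* Q \cap (B\setminus K)$ is short ($\lesssim \theta r$), and $\overline{\partial^* Q}$ separates $D_1$ from $D_2$ in $B$ and therefore has $\HH^1$-length bounded below by a universal constant, $\Gamma = \overline{\partial^* Q}$ does the job. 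Your coarea-slicing idea runs into exactly the obstacle you flag: $u^\theta$ is only $SBV$, a super-level set of one of its components is merely of finite perimeter and need not be connected, and its reduced boundary mixes pieces of $K$, pieces of the Korn interfaces, and pieces of the genuine level set, with no quantitative way to select a connected subcollection. The indecomposable-component route sidesteps this cleanly.

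The second gap is more structural. You place the competitor/contradiction argument at the end, after the connected set is built. In the paper the competitor construction is a \emph{preliminary} step: it is used to prove that the rigid motions $a_i$ approximating $u$ on two circular domains $D_1,D_2$ (both far from $K$, carved from a circle $\partial B(0,\rho)$ chosen away from $K$) satisfy a quantitative separation $|b_1-b_2| + r|A_1-A_2| \geq c_{\rm jump}\sqrt r$. This lower bound on the jump is then what forces the Caccioppoli-partition pieces $P_{j_1}, P_{j_2}$ that cover most of $D_1$ and $D_2$ to be \emph{distinct}; without it $\overline{\partial^* Q}$ would have no reason to separate $D_1$ from $D_2$ and the length lower bound would collapse. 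Your sketch never produces this separation estimate, so the argument ``either $K$ already has a big piece, or reconnect cheaply'' has no quantitative teeth: absent the jump lower bound you cannot show the constructed connected set captures a definite fraction of $K$ rather than being dominated by the added interfaces. Finally, a small point: no boundary reflection as in \cite{CrismaleCalcVar} is needed here, since the conclusion is genuinely local — it only concerns balls $B(x,2r)\subset\Omega$.
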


We highlight the essential elements of the proof of uniform rectifiability. Since the $2$-elastic energy can always be dominated by surface energy, with equiintegrability in mind, we can show that the (normalized) $p$-elastic energy is small along a significant portion of the crack for fixed $p\in (1,2)$. 
At one of these points, the crack must break the material into nearly rigid pieces. If the crack does not \textit{almost separate} these rigid pieces, the elastic energy must pay the price of continuously transitioning between significantly different values on either side of the crack, and this would contradict the small elastic energy. Here, almost separation  is quantified using the piecewise Korn inequality. 
We refer to Section \ref{sec:UniformRect} for the technical details.

\noindent \textbf{Size of the singular set and porosity.}
As hinted at before, one {expects} the singular set $K^*$ to consist of isolated points, such as triple junctions and crack tips.   Though this specific characterization is currently beyond reach, our techniques  provide a first step towards a fine characterization of the crack set, namely an upper bound for the dimension of $K^*$. In particular,  uniform rectifiability is an important ingredient as  it implies many good geometric properties. One of them is that $K$ is flat near many $x \in K$, \BBB where \EEE \emph{flatness} is a key condition within our epsilon-regularity theorem \cite{FLSeps} to deduce that $K$ is smooth.
Therefore, the uniform rectifiability is instrumental   for proving that in every ball $B(x,r)$ with  $x \in K$  there is a large piece of $K \cap B(x,r)$ that is a smooth  curve. \EEE
This latter property is referred to as the \emph{porosity} of the  singular points  $K^*$.

The porosity is then the principal ingredient to show that $K^*$ has dimension strictly less than $1$ and that $e(u) \in L^p_{\mathrm{loc}}$ for some $p > 2$. We   formally state this result as follows and refer to Theorem~\ref{th: main result} for precise details.

\begin{theorem*}
    Let $\Omega\subset \R^2$ be a bounded Lipschitz domain. Then, there are $\delta  >0$ and $p>2$ such that for any  minimizer  $(u,K)$   of the Griffith energy \eqref{eq:energy} there exists a {closed} set $K^* \subset K$ satisfying
    $${\rm dim}(K^*)<1- \delta  \quad \text{ and }\quad  \text{$K\setminus K^*$ is a  $C^{1,1/2}$-manifold\EEE} $$
along with 
    $$e(u) \in L^p_{\rm loc}(\Omega;  \R^{2\times 2}). \EEE  $$ 
\end{theorem*}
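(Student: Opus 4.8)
The plan is to combine the two main structural results of the paper—uniform rectifiability (Theorem~\ref{thm:UniformRect}) and the epsilon-regularity theorem from \cite{FLSeps}—via a porosity argument. The overall strategy splits into three parts: (i) establish porosity of the singular set $K^*$, (ii) deduce the dimension bound from porosity, and (iii) bootstrap the dimension bound to higher integrability of $e(u)$.

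\textbf{Step 1: Defining $K^*$ and proving porosity.} I would define $K^*$ to be the set of points $x\in K$ such that for no scale $r>0$ is $K\cap B(x,r)$ a $C^{1,1/2}$ graph (equivalently, the complement of the set where the epsilon-regularity theorem of \cite{FLSeps} applies). The key intermediate claim is that $K^*$ is \emph{porous}: there exists $c\in(0,1)$ such that for every $x\in K$ and every $r\in(0,r_0)$ there is a ball $B(y,cr)\subset B(x,r)$ with $B(y,cr)\cap K^* = \emptyset$. To prove this, fix a ball $B(x,r)$. By uniform rectifiability, $K\cap B(x,r)$ contains a big piece of a connected set, which (by standard geometric measure theory, e.g.\ the characterizations behind Theorem~\ref{thm: david}) forces $K$ to be \emph{flat} in many sub-balls $B(z,\rho r)\subset B(x,r)$: in such a sub-ball the normalized flatness $\beta_K$ (Jones-type $\beta$-number or the bilateral flatness used in \cite{FLSeps}) is below any prescribed threshold. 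Separately, since the elastic energy is always dominated by surface energy and since almost-minimizers have controlled energy density, one can choose the sub-ball so that the normalized scaled Dirichlet energy of $u$ in $B(z,\rho r)$ is also small. With both smallness conditions in force, the epsilon-regularity theorem of \cite{FLSeps} applies on (a slightly smaller concentric ball inside) $B(z,\rho r)$, giving that $K$ is a $C^{1,1/2}$ curve there; hence that whole sub-ball avoids $K^*$. Taking $c = \rho/2$ (say) yields porosity. The same argument works for almost-minimizers because uniform rectifiability and the epsilon-regularity theorem are both stated for that class.

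\textbf{Step 2: From porosity to the dimension bound.} This is a classical fact: a porous set in $\R^2$ (or in any doubling metric space) has upper box-counting dimension, hence Hausdorff dimension, strictly less than the ambient dimension, with the gap $\delta$ depending only on the porosity constant $c$. I would invoke this directly. Since $K$ itself has dimension $1$ (Ahlfors-regularity), and porosity of $K^*$ is relative to $K$ with its $\mathcal H^1$-measure, the relevant statement is that a set porous inside a $1$-Ahlfors-regular set has dimension at most $1-\delta(c)$. Intersecting over a countable exhaustion of $\Omega$ by compact sets handles the "locally" aspect, so $\dim(K^*)<1-\delta$ uniformly; and by construction $K\setminus K^*$ is locally a $C^{1,1/2}$ curve.

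\textbf{Step 3: Higher integrability of $e(u)$.} Away from $K^*$, $u$ solves (locally) a linear elliptic system across a $C^{1,1/2}$ interface with natural (traction-free) boundary conditions on $K$, so $e(u)$ is locally bounded—in particular locally in $L^p$ for all $p$—on a neighborhood of $K\setminus K^*$ and on $\Omega\setminus K$. It remains to control the contribution near $K^*$. Here I would use that $K^*$ is small: cover a neighborhood of $K^*$ by balls adapted to the porosity, use the energy bound $\int_{B(x,r)\setminus K}|e(u)|^2\lesssim r$ (Ahlfors-type decay of the elastic energy, from almost-minimality) together with a reverse-Hölder / Gehring-type argument, or more directly estimate $\int |e(u)|^p$ on the dyadic shells around $K^*$ using the dimension bound $\dim(K^*)<1-\delta$ to make the series $\sum_k (2^{-k})^{\,\text{(gain from }\delta)}$ summable for $p=2+\delta'$ with $\delta'$ small. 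Combining the two regions gives $e(u)\in L^p_{\mathrm{loc}}(\Omega;\R^{2\times2})$.

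\textbf{Main obstacle.} The crux is Step~1, specifically extracting \emph{both} smallness conditions—flatness of $K$ and smallness of the scaled Dirichlet energy of $u$—in a common sub-ball, and verifying that the hypotheses match exactly those of the epsilon-regularity theorem in \cite{FLSeps}. Flatness in many sub-balls comes from uniform rectifiability (via the big-piece-of-a-connected-set characterization and the fact that connected Ahlfors-regular sets are flat away from a small portion of scales and locations), but one must quantify this carefully and ensure the "good" sub-balls for flatness can be chosen to also have small elastic energy; the latter uses an equiintegrability/averaging argument on the elastic energy density, exactly the $p$-elastic smallness idea highlighted after Theorem~\ref{thm:UniformRect}. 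Threading these two smallness regimes together, with all constants uniform in the almost-minimality parameters, is the technical heart of the proof.
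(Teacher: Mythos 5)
Your Step 2 and the scaffolding of Step 3 are essentially what the paper does, and your general strategy (porosity via uniform rectifiability plus epsilon-regularity, then an abstract dimension bound, then a covering argument for higher integrability) is the right outline. But Step 1 contains a real gap: you have the wrong set of hypotheses for the epsilon-regularity theorem. You treat the theorem as requiring smallness of the flatness $\beta_K$ together with smallness of the normalized (Dirichlet/elastic) energy $\omega_2$, which is the hypothesis of the \emph{Mumford--Shah} epsilon-regularity theorem used by Rigot. The Griffith theorem (Theorem~\ref{th: eps_reg}) instead requires
\[
\beta_K(x_0,r_0) + J_u(x_0,r_0)^{-1} + h(r_0) \le \eps_0,
\]
i.e.\ smallness of the flatness \emph{and a quantitative lower bound on the normalized jump} $J_u$ of \eqref{def:normalizedJump}. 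Flatness and a small elastic energy do not by themselves give this; you must additionally show that the rigid motions approximating $u$ on the two sides of the crack are genuinely far apart. This is precisely the content of Lemma~\ref{lem_init_J} (``Initializing $J$''), which occupies all of Subsection~\ref{sec: look at jumo} and is the new ingredient compared to the scalar setting. Its proof goes through an intermediate quantity $J_{\rm aff}$ (Lemma~\ref{lem:forcedJump}, obtained by a competitor construction similar in spirit to Step~5 of Theorem~\ref{thm: david}) and then requires a careful change of center and scale (using the rigidity of skew-symmetric matrices) to pass from a bound on $J_{\rm aff}$ to a bound on $J$. Without this step, invoking epsilon-regularity in your ``good'' sub-balls is not justified, so your porosity conclusion does not follow.

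On Step 3, the paper does not run a direct dyadic-shell estimate tied to $\dim(K^*)<1-\delta$; rather it verifies the three hypotheses of the abstract covering lemma \cite[Lemma 7.1]{CL2} (a De Philippis--Figalli style result): Ahlfors regularity of $K$, porosity of $K^*$ (from Proposition~\ref{prop_porosity}), and the scale-invariant $L^\infty$ bound $\sup_{B(x,r/2)}|e(u)|^2 \le C_0 r^{-1}$ on balls that avoid $K$ or where $K$ is a $C^{1,1/4}$-graph, via interior and boundary (Neumann, traction-free) Schauder estimates. Your sketch is plausible but hand-waves the scale-invariance needed to sum the estimates; the covering lemma is what packages this cleanly. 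This part is a routine discrepancy, not a gap, but the missing initialization of $J$ in Step~1 is a genuine hole in the argument.
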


\cl{Following the Mumford--Shah conjecture and \cite[Conjecture 4]{DLF},}
we expect that $K^*$ {is composed of isolated points} and that $e(u) \in L^{4,\infty}_{\mathrm{loc}}$ but even for the Mumford--Shah functional in the planar setting, these are still major open questions.  For minimizers of the Mumford--Shah functional, the porosity property was {pioneered} by {\sc David} \cite{DavidC1} in dimension $2$, and generalized to every dimension by {\sc Maddalena and Solimini} \cite{MaSo} and {\sc Rigot} \cite{Rigot}.
{\sc Ambrosio, Fusco, and Hutchinson} \cite{AFH} observed that the dimension of $K^*$ can also be estimated by the integrability exponent of $\nabla u$. More precisely, if $(u,K)$ is a local minimizer in an open set $\Omega \subset \R^N$ and $\nabla u \in L^p_{\mathrm{loc}}(\Omega; \BBB  \R^{N}) \EEE$ for some $p > 2$, then
\begin{equation*}
    \mathrm{dim}(K^*) \leq \max(N-2,N-p/2).
\end{equation*}
{\sc De Giorgi}  conjectured  that local minimizers satisfy $\nabla u \in L^p_{\mathrm{loc}}$ for some $p > 2$,  see  \cite[Conjecture~1]{DeGiorgi}. {\sc De Lellis and Focardi} \cite{DLF} actually observed that the Mumford--Shah conjecture in dimension $2$ is equivalent to $\nabla u \in L^{4,\infty}_{\mathrm{loc}}$.
Therein they {also} proved the  higher integrability of the gradient (for a certain $p > 2$) in dimension $2$, and {\sc De Philippis and Figalli} \cite{DPF} generalized this to every dimension.

Though these connections have not yet been made for the Griffith energy, similar statements might be expected. It  indicates that our theorem on the size of the singular set $K^*$ is an important step towards understanding if a Mumford--Shah type conjecture holds in the present  setting.

\noindent \textbf{Organization of the paper.}
We briefly outline the organization of the paper. In Section \ref{sec:prelim}, we introduce the definitions regarding minimizers of the Griffith energy and state our main results. In Section \ref{sec: prelim}, we recall mathematical tools that will be used throughout the paper, such as the epsilon-regularity theorem,  sets of finite perimeter, and Korn inequalities. The proof of uniform rectifiability of the crack is within Section \ref{sec:UniformRect}. Finally, we estimate the size of the singular set and conclude the primary result of the paper in Section~\ref{sec: main result}.

\section{Basic notions and main results}\label{sec:prelim}
\subsection{Basic  notions}\label{sec: basic not}
We begin by introducing a variety of definitions that will be required to state our main results and are used repeatedly within the paper.

Throughout, we let $\Omega \subset \R^2$ be an open set.  We denote the open ball centered at $x\in \Omega$ with radius $r>0$ by $B(x,r)$. {The} two-dimensional Lebesgue and the one-dimensional Hausdorff measure are {denoted} by $\mathcal{L}^2$ and $\mathcal{H}^1$, respectively.  
The sets of symmetric and skew symmetric matrices are  denoted by $\R^{2 \times 2}_{\rm sym}$ and $ \R^{2 \times 2}_{\rm skew}\EEE$.  {The unit circle in $\R^2$ is given by $\mathbb{S}^1$.} We write $U \subset \subset \Omega$ if $\overline{U} \subset \Omega$. By $\chi_U$ we denote the characterisic function of $U$. The {Euclidean} distance of a point $y \in \R^2$ from a set $U$ is denoted by ${\rm dist}(y,U)$. We often use the notation $\pm$ as a placeholder for $+$ and $-$. {The average} integral over a set $U$ {is} denoted by $\dashint_U$. An affine map is called  an \textit{(infinitesimal) rigid motion} if its gradient is a skew-symmetric matrix, and is usually written as $a(x) = A\,x + b$ for $A \in \R^{2 \times 2}_{\rm skew}$ and $b \in \R^2$.  Given an open set ${U} \subset \R^2$, we also define $LD({U})$ as
\begin{equation*}
    LD({U}) := \Big\{ \text{$u \in W^{1,2}_{\mathrm{loc}}(U;  \R^2 \EEE )$ such that $\int_{{U}} \abs{e(u)}^2 \dd{x} < +\infty$} \Big\}.
\end{equation*}
Here, $e(u) := (\nabla u + \nabla u^T)/2$ denotes the \emph{symmetrized gradient} of $u$. In a similar fashion, we define $LD_{\rm loc}({U})$.

We  proceed with some basic notions in order to study almost-minimizers of the Griffith energy.

\noindent{\bf Admissible  pairs.}
We define an \emph{admissible pair} as a pair $(u,K)$ such that $K$ is a relatively closed subset of $\Omega$ and $u \in W^{1,2}_{\mathrm{loc}}(\Omega \setminus K;\R^2)$.

\noindent{\bf {Elasticity tensor and} Griffith energy.}
For {the entire paper,} we fix a linear  map $\mathbb{C} : \R^{2 \times 2} \to \R^{2 \times 2}_{\rm sym}$ such that
\begin{equation*}
    \mathbb{C}(\xi - \xi^T) = 0 \quad \text{and} \quad \mathbb{C} \xi : \xi \geq  c_0  \abs{\xi + \xi^T}^2 \quad \text{ for all } \xi \in \R^{2 \times 2}
\end{equation*}
for some $c_0 > 0$.
Given an admissible pair $(u,K)$ in $\Omega$ and $\ks{B} \subset \Omega$, we define the \emph{Griffith energy} of $(u,K)$ in $B$ as
\begin{align}\label{eq: main energy}
    \int_{B \setminus K} \mathbb{C} e(u) \colon e(u) \, {\rm d}x + \mathcal{H}^1(K \cap B). 
\end{align}

\noindent{\bf Competitors.}
Let $(u,K)$ be an admissible pair and let $x \in \Omega$ and $r > 0$ {be} such that ${B(x,r)} \subset \subset \Omega$. We say that $(v,L)$ is a \emph{competitor} of $(u,K)$ in $B(x,r)$ if  $(v,L)$ is  an admissible pair  such that
\begin{equation*}
    L \setminus B(x,r) = K \setminus B(x,r) \quad \text{and} \quad v = u \  \ \text{a.e. in} \ \Omega \setminus \big(K \cup B(x,r)\big).
\end{equation*}

\noindent{\bf Local minimizers and almost-minimizers.}
We call a \emph{gauge} a non-decreasing function $h\colon (0,+\infty) \to [0,+\infty]$ such that $\lim_{t \to 0^+} h(t) = 0$.  \BBB Our uniform rectifiability result holds for any gauge whereas for the regularity of almost-minimizers we use the explicit  form $h(t) = h(1) t^{\alpha}$, where $\alpha \in (0,1)$,  due to the application of the  epsilon-regularity result \cite {FLSeps}. \EEE  Moreover, we say that a pair $(u,K)$ is \emph{coral} provided that, for all $x \in K$ and for all $r > 0$,
\begin{equation}\nonumber
    \HH^{1}(K \cap B(x,r)) > 0.
\end{equation}
We say that an admissible, coral pair $(u,K)$ \BBB with locally finite energy \EEE is a \emph{Griffith local almost-minimizer with gauge $h$} in $\Omega$
if, given any  $x \in \Omega$ and  $r > 0$ with ${B(x,r)} \subset \subset \Omega$, all competitors $(v,L)$ of $(u,K)$ in $B(x,r)$ satisfy 
\begin{multline}
    \int_{B(x,r) \setminus K} \mathbb{C} e(u)\colon e(u)    \dd{x} + \HH^{1}\big(K \cap B(x,r)\big) \le  \int_{B(x,r) \setminus L} \mathbb{C} e(v)\colon e(v)    \dd{x} + \HH^{1}\big(L \cap B(x,r)\big) + h(r) r. \label{amin}
\end{multline}
In the case $h = 0$, we say that $(u,K)$ is a \emph{Griffith local minimizer}. In the following, we frequently omit the words `local' and `gauge', and  refer only to Griffith almost-minimizers for simplicity. 

We remark that the coral assumption is used simply  to choose a good representative for the crack $K$, analogous to choosing a good representative of a Sobolev function. In particular, it ensures that  (\ref{eqn:AhlforsReg}) below holds for \textit{all} points $x\in K$. \ks{We also refer to \cite[Subsection 2.1]{FLSeps} for further examples of relevant almost-minimizers.}

\noindent{\bf Ahlfors regularity.}
By the results in \cite{FLS2}, if $(u,K)$ is an almost-minimizer of the Griffith energy with any gauge $h$, then $K$ is (locally) Ahlfors-regular. We remark that  this is proven for Griffith minimizers in \cite{Iu3,  CrismaleCalcVar, Conti-Focardi-Iurlano:19}.  Precisely, {\cite{FLS2} shows} that there exist  constants $C_{\rm Ahlf} \geq 1$ and $\varepsilon_{\rm Ahlf}  \in  (0,1) \EEE $ (depending only on $\mathbb{C}$) such that for all $x \in K$, $r > 0$ with $B(x,r) \subset \Omega$ and $h(r) \leq \varepsilon_{\rm Ahlf}$, we have
\begin{equation}\label{eqn:AhlforsReg}
    \frac{1}{C_{\rm Ahlf}}r \leq \mathcal{H}^1\big(K\cap B(x,r)\big)\leq {C_{\rm Ahlf}}r.
\end{equation}
As a direct consequence {of \BBB the \EEE definition}, we mention that almost-minimizers satisfy
\begin{equation}\label{eqn:AhlforsReg2}
    \int_{B(x,r) \setminus K}  |e(u)|^2 \, \dd x     \leq {\bar{C}_{\rm Ahlf}}r  
\end{equation}
for all $x\in K$ and $r>0$  with  $B(x,r) \subset \Omega$ and $h(r) \le \varepsilon_{\rm Ahlf}$,  where $\bar{C}_{\rm Ahlf}$ depends only on $\mathbb{C}$. Indeed, this follows from \eqref{amin} by comparing $(u,K)$ with $v = u \chi_{\Omega \setminus B(x,\rho)}$  and $L = (K \setminus B(x,\rho)) \cup \partial B(x,\rho)$ {for $\rho < r$, and then letting $\rho \to r$}. (If $B(x,r) \subset \subset \Omega$, one can directly take $\rho = r$.)

\noindent \textbf{Uniform rectifiability.}  {Here we adapt}  \cite[Definition 27.17]{David}. We say that  $K$ is \emph{locally uniformly  {rectifiable}} {in $\Omega$ with gauge $h$} and constants $C_{\rm ur} \geq 1$ and $\eps_{\rm ur}>0$ provided that for all $x \in K$ and  for all $r > 0$ with $B(x,2r) \subset \Omega$ and $h(2r) \le \eps_{\rm ur}$  it holds
\begin{equation}\label{eq_defi_ur}
    \text{$K \cap B(x,r)$ is contained in an Ahlfors-regular curve with constant $C_{\rm ur}$,}
\end{equation}
where we say that a set is an \emph{Ahlfors-regular curve} with constant $C > 0$ if it is a set of the form ${\Gamma}(\R)$, where ${\Gamma} \colon \R \to \R^2$ is a Lipschitz function such that
\begin{equation}\label{eq_z1}
    \abs{{\Gamma}(y) - {\Gamma}(z)} \leq \abs{y - {z}} \quad \text{for all  $y,{z} \in \R$}
\end{equation}
and
\begin{equation}\label{eq_z2}
    \HH^1\big(\set{{z} \in \R \colon \,  {\Gamma}({z}) \in B(y,r)}\big) \leq C r \quad \text{for $y \in \R^2$ and $r > 0$}.
\end{equation}
The important point of the definition is that, in contrast to the usual definition of {rectifiability}, $K \cap B(x,r)$ is contained in a \emph{single curve}  and the properties of the function $\Gamma$  give a quantitative control on $K \cap B(x,r)$ at \emph{all scales and locations}.  
{Here, the notion is \emph{local} because it only holds in balls $B(x,r)$  well contained in $\Omega$  with a controlled gauge, \EEE {and} the parametrization in (\ref{eq_defi_ur}) might depend on $B(x,r)$. A (globally)  uniformly rectifiable set would be  entirely contained in a single Ahlfors-regular curve. Still, in our definition  \eqref{eq_defi_ur}--\eqref{eq_z2}  it is important that the constant in \eqref{eq_z2}  does not depend on $B(x,r)$, as this shows that we have a {uniform} control on the geometric structure at all scales {and locations}.}

\subsection{Main results}

We let $\Omega \subset \R^2$ be an open set. We recall the definitions of the Griffith energy in \eqref{eq: main energy} and  of almost-minimizers in \eqref{amin}. We now formulate the main result of the paper.

\begin{theorem}[Regularity of almost-minimizers]\label{th: main result}
    (i) For each choice of exponent $\alpha \in (0,1)$, there exists a constant $\delta  > 0$ (depending on $\mathbb{C}$ and $\alpha$) such that the following holds.
    If $(u,K)$ is an almost-minimizer of the Griffith energy \eqref{eq: main energy} with gauge $h(t) = h(1) t^\alpha$,  there exists a {closed} set $K^* \subset K$ of Hausdorff dimension  at most {$1- \delta  $} such that $K$ is a $C^{1,\alpha/2}$-{graph} in {a} neighborhood of each $x \in K \setminus K^*$.

    (ii) Moreover, if $(u,K)$ is a local minimizer ($h \equiv 0$), it holds that $e(u) \in L^{p}_{\rm loc}(\Omega;\R^{2 \times 2})$ for some constant $p >2$ that depends on $\mathbb{C}$. More precisely, there exist  constants $C \geq 1$ and $\varepsilon > 0$ (both depending $\mathbb{C}$) such that for all $x \in \Omega$ and  $r > 0$ with $B(x,2r) \subset \Omega$,
    \begin{equation}\label{eq: almost last}
        \int_{B\left(x,r\right)} \! \abs{e(u)}^{{p}} \dd{x} \leq C r^{2-{p}/2}.
    \end{equation}
\end{theorem}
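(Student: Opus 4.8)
The plan is to bootstrap from the two main geometric inputs already available: the epsilon-regularity theorem of \cite{FLSeps} and the uniform rectifiability of the crack (Theorem \ref{thm:UniformRect}). First I would establish the porosity of the singular set $K^*$. The idea is that uniform rectifiability forces $K \cap B(x,r)$ to lie inside an Ahlfors-regular curve; at $\HH^1$-a.e.\ scale and location on such a curve, the set is \emph{flat}, i.e.\ the relevant Jones-type $\beta$-numbers are small on a big piece of $K \cap B(x,r)$. On the flat part, the epsilon-regularity hypotheses of \cite{FLSeps} are met—flatness plus the smallness of the normalized elastic energy (which, as in the argument for uniform rectifiability, holds on a substantial portion of the crack by a Chebyshev/equiintegrability argument using \eqref{eqn:AhlforsReg2})—and so $K$ is a $C^{1,\alpha/2}$ graph there. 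Hence in every ball $B(x,r)$ centered on $K$ there is a sub-ball $B(y,cr) \subset B(x,r)$, with $c$ a universal constant, on which $K$ is smooth and therefore disjoint from $K^*$. This is precisely the statement that $K^*$ is porous, and porosity of a subset of a space of Assouad (hence Hausdorff) dimension $1$ yields $\dim(K^*) \le 1 - \delta$ for an explicit $\delta$ depending on the porosity constant; the porosity constant in turn depends only on $\mathbb{C}$ and $\alpha$, which is why $\delta$ does. This gives part (i).

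For part (ii), with $h \equiv 0$, I would derive the higher integrability $e(u) \in L^p_{\mathrm{loc}}$ from the same circle of ideas via a reverse-Hölder / Gehring-type argument run on dyadic scales. The point is that on the smooth part $K \setminus K^*$, standard elliptic regularity up to the $C^{1,\alpha/2}$ crack gives interior-type $L^p$ bounds on $e(u)$ with the scaling $\int_{B(x,r)} |e(u)|^p \le C r^{2-p/2}$, and away from $K$ (distance comparable to the ambient scale) one has classical interior estimates for the linear elasticity system, again with the correct scaling. The remaining contribution comes from neighborhoods of $K^*$, whose small dimension (from part (i)) means it carries no $\HH^1$-mass and, more quantitatively, that the measure of its $\rho$-neighborhood decays like $\rho^{1+\delta}$; combined with the energy bound \eqref{eqn:AhlforsReg2} one can show this contribution is lower-order. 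Assembling these pieces on a Whitney-type decomposition adapted to $K$, and summing the scaled estimates, produces \eqref{eq: almost last} with the stated exponent $p = p(\mathbb{C}) > 2$ and constant $C$, $\varepsilon$ depending only on $\mathbb{C}$.

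I expect the main obstacle to be making the passage from \emph{flatness of the crack} to \emph{applicability of epsilon-regularity} fully quantitative and uniform. The epsilon-regularity result \cite{FLSeps} requires smallness of both a flatness parameter and the normalized elastic (or jump) energy \emph{simultaneously} in the same ball; uniform rectifiability supplies flatness on a big piece, and the energy smallness holds on a big piece, but one must intersect these big pieces while retaining a fixed (universal) proportion of $K \cap B(x,r)$, so that the resulting good sub-ball has radius bounded below by a universal fraction of $r$. This is a covering/measure-theoretic bookkeeping step where the Ahlfors-regularity lower bound in \eqref{eqn:AhlforsReg} is essential, and the constants must be tracked to confirm the dependence claimed for $\delta$. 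A secondary technical point is ensuring the elliptic estimates near the $C^{1,\alpha/2}$ crack are genuinely scale-invariant with the right power of $r$; this requires rescaling the $C^{1,\alpha/2}$ parametrization correctly and is routine but must be done with care so that the exponent $\alpha/2$ does not degrade the integrability exponent $p$ in part (ii) below $2$.
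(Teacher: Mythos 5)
Your sketch of part (i) elides the main technical obstacle, and in fact mis-states the hypothesis of the epsilon-regularity theorem. Theorem~\ref{th: eps_reg} does \emph{not} ask for smallness of the normalized elastic energy $\omega_2$ alongside flatness; it asks for $\beta_K(x_0,r_0) + J_u(x_0,r_0)^{-1} + h(r_0) \le \eps_0$, i.e.\ for the normalized \emph{jump} $J_u$ to be \emph{large}. Your proposal finds, via Chebyshev on the square functions supplied by uniform rectifiability and by \eqref{eqn:AhlforsReg2}, a sub-ball where both $\beta$ and $\omega_p$ are small; but this alone does not verify the epsilon-regularity hypotheses. The nontrivial step is precisely to upgrade ``$\beta + \omega_p$ small'' to ``$J^{-1}$ small,'' and this is the content of Lemma~\ref{lem_init_J}. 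It is not automatic: one must first show (Lemma~\ref{lem:forcedJump}, by a wall-set/filling-in competitor as in Step~5 of Theorem~\ref{thm: david}) that the auxiliary affine jump $J_{\rm aff}$ is bounded below, and then — because $J_{\rm aff}$ could be large purely due to the difference of rotation parts $A^+-A^-$, with the actual jump $J$ along $K$ vanishing — one must recenter the ball and exploit the rigidity of skew-affine maps (the identity $|Av| = \tfrac{1}{\sqrt 2}|A||v|$) together with Lemma~\ref{lem_aA} to propagate the bound to $J$ at a slightly smaller scale. Your self-critique identifies a real bookkeeping issue (intersecting ``big pieces'') but mislocates the difficulty: the paper's construction is a nested cascade of scales, not an intersection, and the genuinely new ingredient is the jump initialization, which your write-up does not address and which is not a routine consequence of the $\beta$ and $\omega_p$ bounds.

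Your sketch for part (ii) is closer to the mark and is essentially the De~Philippis--Figalli strategy that the paper invokes through \cite[Lemma~7.1]{CL2}: Ahlfors regularity, porosity of $K^*$ in the quantitative form of Proposition~\ref{prop_porosity} (one $C^{1,\alpha}$ sub-ball of universal relative size in every ball centered on $K$), interior and up-to-the-boundary Schauder estimates for the linear elasticity system (the latter requiring straightening a $C^{1,\alpha}$ crack and reflecting, as in \cite{FFLM07}), and a Gehring-type summation. Two caveats worth flagging if you were to execute it: (a) the decay of $\mathcal{L}^2$ of the $\rho$-neighborhood of $K^*$ that you invoke is not directly used in the paper's route — the scheme of \cite{DPF,CL2} runs purely on the porosity and the scale-invariant $L^\infty$ bound \eqref{ass:3}, so the dimension bound from part (i) is not needed as an input for part (ii); (b) the scale-invariance of the Schauder estimate near the crack requires a normalization of the Hölder norm of the graph parametrization (the paper insists $r^\alpha\|f'\|_{C^\alpha}\le 1/16$), a point you correctly flag as needing care.
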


We note that within the introduction we stated the theorem for $\alpha  =1 $ for minimizers, which is possible using \cite[Remark 2.5(i)]{FLSeps}. Our main result is (in part) a consequence of uniform rectifiability, {which is of interest in its own right.}

\begin{theorem}[Uniform rectifiability]\label{thm:UniformRect}
    Let $(u,K)$ be an almost-minimizer of the Griffith energy \eqref{eq: main energy} with respect to a gauge $h$. Then, $K$ is locally uniformly rectifiable {in $\Omega$ with gauge $h$} and constants $C_{\rm ur} \geq 1$  and \EEE $\varepsilon_{\rm ur} > 0$ (depending  only on $\mathbb{C})$.
\end{theorem}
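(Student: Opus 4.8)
The plan is to invoke the David--Semmes type criterion (``big pieces of connected sets'', stated as Theorem~\ref{thm: david} in the paper) and verify that in every admissible ball $B(x,r)$ the crack $K\cap B(x,r)$ contains a definite fraction of a compact connected set of controlled length. Since $K$ is already Ahlfors-regular by \eqref{eqn:AhlforsReg} (valid for all $x\in K$ thanks to corality) with constants depending only on $\mathbb{C}$, the whole problem reduces to this ``almost-connectedness'' statement at a fixed scale. After rescaling we may assume $r=1$ and $x=0$, and by the gauge bound we may treat the energy as essentially that of a true minimizer up to an additive error $h(2)\le\eps_{\rm ur}$. Crucially, we will \emph{not} attempt this in every ball: instead we show that the desired property holds in a \emph{substantial proportion of scales/locations}, which by the characterization of uniform rectifiability (a Carleson-type/``big piece'' criterion) is enough.

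The core mechanism is the one sketched right after the statement in the introduction, following \cite{DS,DS2,David}. First, since the $2$-elastic energy is dominated by the surface term via \eqref{eqn:AhlforsReg2}, an equiintegrability / averaging argument over scales produces, for a fixed $p\in(1,2)$, a large set of balls $B(x,r)$ in which the normalized $p$-elastic energy $r^{-1}\int_{B(x,r)\setminus K}|e(u)|^p\,dx$ is as small as we like. Fix such a good ball. The claim is that in it $K$ must ``almost separate'' $B(x,r)$ into nearly rigid pieces; equivalently, the gaps in $K$ are uniformly small, which yields a connected set containing a big piece of $K\cap B(x,r)$. To prove the separation, argue by contradiction: if $K\cap B(x,r)$ left a non-negligible ``passage'', then $u$ would be (quantitatively) close to a single infinitesimal rigid motion $a(y)=Ay+b$ on a large region straddling that passage, while on the complementary regions $u$ is close to possibly very different rigid motions. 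Reconciling these different rigid motions across a region of small surface energy forces $e(u)$ to be non-small in an $L^p$ sense, contradicting the choice of the good ball. The quantitative control of ``nearly rigid pieces'' and of how much the rigid motions can differ is exactly what the \emph{piecewise Korn inequality} for $GSBD$ functions \cite{Friedrich:15-4} provides: it decomposes $\Omega\setminus K$ (up to a small exceptional set, with the caveat that it is genuinely two-dimensional) into finitely many pieces on each of which $u$ minus an infinitesimal rigid motion is controlled by $\|e(u)\|_{L^p}$, with the number of pieces and the geometry controlled in terms of $\mathcal{H}^1(K)$.

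I would organize the argument as follows. Step 1: reductions — rescaling, absorbing the gauge, and recording the Ahlfors bounds \eqref{eqn:AhlforsReg}–\eqref{eqn:AhlforsReg2}. Step 2: the good-scales lemma — using \eqref{eqn:AhlforsReg2} and a Chebyshev/Carleson argument, show that for any $\eta>0$, in ``most'' balls $B(x,r)$ one has $r^{-1}\int_{B(x,r)\setminus K}|e(u)|^p\le\eta$. Step 3: in such a ball apply the piecewise Korn inequality to get a partition into nearly rigid pieces $P_i$ with associated rigid motions $a_i$, and a bound $\sum_i \|u-a_i\|_{?}\lesssim \|e(u)\|_{L^p(B(x,r))}$ together with control on the lengths of the interfaces creating the partition. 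Step 4: the topological/separation step — show that unless $K\cap B(x,r)$ contains a connected subset capturing a fixed fraction of its mass, two of the pieces $P_i,P_j$ are ``connected through a corridor'' of small surface energy, and then use the trace estimates to derive a lower bound on $\|e(u)\|_{L^p}$ that contradicts Step 2 for $\eta$ small. Step 5: conclude via Theorem~\ref{thm: david} that $K$ is locally uniformly rectifiable with constants depending only on $\mathbb{C}$, and track through the rescaling to produce $C_{\rm ur}$ and $\eps_{\rm ur}$.

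The main obstacle is Step 4 — the passage from ``small elastic energy'' to ``$K$ is almost connected'' — because here the absence of a coarea formula for the symmetrized gradient bites hardest: one cannot simply slice and count connected components of level sets as in Mumford--Shah. The replacement is the piecewise Korn inequality, but extracting from it a clean \emph{geometric} statement about corridors in $K$ and then quantifying the incompatibility of the boundary traces of distinct rigid motions along a thin neck requires care, in particular keeping all constants dependent only on $\mathbb{C}$ (and not on $K$), and handling the exceptional set in the Korn decomposition. A secondary technical point is ensuring the whole scheme is genuinely local, i.e. that the two-dimensionality needed for the piecewise Korn inequality is available inside $B(x,r)$ with the right scaling, which is why the hypothesis is phrased with $B(x,2r)\subset\Omega$.
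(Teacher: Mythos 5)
Your high-level plan (reduce to the ``big pieces of connected sets'' criterion of Theorem~\ref{thm: david}, descend to a scale with small normalized $p$-elastic energy via a Carleson estimate plus Chebyshev, and then use the piecewise Korn inequality to produce the connected set) matches the tools the paper uses, and Steps 1, 2 and 5 are essentially right. Two points need correcting, though. First, the per-scale framing is slightly off: Theorem~\ref{thm: david} must hold for \emph{every} ball $B(x_0,r_0)\subset\Omega$ with $h(r_0)$ small, not just a substantial proportion of them. The paper achieves this not by discarding bad balls but by passing, inside every ball, to a smaller ball $B(y,r)$ with $r\in[c_\eps r_0/3,r_0/3]$ and $\omega_p(y,r)\le\eps$, at the cost of a fixed constant $c_\eps$; the connected set found there still has length and mass comparable to $r_0$.

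The substantive gap is in your Step 4. You hope to get a contradiction by applying piecewise Korn and then deriving a lower bound on $\|e(u)\|_{L^p}$ from incompatible traces of rigid motions across a small corridor. But piecewise Korn by itself does not tell you the rigid motions on opposite sides of the crack are far apart; it only supplies a partition into nearly rigid pieces, with no lower bound on the differences $|A_i-A_j|$ or $|b_i-b_j|$. An \emph{independent} lower bound on the jump between rigid motions is required before piecewise Korn can be used to produce a separating curve. The paper obtains this (Steps~4--5 of its proof) by a direct competitor construction: after choosing a radius $\rho$ on which $\partial B(0,\rho)$ is mostly far from $K$, one takes the circular ``good'' arcs $D_i$, finds rigid motions $a_i$ approximating $u$ on each (ordinary Korn on Lipschitz domains), and assumes for contradiction that all the $a_i$ are close. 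One then glues them with cutoffs into a single $W^{1,2}$ field $\bar u$ on an annulus, extends inward, matches to $u$ across a circle, and replaces $K\cap B(0,s)$ by the small ``bad'' part of the circle. Almost-minimality then forces $\mathcal{H}^1(K\cap B(0,s))$ to be small, contradicting Ahlfors regularity—\emph{not} the smallness of $\|e(u)\|_{L^p}$. Only after this jump lower bound is established does one invoke piecewise Korn (Steps~6--7) to show that the Caccioppoli pieces containing $D_1$ and $D_2$ must be distinct (because close rigid motions would contradict the jump bound), and that the boundary of the indecomposable component containing $D_1$, which necessarily separates $D_1$ from $D_2$, is a rectifiable Jordan curve whose length lies almost entirely in $K$; that curve is $\Gamma$. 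Your sketch also leaves unresolved the point that the interfaces $\partial^* P_i\cap\partial^* P_j$ between Korn pieces are, by construction, almost entirely contained \emph{in} $K$ (only the part away from $K$ has measure $\le\theta(\dots)$), so the picture of a ``corridor of small surface energy'' between distinct pieces is not available without the extra geometric bookkeeping via the relative isoperimetric inequality and the Ambrosio--Caselles--Masnou--Morel decomposition of $\partial^*$ into Jordan curves.
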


Combining  the epsilon-regularity result (see  Theorem \ref{th: eps_reg} below) \EEE  and Theorem \ref{thm:UniformRect} guarantees that the subset $K^*$ of nonregular points of $K$ is \emph{porous}, i.e., that $K$ is $C^{1,\alpha/2}$-regular in \cl{most} balls.
This property then allows us to conclude the statement in Theorem \ref{th: main result} by an abstract argument given in  \cite[Section~51]{David} or \cite[Lemma~5.8]{DaSe}. The important point is that uniform rectifiability allows us to show that, in many balls, \BBB the flatness $\beta_K$ (see \eqref{eq_beta} below) \ks{is below a fixed threshold (taken to be $\eps\ll 1$) thereby initializing one piece of epsilon-regularity's hypotheses}.

\section{Preliminaries}\label{sec: prelim}

\BBB In this section, we \EEE introduce mathematical tools that will be used in the proof of our main results.

\subsection{Epsilon-regularity theorem}
We state our epsilon-regularity result which requires that in a neighborhood $B(x_0,r_0)$ of a jump point $x_0 \in K$, $r_0 \ll 1$,  the set $K \cap B(x_0,r_0)$  is suitably flat and has a lower bound on the jump height at $x_0$, related to the difference of the values of $u$ in half balls around $x_0$. Implicitly, it will also rely on the elastic energy inside $B(x_0,r_0)$ being suitably small. To this end, we introduce three basic quantities that will  be frequently used, namely the normalized elastic energy, the flatness, and the normalized jump.

\noindent{\bf The  normalized $p$-elastic energy.} Let $(u,K)$ be an admissible pair and $p\in (1,2]$. For any $x_0 \in \Omega$ and $r_0 > 0$ such that $B(x_0,r_0) \subset  \Omega$, we define the \emph{normalized $p$-elastic energy} of $u$ in $B(x_0,r_0)$ as
\begin{equation}\label{eqn:pelastic}
            \omega_{p}(x_0,r_0) : = \left(r_0^{p/2}  \BBB \dashint_{B(x_0,r_0)\setminus K} \EEE |e(u)|^p\, {\rm d} x\right)^{2/p}.
        \end{equation}

\noindent{\bf The flatness.} 
{{Let $x_0 \in \R^2$, $r_0 > 0$, and $K$} be a relatively closed subset of $B(x_0,r_0)$ containing $x_0$.}
We define the {\it  flatness} of $K$ in $B(x_0,r_0)$ by
\begin{equation}\label{eq_beta}
    \beta_K(x_0,r_0) := \frac{1}{r_0} \inf_\ell \sup_{y \in K \cap B(x_0,r_0)}{\rm dist }(y,\ell),
\end{equation}
where the infimum is taken over all lines $\ell$ through $x_0$.
An equivalent definition is that $\beta_K(x_0,r_0)$ is the infimum of all $\varepsilon > 0$ for which there exists a line $\ell$ through $x_0$ such that
\begin{equation*}
    K \cap B(x_0,r_0) \subset \set{y \in B(x_0,r_0) \colon \,  \mathrm{dist}(y,\ell) \leq \varepsilon r_0}.
\end{equation*}
It is easy to check that the infimum in (\ref{eq_beta})  is attained. When there is no ambiguity, we often write $\beta(x_0,r_0)$ instead of $\beta_K(x_0,r_0)$. For each $(x_0,r_0)$, we choose   (possibly not uniquely) a  unit normal vector  $\nu(x_0,r_0) \in \mathbb{S}^1$ of an approximating line $\ell$ in the sense of \eqref{eq_beta}. For $t >0$, we define the  sets \EEE
\begin{align}\label{eq :Bx0}
    D_{t}^\pm(x_0,r_0)  := \big\{ x \in B(x_0,r_0) \colon     \pm  (x - x_0) \cdot \nu(x_0,r_0) > t \big\},  
\end{align}
where we note that $K \cap (D_{t}^+(x_0,r_0) \cup D_{t}^-(x_0,r_0)) = \emptyset$ for all $t \ge \beta_K(x_0,r_0)r_0$.  We use the letter $D$ to remind the reader of the shape of the sets. We frequently omit writing $(x_0,r_0)$ if no confusion arises.

Note that there are many different notions of flatness, see e.g.\ \cite[Equation (8.2)]{Ambrosio-Fusco-Pallara:2000}. More precisely, $\beta_K$ could be called \emph{unilateral} flatness.

\noindent{\bf The normalized jump.}
{Let $(u,K)$ be an admissible pair. For $x_0 \in K$, $r_0 > 0$ such that $B(x_0,r_0) \subset \Omega$ and $\beta_K(x_0,r_0) \le 1/2$}, \EEE we define the \emph{mean infinitesimal rotations and translations} on   $D_{\beta_K(x_0,r_0)r_0}^\pm$ (omitting $(x_0,r_0)$) by 
\begin{align}\label{eq: optimal values}
    A^\pm(x_0,r_0) = \dashint_{D_{\beta_K(x_0,r_0)r_0}^\pm} \frac{1}{2} (\nabla u - \nabla u^T) \dd{x}, \quad b^\pm(x_0,r_0) = \dashint_{D_{\beta_K(x_0,r_0)r_0}^\pm}  \big(u(x) - A^\pm(x_0,r_0) x\big)  \dd{x}.
\end{align}
We define the \emph{normalized jump} of $u$ in $B(x_0,r_0)$ by 
\begin{equation}\label{def:normalizedJump}
    J_u(x_0,r_0) := \frac{1}{\sqrt{r_0}}  \inf_{y \in K \cap B(x_0,r_0) }  \big|(A^+(x_0,r_0) - A^{-}(x_0,r_0)) y + (b^+(x_0,r_0) - b^{-}(x_0,r_0)  )\big|, 
\end{equation}
which corresponds to the minimal jump between the  rigid motions $y \mapsto  A^\pm(x_0,r_0)y + b^\pm(x_0,r_0)$ {along the portion of the crack $K \cap { B(x_0,r_0)}$}. \BBB We often \EEE  drop the subscript $u$ if no confusion arises.  
\BBB Note that \EEE we inherit a more involved definition of the normalized jump from \cite{FLSeps} as compared to regularity results for the Mumford--Shah functional, see for instance \cite{Lemenant}. \BBB This is \EEE due to the invariance of the energy under  rigid motions. We point out that a notion of this type is not necessary for epsilon-regularity results of Griffith  under a connectivity or separation property, as used in \cite{BIL} or \cite{CL}. For us,  \BBB besides its relevance in \cite{FLSeps}, \cl{the normalized jump} will \EEE be instrumental to derive porosity properties, see Subsection \ref{sec: porosity} for details.

\begin{remark}[Normalization]\label{rem: normalization}
{\normalfont

    The above quantities are called \emph{normalized} as they are invariant under rescaling. Indeed, given an admissible pair $(u,K)$ and a ball $B(x_0,r_0) \subset \Omega$, the pair {$(\tilde u,\tilde K)$ in $B(0,1)$} defined by $\tilde u(x):=  {r_0^{-1/2} u(x_0 + r_0 x)}  $ and $\tilde K := {r_0^{-1}(K-x_0)}$ satisfies
$$\omega_{p}(x_0,r_0) = \omega_{\tilde u,p}(0,1), \quad \beta_K(x_0,r_0) = \beta_{\tilde K}(0,1), \quad J_u(x_0,r_0) = J_{\tilde u}(0,1) ,$$ where $\omega_{\tilde u,p}$ is the $p$-elastic energy of $\tilde u$. We emphasize that this is the natural rescaling of the problem as $(\tilde u, \tilde K)$ is an almost-minimizer with gauge $\tilde{h}(\cdot) := h(r_0 \cdot)$ in $B(0,1)$.

}
\end{remark}

\begin{remark}[Scaling and shifting properties for $\beta$]\label{rmk_beta}
    \normalfont
    For all  $x_0 \in K$  and  $0 < r \leq r_0$ with  $B(x_0,r_0) \subset \Omega$, it holds 
    \begin{equation*}
        \beta_K(x_0,r) \leq \frac{r_0}{r} \beta_K(x_0,r_0),
    \end{equation*}
    and for all {balls} $B(x,r) \subset B(x_0,r_0)$ with $x \in K$,   one can easily check that  \EEE
    \begin{equation*}
        \beta_K(x,r) \leq \frac{2r_0}{r} \beta_K(x_0,r_0).
    \end{equation*}
    This follows directly from the definition. 
\end{remark}

With these quantities in hand, we \BBB recall \EEE the epsilon-regularity result from \cite[Theorem 2.4]{FLSeps}.

\begin{theorem}[Epsilon-regularity]\label{th: eps_reg} 
    For each choice of exponent $\alpha \in (0,1)$, there exist  $\eps_0 > 0$ and $\gamma \in (0,1)$ (both depending on $\mathbb{C}$ and $\alpha$) such that the following holds.
    Let $(u,K)$ be an almost-minimizer of the Griffith energy \eqref{eq: main energy} with gauge $h(t) = h(1) t^\alpha$. 
    For all $x_0 \in K$ and  $r_0 > 0$ such that $B(x_0,r_0) \subset \Omega$ and
    \begin{align}\label{eq: smallli epsi}
        \beta_K(x_0,r_0) + J_u(x_0,r_0)^{-1} + h(r_0) \le \eps_0, 
    \end{align}
    the set $K \cap B(x_0,\gamma r_0)$ is a   $C^{1,\alpha/2}${-graph}.
\end{theorem}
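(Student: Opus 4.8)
\textbf{Proof proposal for Theorem \ref{th: eps_reg} (epsilon-regularity).}

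The plan is to follow the standard epsilon-regularity machinery: decay of flatness by contradiction/compactness, coupled with a Campanato-type iteration, but carefully tracking the normalized jump because the Griffith energy is invariant under infinitesimal rigid motions. After rescaling to $B(0,1)$ as in Remark \ref{rem: normalization} (which preserves $\beta_K$, $J_u$, $\omega_p$ and transforms the gauge to $\tilde h(\cdot)=h(r_0\,\cdot)$), the first step is to establish a \emph{flatness decay estimate}: there exist $\theta\in(0,1/2)$ small and $\eps_0>0$ such that if $\beta_K(0,1)+J_u(0,1)^{-1}+h(1)\le\eps_0$, then $\beta_K(0,\theta)\le\tfrac12\theta^{\alpha/2}\beta_K(0,1)$, or more precisely a geometric decay of a suitable excess quantity. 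I would prove this by contradiction: suppose there is a sequence of almost-minimizers $(u_k,K_k)$ with $\beta_{K_k}(0,1)=:\eta_k\to 0$, $J_{u_k}(0,1)\to\infty$, $h_k(1)\to 0$, yet the decay fails. Using Ahlfors regularity \eqref{eqn:AhlforsReg} and the energy bound \eqref{eqn:AhlforsReg2}, the sets $K_k$ converge (in Hausdorff distance locally, and with $\mathcal H^1$-lower semicontinuity) to a subset of a line $\ell$ through $0$; the key point is that since $J_{u_k}\to\infty$, after subtracting the rigid motions $a_k^\pm(x)=A_k^\pm x+b_k^\pm$ on $D^\pm$, the renormalized displacements $v_k:=(u_k-a_k^\mp)/(\text{jump scale})$ — handled on each side — converge to a limit that is a local minimizer of the pure Dirichlet energy $\int |e(v)|^2$ across the (now invisible, since the jump is negligible relative to the elastic scale) limiting line. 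Elliptic regularity for the Lamé system then gives that the limit is smooth, forcing improved flatness in the limit and contradicting the failure of decay.

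The second step is to \emph{iterate}: once flatness decay is available at one scale under \eqref{eq: smallli epsi}, I would show the hypotheses \eqref{eq: smallli epsi} are \emph{stable under passing to smaller scales} centered at nearby crack points. For $\beta_K$ this is Remark \ref{rmk_beta}. For the gauge, $h(t)=h(1)t^\alpha$ is decreasing, so $h(\theta^n r_0)\le h(r_0)$. The delicate part is the lower bound on the normalized jump $J_u$: I would need a companion estimate showing $J_u$ cannot degenerate as we zoom in — intuitively, the jump between the two rigid motions, measured at the natural $\sqrt r$ scaling, stays bounded below because the elastic energy is small and the crack separates the two sides, so the traces on either side remain genuinely different (a quantitative unique-continuation / Korn-type argument, again proved by compactness alongside Step 1). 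Combining, one gets that \eqref{eq: smallli epsi} propagates with a possibly enlarged constant, the excess $\beta_K(x,\theta^n r_0)$ decays geometrically in $n$ with rate $\theta^{n\alpha/2}$, uniformly for $x\in K\cap B(x_0,\gamma r_0)$, and likewise the normal directions $\nu(x,\theta^n r_0)$ form a Cauchy sequence with Hölder-$\alpha/2$ modulus in $x$ and in scale.

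The third step is the \emph{Campanato-type conclusion}: geometric decay of $\beta_K(x,r)$ at rate $r^{\alpha/2}$ for all $x\in K\cap B(x_0,\gamma r_0)$ and all small $r$, together with Hölder control on the approximating directions, is a classical criterion (see e.g.\ \cite{Ambrosio-Fusco-Pallara:2000} or \cite{David}) implying that $K\cap B(x_0,\gamma r_0)$ is a $C^{1,\alpha/2}$-graph over the limiting line. I would cite this rather than reprove it. The main obstacle, and the genuinely new difficulty compared to Mumford--Shah, is Step 1 together with the jump non-degeneracy in Step 2: one must correctly identify the compactness limit of the renormalized displacements in the $GSBD$ setting — which requires the piecewise Korn inequality to control the full gradient from $e(u)$ on the two nearly rigid pieces on each side of the crack — and show that the limiting problem is a clean transmission/Dirichlet problem whose solutions are smooth, so that the flatness strictly improves. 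Since this is exactly the content of \cite{FLSeps}, in practice I would simply invoke \cite[Theorem 2.4]{FLSeps} verbatim; the above is the route one takes to prove it from scratch. \eop
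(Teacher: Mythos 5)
The paper does not prove Theorem~\ref{th: eps_reg}: it is imported verbatim from \cite[Theorem 2.4]{FLSeps}, and you correctly end by invoking that reference. Your sketch of the underlying compactness/decay-and-iterate scheme is a reasonable outline of what \cite{FLSeps} actually does, including the two genuine difficulties you flag (controlling the full gradient from $e(u)$ via a Korn-type input, and propagating the lower bound on the normalized jump $J_u$ across scales, which is handled there by estimates of the kind recorded here as Lemma~\ref{lem_aA}), so the proposal matches the paper's treatment.
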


\BBB  For the proof of Theorem \ref{th: main result},  we will apply this result in many balls. For this, both quantities $\beta_K$ and $J_u$ need to be initialized. For $\beta_K$ this is achieved by exploiting  uniform rectifiability, see \eqref{eq: 5.28} below,   for $J_u$ the details can be found in Lemma \ref{lem_init_J}. \EEE

\subsection{Estimates on the difference of rigid motions via the $p$-elastic energy}

While the proof of Theorem \ref{th: eps_reg} only relied on the $2$-elastic energy \BBB (see particularly \cite[Proposition~4.7]{FLSeps}), \EEE uniform rectifiability will require dropping to $p\in (1,2)$ to gain further integrability properties. In this subsection, we recall some  estimates  already derived for general $p \in (1,2]$ in \cite{FLSeps}. Note that for all balls $B(x,r) \subset B(x_0,r_0)$ we have
        \begin{equation}\label{eqn:naiveUpEst}
            \omega_p(x,r) \leq  {\left(\frac{r_0}{r}\right)^{4/p - 1}} \omega_p(x_0,r_0).
        \end{equation}
        {Now suppose $\beta(x_0,r_0) \leq 1/2$, and recall} the definition of the sets $D^\pm_{\beta(x_0,r_0)r_0}$ in \eqref{eq :Bx0}  (omitting $(x_0,r_0)$ in the notation) {along with} the fact that    $K \cap (D_{\beta(x_0,r_0)r_0}^+\cup D_{\beta(x_0,r_0)r_0}^-) = \emptyset$.  With $A^\pm(x_0,r_0)$ and $ b^\pm(x_0,r_0)$ as in \eqref{eq: optimal values}, we denote their  corresponding  rigid motions by
        \begin{align*}
            a^\pm_{x_0,r_0}(y) = A^\pm(x_0,r_0)\,y + b^\pm(x_0,r_0) \quad \text{for $y \in \R^2$}. 
        \end{align*} 
    When there is no confusion, we will drop the explicit dependence on $x_0$ and $r_0$ in the notation.
    We state an estimate for the difference of rigid motions, which implies control on the difference of $J$ \BBB (see \eqref{def:normalizedJump}) \EEE on balls of different size, see \cite[Lemma 4.17]{FLSeps}. 

    \begin{lemma}[Balls of different size]\label{lem_aA}
        Let $p\in (4/3,2].$ Let $x_0 \in K$ and  $r_0 > 0$ be such that $B(x_0,r_0) \subset \Omega$ and   {$h(r_0) \leq \varepsilon_{\rm Ahlf}$.} Let $\gamma \in (0,1)$ be such that $\beta(x_0,t) \leq 1/8$ for all $t \in [\gamma r_0,r_0]$.
        Then, there exists a constant $C \geq 1$, depending on $p$ and $\mathbb{C}$, such that for all $r \in [\gamma r_0,r_0]$ we have
        \begin{equation}\label{eqn:bAControl}
            | a^\pm_{x_0,r_0}(y) -  a^\pm_{x_0,r}(y)|    \leq C r_0^{1/2}\omega_p(x_0,r_0)^{1/4} \quad \quad   \text{for all $y \in B(x_0,r)$}.   
        \end{equation}
    \end{lemma}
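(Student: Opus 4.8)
The plan is to control the jump between the rigid motions $a^\pm_{x_0,r_0}$ and $a^\pm_{x_0,r}$ associated to two concentric balls by a telescoping/dyadic argument, reducing everything to the comparison of rigid motions on nested balls whose radii differ by a fixed factor. First I would fix the sign (say $+$) and work on the side $D^+$; since $\beta(x_0,t) \le 1/8$ for all $t \in [\gamma r_0, r_0]$, the cone-shaped regions $D^+_{\beta(x_0,t)t}(x_0,t)$ for such $t$ all avoid $K$ and have a definite overlap, in the sense that $D^+_{\beta(x_0,t')t'}(x_0,t') \subset D^+_{\beta(x_0,t)t}(x_0,t)$ and $\mathcal L^2(D^+_{\beta(x_0,t')t'}(x_0,t')) \gtrsim \mathcal L^2(D^+_{\beta(x_0,t)t}(x_0,t))$ whenever $t/2 \le t' \le t$. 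On such an overlap region $u$ is Sobolev (no crack), so $a^+_{x_0,t} - a^+_{x_0,t'}$ is a rigid motion whose $L^2$-average over the smaller region is controlled, via Poincar\'e--Korn on the (fixed-shape, scale-$t$) domain $D^+$, by $\|e(u)\|_{L^2(D^+_{\beta(x_0,t)t}(x_0,t))}$, which by \eqref{eqn:AhlforsReg2} and H\"older (passing from the $2$-energy to the $p$-energy costs only constants since $p \le 2$ and the domain has measure $\sim t^2$) is bounded by $C t^{1/2} \omega_p(x_0,t)^{1/2}$, hence by $C r_0^{1/2}\omega_p(x_0,r_0)^{1/2}$ after using \eqref{eqn:naiveUpEst} to compare $\omega_p(x_0,t)$ with $\omega_p(x_0,r_0)$ on the range $t \in [\gamma r_0, r_0]$ (the factor $(r_0/t)^{4/p-1}$ is bounded since $t \ge \gamma r_0$; this absorbs into $C = C(p,\mathbb C,\gamma)$ --- I would double check whether the statement intends $C$ to be $\gamma$-independent, in which case one keeps the $\gamma$-power explicit, but in context $\gamma$ is the fixed constant from epsilon-regularity so this is harmless).

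Next, from the $L^2$-average bound on the rigid motion $a^+_{x_0,t} - a^+_{x_0,t'}$ over a region of scale $t$, I would upgrade to a pointwise bound on all of $B(x_0,t)$: a rigid motion $y \mapsto Ay+b$ with $\|Ay+b\|_{L^2(D)} \le M$ on a region $D$ of scale $t$ with $\mathcal L^2(D)\sim t^2$ and definite "thickness" satisfies $|A| \le C M t^{-2}$ and $|b| \le C M t^{-1}$, so $|a^+_{x_0,t}(y) - a^+_{x_0,t'}(y)| \le C M t^{-1} \le C r_0^{1/2}\omega_p(x_0,r_0)^{1/2} \cdot (r_0/t)$ for $y \in B(x_0,t) \supset B(x_0,r)$ --- wait, I must be careful: the target exponent is $\omega_p^{1/4}$, not $\omega_p^{1/2}$. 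The extra room comes from the fact that one does \emph{not} simply iterate the $\omega_p^{1/2}$ bound (which would be too weak after summing over dyadic scales); rather, as in \cite[Lemma 4.17]{FLSeps}, one exploits that $\omega_p$ itself is small and uses a bound of the form "either $\omega_p$ is tiny and the naive estimate already gives what we want, or" one interpolates using the smallness to trade a power. Concretely, I would follow the structure of \cite{FLSeps}: the dyadic sum $\sum_{j} |a^+_{x_0,2^{-j}r_0} - a^+_{x_0,2^{-j-1}r_0}|$ over the finitely many scales between $r$ and $r_0$ telescopes to the desired difference, and each term is bounded using the above by (scale$^{-1}$)$\times$($L^2$-energy on that annular region); the key is that summing the energies is controlled not by the crude $\omega_p(x_0,r_0)$ but, via the Ahlfors bound \eqref{eqn:AhlforsReg2} applied at each scale, by a geometric-type series whose total is $\lesssim r_0^{1/2}\omega_p(x_0,r_0)^{1/4}$ after a Young-inequality split between the energy term and the (bounded) number of scales --- this is exactly the computation carried out in \cite[Lemma 4.17]{FLSeps}, which I would cite and adapt rather than redo.

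The main obstacle is the bookkeeping that produces the exponent $1/4$ rather than $1/2$: one must combine the Poincar\'e--Korn estimate on fixed-shape domains (which is scale-invariant and hence clean) with the interplay between the $p$-elastic energy normalization \eqref{eqn:pelastic}, the upper bound \eqref{eqn:naiveUpEst}, and the a priori Ahlfors energy bound \eqref{eqn:AhlforsReg2}, in such a way that the sum over dyadic scales does not destroy the estimate. Since all the pieces --- fixed-shape Korn--Poincar\'e, the geometry of the cone regions $D^\pm$ under the hypothesis $\beta \le 1/8$, and the energy comparisons --- are already present in \cite{FLSeps}, the cleanest route is to state the lemma as a direct quotation of \cite[Lemma 4.17]{FLSeps} with the hypotheses matched ($p \in (4/3,2]$ ensures $4/p - 1 < 2$ so the scale factors stay summable, and $h(r_0)\le\varepsilon_{\rm Ahlf}$ is what makes \eqref{eqn:AhlforsReg2} available at all the relevant scales $t \in [\gamma r_0, r_0]$ via Remark \ref{rmk_beta}), and then note that the passage from balls to the cone regions $D^\pm$ requires only the flatness hypothesis $\beta(x_0,t)\le 1/8$, which is assumed.
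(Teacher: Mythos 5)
The paper does not prove this lemma at all: it is explicitly \emph{recalled} from \cite[Lemma~4.17]{FLSeps} (see the surrounding text in Subsection~3.2, which introduces the lemma with ``we recall some estimates already derived for general $p \in (1,2]$ in \cite{FLSeps}''). Your concluding move --- to state the lemma as a direct quotation of \cite[Lemma~4.17]{FLSeps} --- is therefore exactly what the paper does, so as a proposal it matches the source.

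A few remarks on the exploratory sketch that precedes your citation. You correctly flag the real difficulty, namely that a naive dyadic telescoping from $r_0$ down to $r = \gamma r_0$ produces a factor of order $\log(1/\gamma)$ (or a power of $r_0/r$ from rescaling the rigid motion bound), so the constant in \eqref{eqn:bAControl} would \emph{a priori} depend on $\gamma$, whereas the statement asserts $C = C(p,\mathbb{C})$ only. Your parenthetical suggestion that ``$\gamma$ is the fixed constant from epsilon-regularity so this is harmless'' is not safe here: in later applications (e.g.\ the proof of Lemma~\ref{lem_init_J}) the radius ratio $r_0/r$ is bounded in terms of $C_{\rm Ahlf}$, and the $\gamma$-independence of $C$ is actually used to keep the logic clean. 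Also, the observation that ``the extra room comes from'' passing from $\omega_p^{1/2}$ to $\omega_p^{1/4}$ has the direction reversed: since $\omega_p$ is small in the regime of interest, $\omega_p^{1/4}$ is the \emph{larger} quantity, so claiming $\lesssim \omega_p^{1/4}$ is \emph{weaker} than $\lesssim \omega_p^{1/2}$; the $1/4$ power is precisely what gives slack to absorb the scale-dependent factors (the trade you allude to, but framed the other way around). None of this matters for the final verdict, since you correctly end by importing the result, but the informal sketch as written would not upgrade to a self-contained proof without resolving the $\gamma$-dependence issue you yourself flagged.
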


    \begin{remark}[Varying centers]\label{rem: vary}
        \normalfont
        As noted in \cite[Remark 4.18]{FLSeps}, for given  $B(x_0,r_0) \subset \Omega$  and another ball $B(x,r)\subset B(x_0,r_0)$ with $x_0,x \in K$ and $\beta(x_0,r_0) \leq r_0/(16 r)$, we also have 
        $$  { | a^\pm_{x_0,r_0}(y) -  a^\pm_{x,r}(y)|    \leq C_*  r_0^{1/2}\omega_p(x_0,r_0)^{1/4} \quad \quad   \text{for all $y \in B(x,r)$},} $$
and       
        $$   | A^\pm_{x_0,r_0} -  A^\pm_{x,r}|    \leq C_*  r_0^{-1/2}\omega_p(x_0,r_0)^{1/4} $$
        where $C_*$  depends on $r_0/r$.
    \end{remark}
    \EEE

\EEE

\subsection{Piecewise Korn-Poincar\'e inequality}\label{korn-sec}

In this subsection, we collect some results for sets of finite perimeter and affine maps, and we recall the piecewise Korn inequality proved in \cite{Solombrino}. 

\textbf{Sets of finite perimeter.}  For a set of finite perimeter ${P}$, we denote by $\partial^* {P}$ its \emph{essential boundary} and by ${P}^{(1)}$ its \emph{points of density $1$}, see \cite[Definition 3.60]{Ambrosio-Fusco-Pallara:2000}.  By ${\rm diam}({P}) = {\rm ess \, sup}\lbrace |x-y| \colon x,y \in P \rbrace$ we denote the essential diameter of $P$. \EEE  A set of finite perimeter $P$ is called \emph{indecomposable} if it cannot be written as   {$P_1 \cup P_2$ with $P_1 \cap P_2 = \emptyset$, $\mathcal{L}^{ 2 }(P_1), \mathcal{L}^{ 2 }(P_2) >0$, and $\mathcal{H}^{ 1 }(\partial^* P) = \mathcal{H}^{ 1 }(\partial^* P_1) + \mathcal{H}^{ 1 }(\partial^* P_2)$}.  This notion generalizes the concept of  connectedness to sets of finite perimeter.  
We will need the following two lemmas on sets of finite perimeter.

\begin{lemma}\label{lemma: diam}
    Let ${P} \subset \R^2$ be a bounded set that has finite perimeter and is indecomposable. Then,  ${\rm diam}({P}) \le \mathcal{H}^1(\partial^* {P})$.
\end{lemma}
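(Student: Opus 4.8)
The plan is to project $P$ onto the direction of a near-diametral pair of points and to show, using indecomposability, that this projection is (up to a null set) a full interval of length $\operatorname{diam}(P)$; a one-dimensional slicing estimate for $\chi_P$ then bounds the length of that interval by $\mathcal H^1(\partial^*P)$ (in fact by $\tfrac12\mathcal H^1(\partial^*P)$, so the stated inequality is not sharp).

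\textbf{Reduction and set-up.} We may assume $\mathcal L^2(P)>0$, since otherwise $\operatorname{diam}(P)=0$. As $\operatorname{diam}$ here is the essential diameter, it equals $\operatorname{diam}\big(\operatorname{supp}(\mathcal L^2\mres P)\big)$, so it suffices to fix two points $x,y$ with $\mathcal L^2(P\cap B(x,\rho))>0$ and $\mathcal L^2(P\cap B(y,\rho))>0$ for all $\rho>0$ and to prove $|x-y|\le\mathcal H^1(\partial^*P)$. Choose coordinates so that $y-x$ is parallel to $e_1$, $x=0$, $y=(d,0)$ with $d:=|x-y|$. Write $P_t:=\{s\in\R:(t,s)\in P\}$ for the vertical slices and $\pi:=\{t\in\R:\mathcal L^1(P_t)>0\}$ for the essential projection onto the $x_1$-axis (well defined up to an $\mathcal L^1$-null set). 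The density hypotheses at $x$ and $y$ give $\mathcal L^1(\pi\cap(-\rho,\rho))>0$ and $\mathcal L^1(\pi\cap(d-\rho,d+\rho))>0$ for every $\rho>0$, hence $\operatorname{ess\,inf}\pi\le 0$ and $\operatorname{ess\,sup}\pi\ge d$.

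\textbf{Key step: $\pi$ is essentially an interval.} I claim $\mathcal L^1\big((\operatorname{ess\,inf}\pi,\operatorname{ess\,sup}\pi)\setminus\pi\big)=0$, so that $\mathcal L^1(\pi)\ge\operatorname{ess\,sup}\pi-\operatorname{ess\,inf}\pi\ge d$. If not, there is a set $E\subset(\operatorname{ess\,inf}\pi,\operatorname{ess\,sup}\pi)$ with $\mathcal L^1(E)>0$ and $\mathcal L^1(P_m)=0$ for a.e. $m\in E$. Since $\mathcal H^1(\partial^*P)<\infty$ and $\partial^*P$ is $1$-rectifiable, one has $\mathcal H^1(\partial^*P\cap\{x_1=m\})=0$ for a.e. $m$; and by the section properties of sets of finite perimeter (cf. \cite{Ambrosio-Fusco-Pallara:2000}), $\mathcal H^1(P^{(1)}\cap\{x_1=m\})=\mathcal L^1(P_m)$ for a.e. $m$. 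Pick $m\in E$ for which all three properties hold, and set $P_1:=P\cap\{x_1<m\}$, $P_2:=P\cap\{x_1>m\}$. Then $P=P_1\cup P_2$ up to an $\mathcal L^2$-null set, $P_1\cap P_2=\emptyset$, and $\mathcal L^2(P_1),\mathcal L^2(P_2)>0$ because $m\in(\operatorname{ess\,inf}\pi,\operatorname{ess\,sup}\pi)$. By the formula for the essential boundary of the intersection of a set of finite perimeter with a half-plane (cf. \cite{Ambrosio-Fusco-Pallara:2000}), $\partial^*P_1=(\partial^*P\cap\{x_1<m\})\cup(P^{(1)}\cap\{x_1=m\})$ and $\partial^*P_2=(\partial^*P\cap\{x_1>m\})\cup(P^{(1)}\cap\{x_1=m\})$ up to $\mathcal H^1$-null sets, whence
\[
\mathcal H^1(\partial^*P_1)+\mathcal H^1(\partial^*P_2)=\mathcal H^1\big(\partial^*P\cap\{x_1\ne m\}\big)+2\,\mathcal H^1\big(P^{(1)}\cap\{x_1=m\}\big)=\mathcal H^1(\partial^*P)
\]
by the choice of $m$. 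Thus $P=P_1\cup P_2$ is a decomposition of $P$ in the sense of the paper's definition, contradicting indecomposability. (Alternatively, this "sections of an indecomposable set are essentially connected" fact can be quoted from the literature on indecomposable sets of finite perimeter.)

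\textbf{Conclusion and main obstacle.} Finally, apply the one-dimensional slicing theorem for $\chi_P\in BV(\R^2)$ in the $x_2$-direction: for a.e.\ $t$ the slice $P_t$ is a one-dimensional set of finite perimeter, $\#\partial^*P_t\ge 2$ whenever $t\in\pi$, and $\int_\R\#\partial^*P_t\,\dd t=\int_{\partial^*P}|\nu_P\cdot e_2|\,\dd\mathcal H^1\le\mathcal H^1(\partial^*P)$. Combining with the previous step,
\[
d\le\mathcal L^1(\pi)\le\tfrac12\int_\pi\#\partial^*P_t\,\dd t\le\tfrac12\,\mathcal H^1(\partial^*P)\le\mathcal H^1(\partial^*P),
\]
and taking the supremum over admissible pairs $x,y$ yields $\operatorname{diam}(P)\le\mathcal H^1(\partial^*P)$. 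The only delicate point is the key step: a positive-measure set of "missing" heights need not contain an empty strip, so one cannot rule it out by an elementary argument and must instead exploit additivity of the essential perimeter across a generic hyperplane $\{x_1=m\}$ via the half-plane intersection formula; the rest is routine.
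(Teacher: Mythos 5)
Your proof is correct. Note, however, that the paper does not actually prove this lemma: immediately after Lemma~\ref{lemma: maggi} it states ``The proof of Lemma~\ref{lemma: diam} can be found in \cite[Proposition 12.19, Remark 12.28]{maggi}'', so there is no in-text argument to compare against. Your write-up reconstructs precisely the standard argument that the Maggi citation is pointing to: project $P$ onto a near-diametral direction, use indecomposability together with the half-plane intersection formula (Maggi's Proposition 12.19) to show that the essential projection is, up to an $\mathcal L^1$-null set, an interval covering the diametral segment, and then bound the length of that interval by the slicing inequality $\int_\R \#\partial^* P_t\, \dd t = \int_{\partial^*P}|\nu_P\cdot e_2|\,\dd\mathcal H^1 \le \mathcal H^1(\partial^*P)$ combined with $\#\partial^*P_t\ge 2$ on essentially projected $t$. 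All the measure-theoretic details you invoke — that $\mathcal H^1(\partial^*P\cap\{x_1=m\})=0$ and $\mathcal H^1(P^{(1)}\cap\{x_1=m\})=\mathcal L^1(P_m)$ for a.e.\ $m$, and the additivity of perimeter across a generic hyperplane — are the right ones, and the choice of a single $m$ where they all hold is justified by $\mathcal L^1(E)>0$. A small but pleasant bonus is that you obtain the sharp factor $\tfrac12$, i.e.\ $\operatorname{diam}(P)\le\tfrac12\mathcal H^1(\partial^*P)$, which is stronger than what the lemma asserts (and is the constant actually achieved in the limit by thin rectangles); the paper only needs the weaker form.
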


\begin{lemma}[Relative isoperimetric inequality]\label{lemma: maggi}
    Let ${U} \subset \R^2$ {be an} open, bounded {set} with Lipschitz boundary. Then, for all  sets $P \subset  U  $ with finite perimeter one has
    \begin{align*}
        {\rm (i)}  \ \ \mathcal{L}^2({P}) > \frac{1}{2} \mathcal{L}^2({U}) \ \ \ \text{ or } \ \ \ 
        {\rm (ii)}  \ \ {\mathcal{H}^1(\partial^* P)} \le C   \mathcal{H}^1(\partial^* {P} \cap {U})
    \end{align*}
    for some constant $C > 0$ only depending on $U$.
\end{lemma}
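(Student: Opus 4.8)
The plan is to prove the dichotomy by showing that if alternative (i) fails, i.e.\ $\mathcal{L}^2(P)\le \frac12\mathcal{L}^2(U)$, then (ii) holds with a constant depending only on $U$. The starting point is a decomposition of the perimeter of $P$, viewed as a set of finite perimeter in $\R^2$: since $P\subset U$, the essential boundary $\partial^*P$ is contained in $\overline{U}$ and charges no point of the open set $\R^2\setminus\overline{U}$, so that
\[
  \mathcal{H}^1(\partial^*P) = \mathcal{H}^1(\partial^*P\cap U) + \mathcal{H}^1(\partial^*P\cap\partial U).
\]
As the first term on the right-hand side is exactly the quantity appearing in (ii), the whole problem reduces to estimating the boundary contribution $\mathcal{H}^1(\partial^*P\cap\partial U)$ by a multiple of $\mathcal{H}^1(\partial^*P\cap U)$.

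For this, I would invoke the trace theory of $BV$ functions on the Lipschitz set $U$ (see, e.g., \cite[Theorem~3.87]{Ambrosio-Fusco-Pallara:2000}). Denoting by $\mathrm{Tr}_U\chi_P\in L^1(\partial U;\mathcal{H}^1)$ the inner trace of $\chi_P$, the Gauss--Green formula on $U$ together with the structure theorem $|D\chi_P| = \mathcal{H}^1\mres\partial^*P$ gives the identity $\mathcal{H}^1(\partial^*P\cap\partial U) = \int_{\partial U}\mathrm{Tr}_U\chi_P\,\mathrm{d}\mathcal{H}^1$, and the trace inequality then yields
\[
  \mathcal{H}^1(\partial^*P\cap\partial U) \le C_U\,\|\chi_P\|_{BV(U)} = C_U\big(\mathcal{L}^2(P) + \mathcal{H}^1(\partial^*P\cap U)\big),
\]
with $C_U$ depending only on $U$.

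It then remains to absorb the volume term $\mathcal{L}^2(P)$ into the relative perimeter, and this is where the hypothesis $\mathcal{L}^2(P)\le\frac12\mathcal{L}^2(U)$ is used. By the classical relative isoperimetric inequality on the bounded, connected, Lipschitz set $U$ one has $\min\{\mathcal{L}^2(P),\mathcal{L}^2(U\setminus P)\}^{1/2}\le C_U'\,\mathcal{H}^1(\partial^*P\cap U)$, and in our regime the minimum equals $\mathcal{L}^2(P)$. Multiplying the resulting bound $\mathcal{L}^2(P)^{1/2}\le C_U'\,\mathcal{H}^1(\partial^*P\cap U)$ by the trivial estimate $\mathcal{L}^2(P)^{1/2}\le\mathcal{L}^2(U)^{1/2}$ gives $\mathcal{L}^2(P)\le C_U'\,\mathcal{L}^2(U)^{1/2}\,\mathcal{H}^1(\partial^*P\cap U)$. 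Substituting this into the previous display and going back to the perimeter decomposition produces (ii) with $C = 1 + C_U + C_U C_U'\,\mathcal{L}^2(U)^{1/2}$, depending only on $U$.

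The one genuinely delicate ingredient is the identity $\mathcal{H}^1(\partial^*P\cap\partial U) = \int_{\partial U}\mathrm{Tr}_U\chi_P\,\mathrm{d}\mathcal{H}^1$ together with the trace inequality for $BV(U)$: this is precisely where the Lipschitz regularity of $\partial U$ enters (both statements fail for irregular domains; we also use connectedness of $U$, which is implicit in the statement — otherwise the inequality is plainly false, e.g.\ for two disjoint equal balls with $P$ one of them). Everything else is bookkeeping. An alternative route that avoids traces altogether is to extend $\chi_P$ across $\partial U$ by a bi-Lipschitz reflection in a collar neighborhood of $\partial U$ and then apply the global isoperimetric inequality in $\R^2$ to the extended set; this needs a slightly more careful geometric setup near $\partial U$ but leads to the same conclusion.
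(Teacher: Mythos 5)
Your argument is correct, and it takes a genuinely different route from the paper. The paper disposes of $\mathcal{H}^1(\partial^* P \cap \partial U)$ by bounding it with $C\,\mathrm{diam}(P)$ (using only that $\partial U$ is Lipschitz), then reduces to indecomposable components via Lemma~\ref{lemma: diam} and invokes a lemma from Friedrich--Solombrino; the whole thing is a two-line pointer to external results. You instead split $\mathcal{H}^1(\partial^* P) = \mathcal{H}^1(\partial^* P \cap U) + \mathcal{H}^1(\partial^* P \cap \partial U)$ and control the boundary term directly by the $BV$ trace inequality on the Lipschitz domain $U$ (together with the pasting identity $|D\chi_P|\mres\partial U = \mathrm{Tr}_U\chi_P\,\mathcal{H}^1\mres\partial U$, valid since the exterior trace vanishes for $P\subset U$), then absorb the resulting volume term $\mathcal{L}^2(P)$ using the classical relative isoperimetric inequality and the hypothesis $\mathcal{L}^2(P)\le\tfrac12\mathcal{L}^2(U)$. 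Each step is standard and all constants visibly depend only on $U$, so your version is self-contained and avoids passing to indecomposable components entirely; the paper's version is shorter but defers the substance to the cited lemma. Your remark about connectedness is a genuine and correct observation: the lemma as stated (and the cited lemma it rests on) is false for disconnected $U$ (two disjoint equal balls with $P$ one of them), and connectedness is used tacitly in the paper since the sets $D_i$ to which the lemma is applied are bi-Lipschitz images of a ball.
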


The proof of Lemma \ref{lemma: diam} can be found in \cite[Proposition 12.19, Remark 12.28]{maggi}. Noting that $\mathcal{H}^1(\partial^* P\cap \partial \ks{U})\leq C{\rm diam}(P)$ for some constant $C>0$ depending only on $U$, Lemma \ref{lemma: maggi} is a consequence of \cite[Lemma 4.6]{Solombrino}.  (Strictly speaking, \EEE if $P$ is not indecomposable,  we apply \cite[Lemma 4.6]{Solombrino}  for its indecomposable components.)   \EEE

We also recall the  structure theorem \ks{for} the boundary of planar sets   $E$  of finite perimeter in  \cite[Corollary~1]{Ambrosio-Morel}: there exists a unique countable decomposition  of $\partial^* E$ into pairwise almost disjoint rectifiable Jordan curves. Here, we  say that $\Gamma \subset \R^2$ is  a rectifiable Jordan curve if $\Gamma = {\Gamma}([a,b])$ for some $a < b$, {where by an abuse of notation the parametrization of $\Gamma$ is a  Lipschitz continuous map also denoted by $\Gamma$ that is one-to-one on $[a,b)$ and such that  $\Gamma(a) = \Gamma(b)­$.}

\textbf{Caccioppoli partitions.} We say that a partition $(P_j)_{j \ge 1}$ of an open set $U\subset \R^2$ is a \textit{Caccioppoli partition} of $U$ if  $\sum\nolimits_j \mathcal{H}^1(\partial^* P_j\cap U) < + \infty$.   The  local structure of Caccioppoli partitions can be characterized as follows, see \cite[Theorem 4.17]{Ambrosio-Fusco-Pallara:2000}.
\begin{theorem}\label{th: local structure}
    Let $(P_j)_j$ be a Caccioppoli partition of $U$. Then 
    $$\bigcup\nolimits_j P_j^{(1)} \cup \bigcup\nolimits_{i \neq j} (\partial^* P_i \cap \partial^* P_j)$$
    contains $\mathcal{H}^{1}$-almost all of $U$, where $P^{(1)}$ denote the points of density $1$.
\end{theorem}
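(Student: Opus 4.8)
The conclusion is local, so I may assume $U$ is bounded (restricting the partition to a ball $B\subset\subset U$ again gives a Caccioppoli partition of $B$, since $\partial^*(P_j\cap B)\cap B=\partial^* P_j\cap B$ up to $\mathcal H^1$-null sets). The plan is to apply Federer's theorem to each piece, reduce to a short list of cases by a density bookkeeping, and then dispose of the two ``bad'' cases — one cheaply, one via a Vitali covering argument built on the relative isoperimetric inequality.

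By Federer's theorem applied to each $P_j$ (countably many, so the exceptional sets stay $\mathcal H^1$-null), for $\mathcal H^1$-a.e.\ $x\in U$ and every $j$ one has $x\in P_j^{(0)}\cup P_j^{(1)}\cup\partial^* P_j$, with density exactly $\tfrac12$ at $\mathcal H^1$-a.e.\ point of $\partial^* P_j$. Since the $P_j$ are disjoint and $\sum_j\mathcal L^2(B(x,r)\cap P_j)=\mathcal L^2(B(x,r))$, a point lies in $P_j^{(1)}$ for at most one $j$ and in $\partial^* P_j$ for at most two indices (each density-$\tfrac12$ point contributes at least $\tfrac12$ to the total). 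Hence, up to an $\mathcal H^1$-null set, every $x\in U$ is of exactly one type: (a) $x\in P_j^{(1)}$ for some $j$; (b) $x\in\partial^* P_i\cap\partial^* P_j$ for exactly two indices, with $x\in P_k^{(0)}$ otherwise; (c) $x\in\partial^* P_{j_0}$ for a single $j_0$, with $x\in P_k^{(0)}$ for all $k\ne j_0$; (d) $x\in P_k^{(0)}$ for every $k$. Types (a) and (b) land in $\bigcup_j P_j^{(1)}\cup\bigcup_{i\ne j}(\partial^* P_i\cap\partial^* P_j)$, so it remains to show that the type-(c) and type-(d) sets are $\mathcal H^1$-null.

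A type-(c) point has density $0$ in $\bigcup_{k\le N,\,k\ne j_0}P_k$ and density $\tfrac12$ in $\bigcup_{k\ne j_0}P_k$, so by additivity of densities over disjoint sets it has density $\tfrac12$ in $\bigcup_{k>N,\,k\ne j_0}P_k$ — hence lies in $\partial^*\!\big(\bigcup_{k>N,\,k\ne j_0}P_k\big)$ — for \emph{every} $N$. Since $\mathcal H^1\big(\partial^*(\bigcup_{k>N,\,k\ne j_0}P_k)\cap U\big)\le\sum_{k>N}\mathcal H^1(\partial^* P_k\cap U)\to0$ as $N\to\infty$, the intersection over $N$ of these essential boundaries, and thus the type-(c) set, is $\mathcal H^1$-null.

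The main obstacle is type (d): here the local mass near $x$ is spread over infinitely many pieces of density $0$, so no single piece or interface detects $x$. I would handle it by a Vitali covering at a critical scale. Fix $N$, put $R_N:=\bigcup_{k>N}P_k$, so $\mathcal H^1(\partial^* R_N\cap U)\le\sigma_N:=\sum_{k>N}\mathcal H^1(\partial^* P_k\cap U)\to0$ and $\mathcal L^2(R_N)\le\mu_N:=\sum_{k>N}\mathcal L^2(P_k)\to0$. A type-(d) point $x$ has density $1$ in $R_N$; working in the bulk $\{\operatorname{dist}(\cdot,\partial U)>\eta\}$ with $N$ large so that $\mu_N\ll\eta^2$, the density of $R_N$ in $B(x,r)$ must drop below $\tfrac12$ at some scale $\le\eta/2$, and I pick the scale $\rho_x\le\eta/2$ at which it equals $\tfrac12$. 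Then trivially $\tfrac12\pi\rho_x^2=\mathcal L^2(B(x,\rho_x)\cap R_N)\le\mathcal L^2(R_N)\le\mu_N$, so $\rho_x\le C\sqrt{\mu_N}\to0$ (the covering is at a fine scale), while the relative isoperimetric inequality in $B(x,\rho_x)$ (cf.\ Lemma \ref{lemma: maggi}) gives $\mathcal H^1(\partial^* R_N\cap B(x,\rho_x))\ge c\,\rho_x$. Extracting a disjoint Vitali subfamily $\{B(x_m,\rho_{x_m})\}$ whose $5$-dilates cover the type-(d) set in the bulk, disjointness yields $\sum_m\rho_{x_m}\le C\sum_m\mathcal H^1(\partial^* R_N\cap B(x_m,\rho_{x_m}))\le C\,\mathcal H^1(\partial^* R_N\cap U)\le C\sigma_N$; letting $N\to\infty$ sends both the covering scale and $C\sigma_N$ to $0$, so the type-(d) set is $\mathcal H^1$-null in the bulk, and then $\eta\to0$ concludes. (Cases (a)--(c) can alternatively be packaged through the auxiliary function $f=\sum_j 2^{-j}\chi_{P_j}$, which lies in $BV(U)$ precisely because $\sum_j\mathcal H^1(\partial^* P_j\cap U)<\infty$: its approximate-continuity/jump dichotomy, together with the fact that the lacunary weights $2^{-j}$ are mutually isolated, identifies the good set — but an approximate continuity point whose limit is the accumulation value $0$ of the range is again a type-(d) point, so the covering argument above cannot be avoided.)
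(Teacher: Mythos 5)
Your proof is correct. Note that the paper does not prove this theorem --- it is quoted from \cite[Theorem 4.17]{Ambrosio-Fusco-Pallara:2000} --- so there is no in-paper argument to compare against; your proof follows the standard route (Federer's theorem, density bookkeeping, a Vitali covering at the scale where the tail set $R_N=\bigcup_{k>N}P_k$ has relative density exactly $\tfrac12$, backed by the relative isoperimetric inequality). The delicate type-(d) case, where mass escapes entirely into the tail, is handled correctly: the disjoint Vitali radii satisfy $\sum_m\rho_{x_m}\le C\sigma_N$ while each $\rho_{x_m}\le C\sqrt{\mu_N}$, and both $\sigma_N$ and $\mu_N$ vanish as $N\to\infty$, so the pre-measure $\mathcal{H}^1_\delta$ of the type-(d) set (in the bulk) vanishes with $\delta\to0$. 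Two cosmetic remarks: (i) the pointer to Lemma~\ref{lemma: maggi} is loose --- that lemma concerns a fixed Lipschitz domain, whereas what you actually invoke is the scale-invariant relative isoperimetric inequality on a ball, $\min\bigl(\mathcal{L}^2(E\cap B_r),\mathcal{L}^2(B_r\setminus E)\bigr)^{1/2}\le C\,\mathcal{H}^1(\partial^*E\cap B_r)$ (see \cite{maggi}); (ii) the same Vitali machinery also eliminates type~(c) if one selects the scale at which the density of $\bigcup_{k>N,\,k\ne j_0}P_k$ drops to $\tfrac14$, so the separate nested-essential-boundary argument, while correct and clean, is not strictly necessary. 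Neither point affects correctness.
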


  \textbf{Korn inequalities.} \EEE   As mentioned, we will rely on a  Korn-Poincar\'e inequality for $GSBD$ functions.  There has  been a variety of efforts to overcome a loss of control due to the combination of frame indifference and fracture, see e.g.\ \cite{CCS, Chambolle-Conti-Francfort:2014, Conti-Iurlano:15.2, Friedrich:15-3,  Friedrich:15-4}.  
  Of these options, the \emph{piecewise Korn inequality}, proven   in dimension two, see  \cite[Theorem 2.1]{Friedrich:15-4} for the case $p=2$ and \cite[Remark 5.6]{Friedrich:15-4} for general $p \in (1,\infty)$, provides the best control. 
To prove the uniform rectifiability, we use a refined variant of the piecewise Korn inequality, introduced for  the study of quasistatic crack evolution \cite{Solombrino}. Compared to \cite[Theorem~2.1]{Friedrich:15-4}, it refines the estimate on the partition in the sense  that the nearly rigid `pieces' have boundaries nearly coinciding with the crack.  We state the result for $GSBD^p$-functions \cite{DalMaso:13}, but the reader can simply replace $u \in GSBD^p(\Omega)$ by an admissible pair $(u,K)$, with the relation $J_u = K$.  We {recall} that $a\colon \R^2 \to \R^2$ is an (infinitesimal) \emph{rigid motion} if $a$ is affine and its \emph{symmetrized gradient} satisfies $ e(a) = ( \nabla a + (\nabla a)^{T})/2  = 0$.\EEE

\begin{proposition}[\BBB Piecewise Korn inequality\EEE]\label{thm:refinedKorn}
    Let $\Omega\subset \R^2$ be an open, bounded set with Lipschitz boundary, $0<\theta< 1$, and $p\in (1,\infty)$. Then, there exists  a \EEE constant  $C_{\theta}$  depending on $p$, $\Omega,$ and $\theta$   such that for $u \in GSBD^p(\Omega;\R^2)$ there is $u^\theta \in SBV(\Omega;\R^2)$ satisfying   
    \begin{align*}
        \mathcal{L}^2(\{u\neq u^\theta\}) &\leq \theta \left(\mathcal{H}^1(J_u) + \mathcal{H}^1(\partial \Omega) \right)^{2} 
        \end{align*}
        and there is  a finite Caccioppoli partition $(P_j)_{j=0}^{J}$ with  $P_0 = \{u \neq u^\theta\}$ and corresponding   rigid motions $a_j = A_j (\cdot) +   b_j $ such that 
        \begin{align*}
            \sum\nolimits_{j=0}^J \mathcal{H}^1\big(\partial^* P_j \cap (\Omega \sm J_u)\big)  &\leq \theta \left(\mathcal{H}^1(J_u) + \mathcal{H}^1(\partial \Omega) \right),  \\
            \|u^\theta - \sum\nolimits_{j=0}^J a_j \chi_{P_j}\|_{L^\infty(\Omega)} + \|\nabla u^\theta - \sum\nolimits_{j=0}^J A_j \chi_{P_j}\|_{L^1(\Omega)}&\leq C_\theta  \|e(u)\|_{L^p(\Omega)}.
        \end{align*}
        It holds that  $C_\theta\to \infty$ as $\theta \to 0$.
    \end{proposition}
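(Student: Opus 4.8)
I would follow the strategy of \cite{Friedrich:15-4} in the form refined by \cite{Solombrino}, and I sketch here how I would organize it. Write $L := \mathcal{H}^1(J_u) + \mathcal{H}^1(\partial\Omega)$. First I would reduce to the case of a bounded $u$: since $u$ is a.e.\ finite on the bounded set $\Omega$, I can fix a truncation level $M$ with $\mathcal{L}^2(\{|u|>M\})\le\tfrac\theta2 L^2$, put $\{|u|>M\}$ into $P_0$ (further bad regions will be added to $P_0$ below), and work with the bounded restriction of $u$ to $\Omega\setminus P_0$; on $P_0$ I would take $u^\theta$ to be a single rigid motion $a_0$, which disposes of the $L^\infty$ and $L^1$ contributions of $P_0$ once $\mathcal{L}^2(P_0)\le\theta L^2$ is known.

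\textbf{Stopping-time decomposition with local Korn.} Next I would run a stopping-time argument over dyadic squares $Q\subset\Omega$: declare $Q$ \emph{good} as soon as $\mathcal{H}^1(J_u\cap Q)\le\delta\,{\rm diam}(Q)$ for a fixed threshold $\delta=\delta(\theta)$ to be chosen, and subdivide $Q$ otherwise. On each good square, the Korn inequality for functions with small jump set \cite{CCS, Conti-Iurlano:15.2} produces, after the rescaling of Remark \ref{rem: normalization}, a Sobolev field agreeing with $u$ outside a set of measure $\lesssim(\mathcal{H}^1(J_u\cap Q))^2$ and a single rigid motion $a_Q$ controlling $u$ on a fixed fraction of $Q$ with errors $\|u-a_Q\|_{L^\infty}+\|\nabla u-\nabla a_Q\|_{L^1}\lesssim\|e(u)\|_{L^p(Q)}$. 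The squares that keep failing the test cluster near $J_u$; I would absorb their union, together with the small exceptional sets coming from the small-jump Korn inequality, into $P_0$, and then the $\mathcal{L}^2$-bound on $P_0$ follows by summing these contributions against $\mathcal{H}^1(J_u)$.

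\textbf{Merging and the partition.} I would then group the good squares into the partition elements $P_1,\dots,P_J$: merge two adjacent good squares $Q,Q'$ into the same piece precisely when $u$ has no substantial jump across their common edge, which by a trace estimate and a Poincar\'e inequality forces the two local rigid motions to differ by $\lesssim\|e(u)\|_{L^p(Q\cup Q')}$ (morally the mechanism behind Remark \ref{rem: vary}); otherwise their common edge is essentially part of $J_u$. Telescoping the pairwise bounds along chains of merged squares inside a fixed $P_j$ yields a single rigid motion $a_j=A_j(\cdot)+b_j$ with
\begin{equation*}
    \|u^\theta - a_j\|_{L^\infty(P_j)} + \|\nabla u^\theta - A_j\|_{L^1(P_j)} \le C_\theta\,\|e(u)\|_{L^p(\Omega)},
\end{equation*}
where $C_\theta$ grows because each merge pools a larger part of the domain under one rigid motion; summing over $j$ (and over $P_0$) gives the stated estimates. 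The leftover interior boundary $\bigcup_j\partial^*P_j\cap(\Omega\setminus J_u)$ then consists only of square edges across which $u$ does not genuinely jump but which were nonetheless not merged away, i.e.\ edges adjacent to $P_0$; I would bound their total length by choosing $\delta$ — hence the terminal scale of the stopping-time procedure — fine enough that it drops below $\theta L$, at the price of letting $C_\theta\to\infty$ as $\theta\to0$.

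\textbf{Main obstacle.} The delicate point is exactly this last balance: forcing the spurious interior boundary below an arbitrarily prescribed $\theta L$ while keeping the accumulated rigid-motion error a \emph{finite} (albeit $\theta$-dependent) multiple of $\|e(u)\|_{L^p(\Omega)}$. A short interface can trigger a merge that pools two large regions, so one must carefully track how many merges are performed and how the $L^\infty$ and $L^1$ errors telescope; this bookkeeping, together with the genuinely two-dimensional tools (slicing arguments and the decomposition of planar sets of finite perimeter into Jordan curves recalled above), is precisely what \cite{Solombrino} refines over the original piecewise Korn inequality of \cite{Friedrich:15-4}.
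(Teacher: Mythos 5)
The paper does not give a proof from scratch: it cites \cite[Theorem 4.1]{Solombrino} (itself a refinement of \cite[Theorem 2.1]{Friedrich:15-4}) for the case $p = 2$ and merely notes that the argument extends to all $p \in (1,\infty)$ because the $p$-version of the underlying piecewise Korn inequality is available in \cite[Remark 5.6]{Friedrich:15-4} and the refinement step in \cite{Solombrino} treats the right-hand side as a black box. Your attempt to re-derive the full result is therefore much more ambitious than what the paper actually does, and you are honest at the end that the decisive bookkeeping lives in \cite{Solombrino}, so the high-level structure (stopping time, small-jump Korn, merging) is in the right place.

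There is, however, a step that would fail as written. You propose to force $\sum_j \mathcal{H}^1(\partial^*P_j \cap (\Omega\setminus J_u)) \le \theta L$ by ``choosing $\delta$ --- hence the terminal scale of the stopping-time procedure --- fine enough.'' This goes in the wrong direction. Shrinking the stopping threshold $\delta$ declares more squares bad: at dyadic scale $2^{-k}$, bad squares obey $\mathcal{H}^1(J_u\cap Q) > \delta\, 2^{-k}$, so there are at most $C\delta^{-1}2^k \mathcal{H}^1(J_u)$ of them and the aggregate length of their edges is of order $\delta^{-1}\mathcal{H}^1(J_u)$. The spurious interior boundary adjacent to $P_0$ therefore behaves like $\delta^{-1}L$ and \emph{increases} as $\delta\to0$. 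The refinement in \cite{Solombrino} is obtained by a genuinely different mechanism, applied \emph{after} the unrefined piecewise Korn partition is in hand: across any portion of $\partial^*P_i\cap\partial^*P_j$ lying in $\Omega\setminus J_u$, the one-sided traces of $u$ agree $\mathcal{H}^1$-a.e., and a slicing estimate then forces the adjacent rigid motions $a_i$, $a_j$ to be close along a substantial part of that interface; one iteratively re-merges such pairs, controlling the process via the decomposition of the reduced boundary of planar sets of finite perimeter into Jordan curves, until the interior boundary drops below $\theta L$. It is this post-processing, not a finer stopping scale, that delivers the boundary estimate, and your sketch should be reorganized around it.
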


    We note that this result was not explicitly stated for $p \neq 2$ but it also holds in this setting. In fact, once \cite[Theorem 2.1]{Friedrich:15-4} for general $p$ is applied, the proof of the  refinement described in \cite[Theorem 4.1]{Solombrino} works for any positive number $\mathcal{E}$ on the right-hand side of \cite[Equations (21), (22)]{Solombrino}.

\section{Uniform rectifiability: Proof of Theorem \texorpdfstring{\ref{thm:UniformRect}}{2.7}} \label{sec:UniformRect}

This section is devoted to the proof of Theorem \ref{thm:UniformRect}.
It is difficult to prove directly that $K$ is contained in  an  Ahlfors-regular curve.  As a remedy, we  take advantage of a characterization of uniform rectifiability by big pieces of connected sets. \EEE

\begin{theorem}\label{thm: david}
    Let $(u,K)$ be a Griffith almost-minimizer \BBB  with respect to a gauge $h$. \EEE Then,  there exist  constants $C_{\rm ur} \geq 1$ and $\eps_{\rm ur} > 0$ both depending on $\mathbb{C}$ such that for all $x_0 \in K$ and $r_0 > 0$ with {$B(x_0,r_0) \subset \Omega$ and $h(r_0) \leq \eps_{\rm ur}$}, we can find {a compact connected} set $\Gamma$ such that
    $$\mathcal{H}^1(\Gamma) \le C_{\rm ur} r_0 \quad \quad  \text{and} \quad \quad \mathcal{H}^1\big(K \cap \Gamma \cap B(x_0,r_0)   \big) \ge \frac{1}{C_{\rm ur}} r_0.  $$
\end{theorem}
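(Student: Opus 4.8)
The plan is to reduce the statement to producing, in a fixed ball $B(x_0,r_0)$ (which after the rescaling of Remark~\ref{rem: normalization} we may take to be $B(0,1)$), a compact connected set $\Gamma$ with $\mathcal{H}^1(\Gamma)\lesssim 1$ meeting $K$ in length $\gtrsim 1$. The starting observation is that $\mathcal{H}^1(K\cap B(0,1))\gtrsim 1$ by Ahlfors-regularity \eqref{eqn:AhlforsReg}, so it is enough to find \emph{one} connected set of controlled length that carries a definite fraction of this crack length. First I would use the bound \eqref{eqn:AhlforsReg2} together with an equiintegrability/Chebyshev argument to select a scale and a good subcollection: since the $2$-elastic energy in $B(0,1)$ is bounded by $\bar C_{\rm Ahlf}$ and the $p$-energy for $p\in(1,2)$ is then also integrable with a favorable power of the radius, one can find, by a covering of $K\cap B(0,1)$ by balls $B(x_i,\rho)$ with $x_i\in K$ and $\rho$ a small fixed fraction, a subfamily of balls carrying most of the crack length on which the \emph{normalized $p$-elastic energy} $\omega_p(x_i,\rho)$ is as small as we like (say $\le\eta$). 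This is exactly the heuristic flagged in the introduction: the $2$-energy is dominated by surface energy, so along a large portion of $K$ the normalized $p$-energy is small.

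Next, at each such good ball $B(x_i,\rho)$ I would invoke the \textbf{piecewise Korn inequality}, Proposition~\ref{thm:refinedKorn}, applied to $u$ on $B(x_i,\rho)$ with a fixed small $\theta$. This produces a Caccioppoli partition $(P_j)$ into nearly rigid pieces with $\sum_j\mathcal{H}^1(\partial^*P_j\cap(B(x_i,\rho)\setminus K))\le\theta(\mathcal{H}^1(K\cap B(x_i,\rho))+\mathcal{H}^1(\partial B(x_i,\rho)))\lesssim\theta\rho$, and a modified function $u^\theta$ close to $\sum_j a_j\chi_{P_j}$ with the $L^1$-gradient error controlled by $\|e(u)\|_{L^p}$, hence (after normalizing) by $\rho^{1/2}\omega_p(x_i,\rho)^{1/2}\lesssim\rho^{1/2}\eta^{1/2}$. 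The dichotomy is then: either some piece $P_j$ meeting $B(x_i,\rho/2)$ together with the crack portion $K\cap B(x_i,\rho)$ "almost separates" $B(x_i,\rho)$ — meaning $\partial^*P_j$ is essentially contained in $K$ up to a piece of length $\le\theta$-small — in which case, by the relative isoperimetric inequality (Lemma~\ref{lemma: maggi}) and Lemma~\ref{lemma: diam} applied to indecomposable components, $\partial^*P_j$ decomposes into rectifiable Jordan curves (via the Ambrosio--Morel structure theorem) whose union $\Gamma_i$ is connected, has $\mathcal{H}^1(\Gamma_i)\lesssim\rho$, and satisfies $\mathcal{H}^1(K\cap\Gamma_i\cap B(x_i,\rho))\gtrsim\rho$; or else no piece almost-separates, and then the crack fails to disconnect the nearly rigid region around $x_i$, forcing $u^\theta$ (hence $u$) to transition between two substantially different rigid motions across $K$ on a set of positive measure, which makes $\|\nabla u^\theta-\sum_jA_j\chi_{P_j}\|_{L^1}$ large — contradicting the smallness $\lesssim\rho^{1/2}\eta^{1/2}$ if $\eta$ was chosen small relative to $\theta$.

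Once a good ball is guaranteed to produce such a connected set $\Gamma_i$ carrying crack length $\gtrsim\rho$, I would take $\Gamma$ to be $\Gamma_i$ for one such $i$ (or, if needed, the union of $\Gamma_i$ over the good subfamily joined by short arcs inside $B(x_0,r_0)$, whose total added length is $\lesssim r_0$ since there are $\lesssim\rho^{-1}$ of them and each arc has length $\lesssim r_0$ — but in fact a single good ball suffices since $\rho$ is a fixed fraction of $r_0$ and one ball already carries length $\gtrsim\rho\gtrsim r_0$). Tracking the constants, $C_{\rm ur}$ depends only on $\mathbb{C}$ (through $C_{\rm Ahlf}$, $\bar C_{\rm Ahlf}$, the fixed choices of $p$, $\theta$, $\eta$, and the isoperimetric constant of the unit ball), and $\eps_{\rm ur}$ is fixed so that $h(r_0)\le\eps_{\rm ur}$ makes \eqref{eqn:AhlforsReg}--\eqref{eqn:AhlforsReg2} applicable at all the relevant scales.

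\textbf{Main obstacle.} The delicate point is making the dichotomy quantitative: showing that if no nearly-rigid piece is almost-separated by the crack, then the $L^1$-gradient discrepancy in Proposition~\ref{thm:refinedKorn} is forced to be large. This requires a chaining/connectivity argument inside the complement of $K$ (in the spirit of David--Semmes for Mumford--Shah) to propagate a single rigid motion across the non-separating region, combined with a lower bound on the jump height at some point of $K$ — precisely where the normalized jump $J_u$ enters — and a careful bookkeeping of the small boundary pieces $\partial^*P_j\setminus K$ so that the "good" part of the partition genuinely sees the crack rather than leaking through these gaps. The rest is a matter of choosing the hierarchy of small parameters $\eta\ll\theta\ll 1$ in the right order.
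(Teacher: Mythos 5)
Your architecture is the right one and largely matches the paper's: rescale, use a Chebyshev argument (their Lemmas~\ref{lemma: david}--\ref{lemma: cheb}) to find a ball of small normalized $p$-elastic energy, invoke the piecewise Korn inequality (Proposition~\ref{thm:refinedKorn}) to get a Caccioppoli partition whose internal boundaries away from $K$ are $\theta$-small, and finally pass through the relative isoperimetric inequality and the Ambrosio--Morel structure theorem on indecomposable components to produce the compact connected set. That much is the paper's Steps~1, 6, and 7.

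The genuine gap is the step you flag as the ``main obstacle,'' and you underestimate its nature. You describe it as a soft chaining argument, but in fact it is the one place where the variational structure of the problem must be used --- and your sketch never constructs a competitor. Concretely: \emph{why} should the low-energy ball carry two substantially different rigid motions, forcing the crack to separate two large pieces of the partition? If $u$ were close to a single rigid motion throughout the ball, no piece $P_j$ of the Korn partition would have a long boundary, and your ``either'' branch simply fails --- $\partial^* P_j$ could be short, living entirely off $K$, and $\mathcal{H}^1(K\cap\Gamma_i\cap B)\gtrsim\rho$ would be unjustified. The paper handles this in Steps~2--5 by first carving out circular domains $D_1,\dots$ that are away from $K$ and of definite size (Steps~2--4), then proving the quantitative separation $|b_i-b_j| + r|A_i - A_j| \ge c_{\rm jump}\sqrt{r}$ by \emph{contradiction against almost-minimality}: if the rigid motions across $K$ were close, one can interpolate smoothly through the $D_i$, extend to the ball, add a short wall set of length $\lesssim (2c_A + 6\pi\mu/\delta)r$, and obtain a competitor that beats $\mathcal{H}^1(K\cap B(0,s))$ by Ahlfors regularity. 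Without this competitor argument, the dichotomy has no teeth.

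The second issue is in your ``either'' branch: you need a lower bound on the length of $\partial^* P_j$ before you can claim a definite piece of $K$ lies on it. The paper gets this through the circular domains: it shows in Step~6 that two distinct pieces $P_{j_1}, P_{j_2}$ cover $D_1, D_2$ respectively, and in Step~7 passes to an indecomposable component $Q$ containing $D_1$ whose closed reduced boundary must separate $D_1$ from $D_2$ in $B(0,1)$, so $\mathcal{H}^1(\partial^* Q \cap B(0,1)) \ge c_{\rm dim}$ by Lemma~\ref{lemma: diam}. Your proposal skips the construction of the $D_i$ entirely, and so has no analogue of this separating-geometry lower bound. To complete your argument you would need: (i) the circular arcs $A_{\rm g}^\mu$ of Step~3 and the resulting $D_i$; (ii) the competitor construction of Step~5 yielding the jump lower bound \eqref{eqn:domainJump}; and (iii) the separation argument of Step~7 yielding the length lower bound on $\partial^* Q$. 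At present none of these three is in the sketch.
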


{With the above theorem in hand, we may directly apply {\cite[Section 32]{David} (see also \cite[Theorem 31.5]{David})} to obtain Theorem \ref{thm:UniformRect}. We note that the parameters $\eps_{\rm ur}$ and $C_{\rm ur}$ are possibly changed to satisfy the definition of uniformly rectifiable within \eqref{eq_defi_ur}. We now focus our   attention on the proof of Theorem \ref{thm: david}.}

{As a preparation for the proof of Theorem \ref{thm: david},}  we show that we can initialize the $p$-elastic energy $\omega_p$ introduced in \eqref{eqn:pelastic}, in the sense  that, \BBB for any \EEE  $x_0 \in K$ and $B(x_0,r_0) \subset \Omega$, we find suitable balls $B(y,r) \subset B(x_0,r_0)$ such that $\omega_p(y,r) \le \eps$.  This is achieved by combining the following \ks{two}  abstract results. 

\begin{lemma}\label{lemma: david}
 Suppose that $K$ is a relatively closed Ahlfors-regular subset of $B(x_0,r_0) \subset \R^2$ \BBB and let $C_{\rm Ahlf}$ denote its Ahlfors-regularity constant.
    Suppose that there exists a function $u\in LD( B(x_0,r_0) \setminus K)$ such that $\int_{B(x,r)\setminus K} |e(u)|^2  \, \dd z \le C_0r$ for all $B(x,r) \subset B(x_0,r_0)$ for \cl{some} $C_0>0$. Let $p \in [1,2)$. Then, there exists $C_p >0$ depending on $C_0$, ${p}$, and  \BBB $C_{\rm Ahlf}$  \EEE such that  
    $$\int_{K \cap B(x_0,r_0/3)} \Big( \int_{0}^{r_0/3} \omega_p(y,t)   \frac{ \, \dd t }{t} \Big) \, \dd \mathcal{H}^1(y) \le C_p r_0. $$
\end{lemma}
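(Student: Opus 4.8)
The plan is to estimate the iterated integral by Fubini, reducing the claim to a bound on $\int_{K\cap B(x_0,r_0/3)} \omega_p(y,t)\,\dd\mathcal H^1(y)$ for each fixed $t$, and then integrating in $t/t$. The starting point is that, by Hölder's inequality applied on the ball $B(y,t)\setminus K$ (whose measure is at most $\pi t^2$), we have for $p\in[1,2)$
\[
    \omega_p(y,t) = \Big(t^{p/2}\dashint_{B(y,t)\setminus K}|e(u)|^p\Big)^{2/p}
    \le C\, t^{1-2/p}\Big(\int_{B(y,t)\setminus K}|e(u)|^p\Big)^{2/p}.
\]
So it suffices to control, for fixed $t\in(0,r_0/3)$, the quantity $\int_{K\cap B(x_0,r_0/3)}\big(\int_{B(y,t)\setminus K}|e(u)|^p\big)^{2/p}\,\dd\mathcal H^1(y)$. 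Here I would split into two regimes according to whether $\int_{B(y,t)\setminus K}|e(u)|^p$ is large or small relative to its "expected" size $t^{2-p/2}$ (the scaling dictated by the hypothesis $\int_{B(x,r)\setminus K}|e(u)|^2\le C_0 r$, which via Hölder gives $\int_{B(x,r)\setminus K}|e(u)|^p\le C\, r^{2-p/2}$).

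The main work is the following averaging estimate. For fixed $t$, I would bound $\int_{K\cap B(x_0,r_0/3)}\int_{B(y,t)\setminus K}|e(u)|^p\,\dd z\,\dd\mathcal H^1(y)$ by swapping the order of integration: it equals $\int_{B(x_0,r_0/3+t)\setminus K}|e(u)(z)|^p\,\mathcal H^1\big(K\cap B(x_0,r_0/3)\cap B(z,t)\big)\,\dd z \le C_{\rm Ahlf}\, t\int_{B(x_0,r_0/2)\setminus K}|e(u)|^p\,\dd z \le C\, t\, r_0^{2-p/2}$, using Ahlfors-regularity of $K$ for the inner Hausdorff measure and then the global $L^p$ bound coming from the $L^2$ hypothesis on the ball $B(x_0,r_0/2)$. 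This controls the integral of $\int_{B(y,t)\setminus K}|e(u)|^p$ but not directly its $(2/p)$-th power; since $2/p>1$, the power makes the functional superlinear. To handle this, I would use that $2/p<2$ together with an interpolation: writing $F(y):=\int_{B(y,t)\setminus K}|e(u)|^p$, we have $\int_K F^{2/p} = \int_K F\cdot F^{2/p-1}$, and $F(y)\le C\, t^{2-p/2}$ pointwise (again by Hölder from the $L^2$-bound on $B(y,t)$, valid since $B(y,t)\subset B(x_0,r_0)$). Hence $F^{2/p-1}\le C(t^{2-p/2})^{2/p-1}$, so $\int_{K\cap B(x_0,r_0/3)} F^{2/p}\,\dd\mathcal H^1 \le C\,(t^{2-p/2})^{2/p-1}\int_{K\cap B(x_0,r_0/3)}F\,\dd\mathcal H^1 \le C\, t^{(2-p/2)(2/p-1)}\cdot t\, r_0^{2-p/2}$. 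A quick exponent check: $(2-p/2)(2/p-1) = 4/p - 2 - 1 + p/2 = 4/p-3+p/2$; combining with the prefactor $t^{1-2/p}$ from the Hölder step and the extra $t$, the total power of $t$ is $(1-2/p)\cdot(2/p)\cdot$ — I need to be careful here, so let me instead just track $\omega_p$ directly: $\omega_p(y,t)\le C\,t^{1-2/p}F(y)^{2/p}$ and $F(y)^{2/p} = F(y)\cdot F(y)^{2/p-1}\le C\,F(y)\,(t^{2-p/2})^{2/p-1} = C\,F(y)\,t^{(2-p/2)(2/p-1)}$, so $\omega_p(y,t)\le C\,t^{1-2/p+(2-p/2)(2/p-1)}F(y) = C\,t^{-1}F(y)$ after simplifying the exponent to $1-2/p + 4/p-3+p/2 = -2+2/p+p/2 = -2 + 2/p + p/2$; hmm, this should come out to $-1$ only if $p=2$, so in fact for $p<2$ one gets $\omega_p(y,t)\le C\,t^{\,\rho-1}F(y)$ with $\rho = 2/p+p/2-1>0$. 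Then $\int_K\omega_p(y,t)\,\dd\mathcal H^1(y)\le C\,t^{\rho-1}\int_K F\,\dd\mathcal H^1 \le C\,t^{\rho-1}\cdot t\,r_0^{2-p/2} = C\,t^{\rho}r_0^{2-p/2}$, which is $\le C\,t^{\rho}r_0^{-\rho}\cdot r_0$ only if $2-p/2 = 1-\rho$, i.e. $\rho = p/2-1<0$ — a contradiction, signalling that I should not use the crude pointwise bound $F\le C\,t^{2-p/2}$ but rather the scale-$r_0$ bound $\int_{B(x_0,r_0)\setminus K}|e(u)|^2\le C_0 r_0$ more cleverly, or equivalently normalize so that WLOG $r_0=3$ and absorb the $r_0$-scaling at the very end via the rescaling invariance of $\omega_p$ noted in Remark \ref{rem: normalization}.

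The cleanest route, which I would actually follow: by the rescaling in Remark \ref{rem: normalization} (and homogeneity of all quantities), it suffices to prove the estimate for $r_0=3$, i.e. $\int_{K\cap B(x_0,1)}\int_0^1\omega_p(y,t)\,\dd t/t\,\dd\mathcal H^1(y)\le C_p$, under $\int_{B(x,r)\setminus K}|e(u)|^2\le C_0 r$ for all $B(x,r)\subset B(x_0,3)$ and $K$ Ahlfors-regular in $B(x_0,3)$. By Fubini it is $\int_0^1\frac1t\big(\int_{K\cap B(x_0,1)}\omega_p(y,t)\,\dd\mathcal H^1(y)\big)\,\dd t$. Fix $t\le 1$; using Hölder in $z$ on $B(y,t)\setminus K$ as above, $\omega_p(y,t)\le C\, t^{1-2/p}\big(\int_{B(y,t)\setminus K}|e(u)|^p\big)^{2/p}$, and then Hölder in $y$ over $K\cap B(x_0,1)$ with exponents $2/p$ and $(2/p)'$ (legitimate since $2/p>1$) plus Ahlfors-regularity $\mathcal H^1(K\cap B(x_0,1))\le C_{\rm Ahlf}$ gives
\[
    \int_{K\cap B(x_0,1)}\omega_p(y,t)\,\dd\mathcal H^1(y)
    \le C\, t^{1-2/p}\Big(\int_{K\cap B(x_0,1)}\int_{B(y,t)\setminus K}|e(u)|^p\,\dd z\,\dd\mathcal H^1(y)\Big)^{2/p}\cdot(C_{\rm Ahlf})^{1-p/2}.
\]
Now the inner double integral, by swapping the order of integration and Ahlfors-regularity (covering $K\cap B(z,t)$), is $\le C_{\rm Ahlf}\,t\int_{B(x_0,2)\setminus K}|e(u)|^p\,\dd z\le C\,t\,(\int_{B(x_0,2)\setminus K}|e(u)|^2)^{p/2}\le C\,t\,(C_0)^{p/2}$, using $\mathcal L^2(B(x_0,2))\lesssim 1$ and the $L^2$-hypothesis on $B(x_0,2)\subset B(x_0,3)$. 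Hence $\int_{K\cap B(x_0,1)}\omega_p(y,t)\,\dd\mathcal H^1(y)\le C\, t^{1-2/p}(t)^{2/p} = C\, t$, with $C=C_p$ depending on $C_0,p,C_{\rm Ahlf}$. Therefore $\int_0^1\frac1t\cdot C t\,\dd t = C$, which is the claim for $r_0=3$; undoing the scaling multiplies the right side by $r_0$ (equivalently $r_0/3$), giving $\le C_p r_0$. The only genuine subtlety — and the step I would double-check most carefully — is the use of Hölder in the $y$-variable over $K$ with respect to $\mathcal H^1$ and the subsequent Fubini/layer-cake swap between $\dd\mathcal H^1(y)$ and $\dd z$: one needs the set $\{(y,z): y\in K\cap B(x_0,1),\ z\in B(y,t)\setminus K\}$ to be measurable for the product measure $\mathcal H^1\!\restriction_K\otimes\mathcal L^2$ and the kernel $\chi_{\{|y-z|<t\}}$ to be jointly measurable, which holds by continuity, and then $\int_{K\cap B(x_0,1)}\mathcal L^2$-mass$=\int_{\mathcal L^2}\mathcal H^1$-mass with the inner $\mathcal H^1$-mass being $\mathcal H^1(K\cap B(x_0,1)\cap B(z,t))\le C_{\rm Ahlf} t$ by Ahlfors-regularity (valid for all $z$, since even when $z\notin K$ one still has $\mathcal H^1(K\cap B(z,t))\le C_{\rm Ahlf}t$ by covering $B(z,t)$ by a bounded number of balls of radius $t$ centered on $K$, or directly from the upper Ahlfors bound extended to arbitrary centers). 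Everything else is routine Hölder and scaling.
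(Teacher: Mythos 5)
The proposal has a genuine gap: the fixed-scale estimate you try to establish, namely $\int_{K\cap B(x_0,1)}\omega_p(y,t)\,\dd\mathcal H^1(y)\le C\,t$ for each $t\le 1$, is simply \emph{false}, and the two steps used to derive it both break.

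First, the algebra converting the average to an integral has the wrong exponent. From the definition $\omega_p(y,t)=\big(t^{p/2}\dashint_{B(y,t)\setminus K}|e(u)|^p\big)^{2/p}$ and $\dashint_{B(y,t)}=\frac{1}{\pi t^2}\int_{B(y,t)}$, one gets
$\omega_p(y,t)=\pi^{-2/p}\,t^{1-4/p}\big(\int_{B(y,t)\setminus K}|e(u)|^p\big)^{2/p}$, not $t^{1-2/p}(\cdots)^{2/p}$. This is not a typo that can be absorbed: with the correct power $t^{1-4/p}$, your chain of estimates yields $\int_{K\cap B(x_0,1)}\omega_p(y,t)\,\dd\mathcal H^1(y)\lesssim t^{1-2/p}$, and $\int_0^1 t^{-2/p}\,\dd t=\infty$ for $p<2$. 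Second, the ``H\"older in $y$'' step, which asserts
$\int_K F^{2/p}\,\dd\mathcal H^1\le\big(\int_K F\,\dd\mathcal H^1\big)^{2/p}\mathcal H^1(K)^{1-p/2}$ with $F(y)=\int_{B(y,t)\setminus K}|e(u)|^p$, is the \emph{reverse} of Jensen's inequality (since $s\mapsto s^{2/p}$ is convex, Jensen gives exactly the opposite inequality). It is not a legitimate application of H\"older.

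More decisively, the target inequality itself cannot hold. Take $K$ a line segment of length $\sim r_0$ and let $|e(u)|^2\approx s^{-1}$ on the $[s/2,s]$-shell around $K$ and $0$ elsewhere, for some fixed $s\ll r_0$. Then $\int_{B(x,r)\setminus K}|e(u)|^2\lesssim r$ for all balls, but for every $y\in K$ one checks $\omega_p(y,s)\approx 1$ (for all $p\le 2$), so $\int_{K}\omega_p(y,s)\,\dd\mathcal H^1(y)\approx\mathcal H^1(K)\approx r_0\not\lesssim s$. Hence no single-scale (fixed-$t$) Fubini/H\"older argument can close the estimate; the gain from $p<2$ is realised only across scales. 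This is the whole point of the cited result of David. Indeed, the paper does not prove this lemma at all: it is \cite[Theorem 23.8 and Remark 23.16]{David}, a Carleson-measure estimate whose proof proceeds by a Whitney/stopping-time argument that exploits cancellation over the full range of scales $t$ simultaneously, rather than bounding the $y$-integral scale by scale. A repair of your approach would have to reintroduce this multi-scale structure rather than estimating $\int_K\omega_p(\cdot,t)\,\dd\mathcal H^1$ for a single $t$.
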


For a proof we refer to \cite[Theorem 23.8]{David}, where we observe by   \cite[Remark 23.16]{David} that the proof actually works for any function $g\colon B(x_0,r_0) \to [0,\infty)$ with  $\int_{B(x,r)\setminus K} g(z)^2  \, \dd z \le C_0r$ for all $B(x,r) \subset B(x_0,r_0)$.

\begin{lemma}\label{lemma: cheb}
    {Suppose that $K$ is a relatively closed Ahlfors-regular subset of $B(x_0,\ks{r}) \subset \R^2$ and let $C_{\rm Ahlf}$ denote its Ahlfors-regularity constant.}
    Suppose that there exists a function $\psi \colon  ( B(x_0,r)\cap K) \times [0,r]  $ such that   
    $$\int_{K \cap B(x_0,r)} \Big( \int_0^r \psi(y,t) \frac{ \dd t }{t} \Big) \, \dd \mathcal{H}^1(y)  \le \bar{C} r $$
    for some $\bar{C}>0$. Then for all $\eps>0$ there exists {$c_\eps \in (0,1)$}, depending on $\eps$, $\bar{C}$ and $C_{\rm Ahlf}$ such that there are $y \in K \cap B(x_0,r)$ and $t \in [c_{\eps}r,r] $   with 
    $$ \psi(y,t) \le \eps. $$ 
\end{lemma}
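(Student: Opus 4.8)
The plan is to argue by contradiction using a Chebyshev-type (averaging) argument. Suppose the claim fails for some $\eps > 0$: then for the sequence $c_n = 1/n$ (or any sequence $c_\eps \to 0$), we would have $\psi(y,t) > \eps$ for every $y \in K \cap B(x_0,r)$ and every $t \in [c_n r, r]$. Using this lower bound inside the double integral hypothesis, I would estimate
\begin{align*}
  \bar{C} r \ \geq\ \int_{K \cap B(x_0,r)} \Big( \int_0^r \psi(y,t) \frac{\dd t}{t} \Big) \dd \mathcal{H}^1(y) \ \geq\ \int_{K \cap B(x_0,r)} \Big( \int_{c_n r}^{r} \psi(y,t) \frac{\dd t}{t} \Big) \dd \mathcal{H}^1(y),
\end{align*}
and then bound the inner integral from below by $\eps \int_{c_n r}^{r} \frac{\dd t}{t} = \eps \log(1/c_n)$, so that
\begin{align*}
  \bar{C} r \ \geq\ \eps \log(1/c_n) \, \mathcal{H}^1\big(K \cap B(x_0,r)\big).
\end{align*}

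The next step is to invoke Ahlfors-regularity of $K$: by assumption $K$ is a relatively closed Ahlfors-regular subset of $B(x_0,r)$ with constant $C_{\rm Ahlf}$, so (using that $x_0 \in K$, or more generally that $K$ is nonempty in the ball) we have the lower bound $\mathcal{H}^1(K \cap B(x_0,r)) \geq r / C_{\rm Ahlf}$. Actually one should be a touch careful here: Ahlfors-regularity as stated in \eqref{eqn:AhlforsReg} is centered at points of $K$, so I would either assume $x_0 \in K$ (consistent with how the lemma is applied, since in Lemma~\ref{lemma: david} the relevant ball is $B(x_0, r_0/3)$ with $x_0 \in K$ and one rescales), or pick any point $z \in K \cap B(x_0,r/2)$ and use $\mathcal{H}^1(K \cap B(x_0,r)) \geq \mathcal{H}^1(K \cap B(z, r/2)) \geq r/(2C_{\rm Ahlf})$. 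Either way, combining with the previous display gives
\begin{align*}
  \bar{C} r \ \geq\ \frac{\eps}{2 C_{\rm Ahlf}} \log(1/c_n)\, r,
\end{align*}
i.e.\ $\log(1/c_n) \leq 2 \bar{C} C_{\rm Ahlf} / \eps$, which is a fixed finite bound independent of $n$. This contradicts $c_n \to 0$, and the contradiction proves that some admissible $c_\eps$ must exist.

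To make the statement quantitative (as the lemma asserts $c_\eps$ depends only on $\eps$, $\bar C$, $C_{\rm Ahlf}$), I would simply read off the threshold directly rather than argue by contradiction: set
\begin{align*}
  c_\eps := \exp\!\left(-\frac{4 \bar{C} C_{\rm Ahlf}}{\eps}\right) \in (0,1),
\end{align*}
and observe that if $\psi(y,t) > \eps$ held on all of $(K \cap B(x_0,r)) \times [c_\eps r, r]$, the chain of inequalities above would yield $\bar{C} r \geq 2\bar{C} C_{\rm Ahlf} \cdot \frac{1}{2C_{\rm Ahlf}} r \cdot 2 = 2 \bar C r$, a contradiction; hence there exist $y \in K \cap B(x_0,r)$ and $t \in [c_\eps r, r]$ with $\psi(y,t) \leq \eps$. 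I do not expect a genuine obstacle here; the only mild subtlety is the measurability of $\psi$ so that Fubini/Tonelli applies to the double integral, and the bookkeeping of where exactly Ahlfors-regularity is centered (which I would handle by the remark above about passing to a nearby point of $K$). Note this lemma is purely measure-theoretic and does not use minimality of $(u,K)$ at all — minimality enters only through the hypothesis (via Lemma~\ref{lemma: david}) that such a $\psi = \omega_p$ with the integrated bound exists.
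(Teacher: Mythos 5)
Your proposal is correct and follows essentially the same Chebyshev/averaging argument as the paper's proof, including the identical chain of inequalities and the invocation of Ahlfors-regularity to bound $\mathcal{H}^1(K\cap B(x_0,r))$ from below. The extra remarks on making $c_\eps$ explicit and on the centering of the Ahlfors lower bound are sound refinements but not deviations from the paper's approach.
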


\begin{proof}
    This is a standard Chebyshev argument, used for instance  in \cite[Corollary 23.38]{David}. We report the argument here for convenience of the reader.  
    {Suppose by contradiction that for fixed $\eps>0$, for all $c_\eps>0$ the statement fails. In particular, for any choice of} $c_\eps>0,$   
    we {have} $\psi(y,t)>\eps$ for all $y \in K \cap B(x_0,r)$ and $t \in [c_{\eps}r,r]$, and thus
    \begin{align*}
        \bar{C}r &  \ge \int_{K \cap B(x_0,r)}  \Big( \int_0^{r} \psi(y,t) \frac{ \dd t }{t}  \Big) \, \dd \mathcal{H}^1(y) \ge  \int_{K \cap B(x_0,r)} \Big( \int_{c_{\eps}r }^{r} \eps \frac{ \dd t }{t} \Big) \, \dd \mathcal{H}^1(y)  \\
                 & = \eps \mathcal{H}^1\big(K \cap B(x_0,r)\big) \left(\int_{c_{\eps}r }^{r} \frac{1}{t} \dd{t}\right) \ge \eps  C_{\rm Ahlf}^{-1}   r   \, {\rm log}( (c_{\eps})^{-1}),
    \end{align*}
    where we used \eqref{eqn:AhlforsReg}. 
    For $c_\eps$ small enough (depending {on $\varepsilon$}, $C_{\rm Ahlf}$, and $\bar{C}$) the above estimate yields a contradiction. 
\end{proof}

In the proof, we will also use the following general property of Ahlfors-regular sets.

\begin{lemma}[Lemma 23.25 of \cite{David}]\label{lemm: another david}
    {Suppose that $K$ is a relatively closed and Ahlfors-regular subset of $B(x_0,2r_0) \subset \R^2$, with constant $C_{\rm Ahlf}$.}
    Then, there is a constant $\bar{C} \geq 1$ only depending on $C_{\rm Ahlf}$ such that for any   $\gamma \in (0,1)$ we have
    $$\mathcal{L}^2\big(\big\{z \in B(  x_0, \EEE r)\colon \, {\dist}(z,K)\leq \gamma r\big\}\big) \leq \bar{C}\gamma r^2 \quad \text{ for all $0 < r \le r_0$}.$$
\end{lemma}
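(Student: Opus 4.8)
The plan is to prove the estimate by a Vitali-type covering argument, trading the packing number of a family of small balls against the one-dimensional Ahlfors bound \eqref{eqn:AhlforsReg}. First I would reduce to small $\gamma$: if $\gamma \ge 1/6$ the claim is trivial because $\mathcal{L}^2(B(x_0,r)) = \pi r^2 \le 6\pi \gamma r^2$, so it suffices to treat $\gamma \in (0,1/6)$, where there is enough slack in the radius bookkeeping to keep every auxiliary ball inside $B(x_0,2r_0)$, i.e.\ in the region where $K$ is Ahlfors-regular.

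Fix such a $\gamma$, set $V := \{z \in B(x_0,r) : {\rm dist}(z,K) \le \gamma r\}$, and assume $V \ne \emptyset$ (otherwise there is nothing to prove). Every $z \in V$ lies within $\gamma r$ of the nonempty compact set $K_0 := K \cap \overline{B(x_0,(1+\gamma)r)}$, so choosing a maximal $\gamma r$-separated subset $\{y_1,\dots,y_N\} \subset K_0$ gives, by maximality, $K_0 \subset \bigcup_{i=1}^N B(y_i,\gamma r)$ and hence $V \subset \bigcup_{i=1}^N B(y_i,2\gamma r)$; in particular $\mathcal{L}^2(V) \le 4\pi N \gamma^2 r^2$. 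It thus remains to bound $N$. The balls $B(y_i,\gamma r/2)$ are pairwise disjoint, and since $\gamma < 1/6$ and $r \le r_0$ they all lie in $B(x_0,(1+\tfrac{3\gamma}{2})r) \subset B(x_0,\tfrac{5}{4}r_0) \subset B(x_0,2r_0)$; applying the lower bound in \eqref{eqn:AhlforsReg} at each $y_i \in K$ and summing yields
\begin{equation*}
    N\,\frac{\gamma r}{2C_{\rm Ahlf}} \le \sum_{i=1}^N \mathcal{H}^1\big(K \cap B(y_i,\gamma r/2)\big) = \mathcal{H}^1\Big(K \cap \bigcup_{i=1}^N B(y_i,\gamma r/2)\Big) \le \mathcal{H}^1\big(K \cap B(x_0,\tfrac{5}{4}r)\big).
\end{equation*}

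The remaining ingredient is the auxiliary \emph{upper regularity} bound $\mathcal{H}^1(K \cap B(x_0,\tfrac{5}{4} r)) \le C_1 r$ with $C_1 = C_1(C_{\rm Ahlf})$, and this is the one step that is not completely automatic, precisely because $x_0$ need not belong to $K$. I would obtain it by covering $K \cap \overline{B(x_0,\tfrac{5}{4} r)}$ by a maximal $(r/4)$-separated family $\{w_j\} \subset K$: each ball $B(w_j,r/4)$ lies in $B(x_0,\tfrac{3}{2} r) \subset B(x_0,2r_0)$, so the upper bound in \eqref{eqn:AhlforsReg} applies and gives $\mathcal{H}^1(K \cap B(w_j,r/4)) \le C_{\rm Ahlf} r/4$, while the number of indices $j$ is at most a universal constant by comparing the areas of the pairwise disjoint balls $B(w_j,r/8) \subset B(x_0,\tfrac{11}{8}r)$. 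Combining everything gives $N \le C_2/\gamma$ for some $C_2 = C_2(C_{\rm Ahlf})$, whence $\mathcal{L}^2(V) \le 4\pi N \gamma^2 r^2 \le 4\pi C_2 \gamma r^2$, proving the claim with $\bar C := \max\{6\pi,\, 4\pi C_2\}$. I expect the only genuine care to be required in this last paragraph — keeping all auxiliary balls within $B(x_0,2r_0)$ and dealing with the case $x_0 \notin K$ — while the rest is a routine covering computation.
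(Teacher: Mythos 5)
Your proof is correct and complete. The paper itself offers no proof of this statement — it simply cites \cite[Lemma~23.25]{David} — so there is no in-paper argument to compare against, but your Vitali-type covering/packing argument is the standard way to prove porosity estimates for Ahlfors-regular sets and is essentially the same mechanism as in David's book. You are right to flag the two genuine points of care: (a) the packing bound on $N$ uses the \emph{lower} Ahlfors bound at the separated net points $y_i\in K$, while the bound on $\mathcal{H}^1(K\cap B(x_0,\tfrac54 r))$ requires a separate covering step because the upper Ahlfors bound as stated only applies to balls centered on $K$ and $x_0$ need not lie in $K$; and (b) all auxiliary balls must remain inside $B(x_0,2r_0)$, which your bookkeeping with $\gamma<1/6$ (more slack than strictly needed, but fine) and $r\le r_0$ ensures. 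The only stylistic remark is that the initial achievement of the distance by a point of $K_0$ implicitly uses that $K$ is relatively closed in $B(x_0,2r_0)$ and that the relevant closed ball stays inside $B(x_0,2r_0)$; you have arranged the radii so this holds, but it is worth making explicit since $K$ need not be closed in $\R^2$.
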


With this, we can now prove Theorem \ref{thm:UniformRect} in two parts.
The first part  (Steps 1--5) shows that, in a ball of low elastic energy, the rigid motions used to approximate $u$ in disjoint regions away from the crack have a distance bounded from below.
In the second part  (Steps 6--7),  we use the  \BBB piecewise Korn \EEE  inequality (Proposition \ref{thm:refinedKorn}) to show that we may separate the regions of the domain with different values of $u$ by the crack plus a vanishingly small part connecting the crack.

\begin{proof}[Proof of Theorem \ref{thm: david} (and Theorem \ref{thm:UniformRect})]
    We let $x_0 \in K$ and $r_0 >0$ be such that $ B(x_0,r_0) \EEE \subset \Omega$.  
    We start with fixing some parameters we use throughout the proof. {First, choose $\eps_{\rm ur}>0$ (as in the theorem statement) sufficiently small such that 
        \begin{align}\label{eq:h000}
            {h(r_0)} \leq \eps_{\rm ur}\le \min\Big\{\frac{1}{32}C_{\rm Ahlf}^{-1}, \eps_{\rm Ahlf} \Big\},
        \end{align}
    where  $C_{\rm Ahlf}$  and $\eps_{\rm Ahlf}$  are the constants \EEE in (\ref{eqn:AhlforsReg}), so that in particular \eqref{eqn:AhlforsReg} holds on $B(x_0,r_0)$.}
    We fix the constant  $c_A :=  1/(\BBB 128 \EEE C_{\rm Ahlf})$ and define $0 < \delta \le \min\lbrace c_A/(8\bar{C}), 1\rbrace$, where $\bar{C}$ is the constant from Lemma~\ref{lemm: another david}. \BBB Then, \EEE we  further select $0 < \mu \le \delta/4$ such that 
    \begin{align}\label{clear2}
        2c_A + 6 \pi \mu/\delta \leq \frac{1}{32}C_{\rm Ahlf}^{-1}.
    \end{align}
    {The constants $\delta$ and $\mu$  depend  only on $C_{\rm Ahlf}$ and thus on $\mathbb{C}$. (The choice becomes clear below in \eqref{clear}.)}
    Fix any $p\in [1,2)$. Fix $\eps \in (0,1)$ to be chosen sufficiently small later, depending only on $\mu$, $\delta$, $p$, and  $\mathbb{C}$  {(see below in \eqref{clear}, \eqref{eq: covers a large}, and  underneath \eqref{eq: second contra} \BBB and \eqref{unnderneath})\EEE}. 

    \emph{Step 1: Small $p$-elastic energy on some ball.}  As the crack $K$ is Ahlfors-regular in the sense of (\ref{eqn:AhlforsReg}), we may apply Lemmas \ref{lemma: david} and \ref{lemma: cheb} to see that   there are $y \in K \cap B(x_0,r_0/3)$ and $r\in  [c_{\eps}r_0/3,r_0/3] \EEE $ such that 
    \begin{equation}\label{omegappp}
        \omega_p(y,r)  = \left(r^{p/2} \dashint_{B(y,r)} |e(u)|^p\,  {\rm d}x  \right)^{2/p} \leq \eps.
    \end{equation}
     Here, we have also used \eqref{eqn:AhlforsReg2}. \EEE Without loss of generality, we suppose that $y=0$.

    \emph{Step 2: Most balls have boundary away from the crack.}  We show that there exists   $\rho \in (r/2,3r/4)$ such that the union of \emph{circular arcs} 
    \begin{equation}\label{eqn:Zdef}
        A: = \big\{x \in \partial B(0,\rho)\colon \,  \dist(x,K)\leq \delta r\big\}
    \end{equation} has {controlled measure},  namely \EEE
    \begin{equation}\label{eqn:Zsize}
        \mathcal{H}^{1}(A)\leq c_Ar.
    \end{equation}
    Due to Ahlfors-regularity  (\ref{eqn:AhlforsReg}) of $K$, we can apply Lemma \ref{lemm: another david} which in  polar coordinates reads as
    $$\dashint_{r/2}^{3r/4}\mathcal{H}^{1}\big(\big\{z \in \partial B(0,\rho)\colon \, {\rm dist}(z,K)\leq \gamma r\big\}\big) \, \dd \rho \leq  4\bar{C} \gamma \BBB r \EEE \quad \text{{for all $\gamma \in (0,1)$}}.$$ 
    By choosing $\gamma := \delta \le   c_A/(8\bar{C})$, there is $\rho \in (r/2,3r/4)$ such that (\ref{eqn:Zsize}) holds.

    \emph{Step 3: Circular arcs of uniform length away from the crack.}
    We fix $\rho \in (r/2,3r/4)$ as in Step~2 so that (\ref{eqn:Zdef}) and (\ref{eqn:Zsize}) hold.
    {The complement of $A$ in $\partial B(0,\rho)$ is composed of circular arcs {with} distance  at least  $\delta r$ from $K$. In the following procedure, we modify $A$   to make sure that these arcs are {sufficiently large, while still being sufficiently far from each other and the crack $K$.}}   More precisely,      we can lengthen the arcs  $\partial B(0,\rho)\sm A$ \EEE by including a ball of size $r\delta/2$ around each point of $\partial B(0,\rho)\sm A$ {and} define the \emph{good set} 
    \begin{align*}
        A_{\rm g} : = \big\{x\in \partial B(0,\rho)\colon {\rm dist }(x,\partial B(0,\rho)\sm A)< \delta r/2\big\}.
    \end{align*} 
Note that by definition of $A$, each point within $A_{\rm g}$ is at least  distance $\delta r/2$ away from $K$.     The set $A_{\rm g}$ is now composed of open arcs of {length} at least $\delta r$ and in particular, it has at most $2 \pi r /(\delta r) = 2 \pi/\delta$ arcs.
    We then let $A_{\rm b} := \partial B(0,\rho)\sm A_{\rm g}$ and we note that $A_{\rm b} \subset A$. This is called the  \emph{bad set} as points in $A_{\rm b}$ are close to $K$. Since $A_{\rm g}$ is composed of {at most} $2\pi/\delta$ arcs, the same must hold for $A_{\rm b}$.

    {Next, we {modify the sets to ensure} that the arcs of $A_{\rm g}$ are not too close  to  each other.
    For this,  we  fatten each arc of $A_{\rm b}$ by $r\mu/2$ with $\mu \in (0, \delta/4]$ as chosen in \eqref{clear2} and we denote this set by  $A_{\rm b}^\mu$.}
    Thus, $A_{\rm b}^\mu$  is composed of at most $2\pi / \delta$ intervals  of length at least $\mu r$.
  Using {$A_{\rm b} \subset A$} and \eqref{eqn:Zsize} we find 
    \begin{equation}\label{eqn:ZGalpha} 
        \mathcal{H}^1(A_{\rm b}^\mu)\leq c_A r + 2\pi\mu r/\delta.
    \end{equation}
    Lastly, we let $A_{\rm g}^\mu := \partial B(0,\rho) \sm A_{\rm b}^\mu$ and we denote the union of arcs  $A_{\rm g}^\mu$ on $\partial B(0,\rho)$ by   $A_{\rm g}^\mu = \bigcup_i I_i$.
    {Let us summarize the main properties of the arcs $I_i$.
        By construction, \cl{they are relatively open and} their    geodesic distance from each other in $\partial B(0,\rho)$ is at least $\mu r$.
        By definition of $A_{\rm g}$ and the fact that $\mu \le \delta/4$, their    length is at least $\delta r/2$, and  as $A_{\rm g}^\mu \subset A_{\rm g}$, their distance from $K$ is at least $\delta r/2$.}

    \begin{figure}
        \begin{tikzpicture}[x=1cm,y=1cm]
            \draw[very thick](0,0) circle (3.0);
            \draw [red,very thick] plot [smooth] coordinates {(-3.0,0.1)  (-2.5,0.0) (-2,0.2) (0,0)};
            \draw [red,very thick] plot [smooth] coordinates {(0,0) (1,1) (1.5,1.2) (2,1.8) (2.5,1.7)};
            \draw [red,very thick] plot [smooth] coordinates {(0,0) (-0.2,-0.2) (0.5,-0.8) (1.3,-1.5) (1.8,-1.6) (1.9,-2.35)};
            \draw[dotted](0,0) circle (2.0);
            \centerarc[blue,ultra thick](0,0)(30:50:2);
        \centerarc[blue,ultra thick](0,0)(300:320:2);
    \centerarc[blue,ultra thick](0,0)(165:185:2);
\centerarc[](0,0)(50:165:2.3);
\centerarc[](0,0)(185:300:2.3);
\centerarc[](0,0)(-40:30:2.3);
\centerarc[](0,0)(50:165:1.7);
\centerarc[](0,0)(185:300:1.7);
\centerarc[](0,0)(-40:30:1.7);
\draw[rotate=30] (1.7,0) -- (2.3,0);
\draw[rotate=50] (1.7,0) -- (2.3,0);
\draw[rotate=165] (1.7,0) -- (2.3,0);
\draw[rotate=185] (1.7,0) -- (2.3,0);
\draw[rotate=300] (1.7,0) -- (2.3,0);
\draw[rotate=320] (1.7,0) -- (2.3,0);
\node[above left] at (0, 0) {$K$};
\draw[fill](0,0) circle (1pt);
\draw[->] (-3.5,0.5) -- (-2.1,0.4);
\node[left] at (-3.5,0.5) {$A_{\rm b}^\mu$};
\node[] at (2,0) {$D_1$};
\node[] at (-0.58,1.91) {$D_2$};
\node[] at (-0.93,-1.72) {$D_3$};
\end{tikzpicture}
\caption{The above picture helps to  clarify the variety of sets introduced in the proof of Theorem \ref{thm: david}.}
\label{fig:UR}
\end{figure}
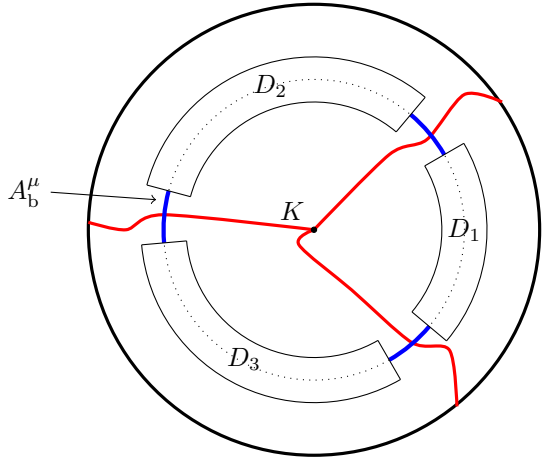

\emph{Step 4: Circular domains for Korn's inequality.} 
We now introduce domains whose size is comparable in size to the whole ball and which do not intersect $K$. On these sets, we can approximate $u$ by  rigid motions. To be precise, we define the \emph{circular domains}   
$$D_i := \big\{t\nu \colon t > 0, \ \nu \in \mathbb{S}^1 \text{ with } |t-\rho|< \delta r \EEE/4 \text{ and } \rho \nu \in I_i \big\} \subset B(0,r),$$
and note that $D_i \cap K = \emptyset$ because the points of $I_i$ are at \ks{least} distance $ \delta r /2$ from $K$.
We also observe that   the width and length of $D_i$ are at least proportional to $\delta r$,  see \EEE Figure~\ref{fig:UR} for an illustration. Recalling \eqref{omegappp},  on each $D_i$, we can apply the Korn-Poincar\'e and Korn inequality to find 
\begin{equation}\label{eqn:KornAnnuli}
    \frac{1}{r^p} \int_{D_i}|u(x) -  (A_i\, x + b_i)|^p \, \dd x + \int_{D_i}|\nabla u -  A_i|^p \, \dd x \leq C_{\rm Korn} \int_{D_i} |e(u)|^p \, \dd x \le C_{\rm Korn}  r^{2-p/2} \eps^{p/2},
\end{equation}
where $A_i \in \R^{2\times 2}_{\rm skew}$  and $b_i  \in \R^2$. Here, the constant $C_{\rm Korn}$ is independent of the exact shape of $D_i$ and only depends on $\delta$. We now show that for two distinct indices  $i$ and $j$ it holds that 
\begin{equation}\label{eqn:domainJump}
    \frac{|b_i-b_j| + r|A_i - A_j|}{\sqrt{r}} \geq c_{\rm jump}
\end{equation}
for some constant $c_{\rm jump}$ chosen small enough (depending only on $\mathbb{C}$, $\mu$, $\delta$, and $p$) in  \eqref{clear} below.
{Inequality (\ref{eqn:domainJump}) says that, in a region of the crack with low elastic energy, the rescaled displacement {$r^{-1/2} u(rx)$} has a jump bounded from below.}

\emph{Step 5: Lower bound on the jump.}
To show that (\ref{eqn:domainJump}) is true, we assume by contradiction that its converse holds for each pair $D_i$ and $D_j$, that is,
\begin{equation}\label{eqn:FalsedomainJump}
    \frac{|b_i-b_j| + r|A_i - A_j|}{\sqrt{r}} \leq c_{\rm jump}.
\end{equation}
We then construct a competitor contradicting the minimality of $(u,K)$ for a sufficiently small constant $c_{\rm jump}.$
We will modify $u$ to construct a displacement $\bar u$ without cracks on the annular ring 
\begin{equation*}
    R:= B\left(0,\rho + \delta r/4\right)\sm \overline{B\left(0,\rho - \delta r/4\right)}.
\end{equation*}
{The key point is that the quantity on the left-hand side of (\ref{eqn:FalsedomainJump}) will control the energetic cost of an interpolation between the rigid motions $A_i x + b_i$ and $A_j x + b_j$.}

\BBB We start by constructing a Sobolev function which interpolates between the rigid motions \ks{appearing} in \eqref{eqn:KornAnnuli}. Suppose without restriction  that there are at least two domains $D_i$ since the other case is even easier, as we briefly indicate below.  \EEE We consider a family of functions $(\varphi_i)_i \in C^{\infty}(R)$ such that $\sum_i \varphi_i = 1$, $0 \leq \varphi_i \leq 1$, $\varphi_i = 1$ on $D_i$ and $\abs{\nabla \varphi_i} \leq C / (\mu r)$. \BBB (Here we use that the    geodesic distance of each pair in $(I_i)_i$ is bounded from below by $\mu r$.) \EEE We can also assume that in the neighborhood of each point $x \in R$, there are at most two functions $\varphi_i$ which are non zero.
Then we set
\begin{equation*}
    \bar{u}(x) := \sum_i \varphi_i(x) a_i(x)
\end{equation*}
and we compute
\begin{equation*}
    e(\bar{u})(x) = \sum_i \nabla \varphi_{\ks{i}}(x) \odot a_i(x) = \sum_i \nabla \varphi_{\ks{i}}(x) \odot (a_i(x) - a_{i_0}(x)),
\end{equation*}
where $i_0$ is a fixed index. By assumption (\ref{eqn:FalsedomainJump}), we can estimate $\abs{e(\bar{u})} \leq C \mu^{-1} c_{\rm jump} r^{-1/2}$ in $\BBB R \EEE $, and thus
\begin{equation*}
   \BBB  \int_{R} \EEE \abs{e(\bar{u})}^2 \dd{x} \leq C_{\mu} c_{\rm jump}^2 r,
\end{equation*}
where in the following we denote by $C_{\mu}$ generic constants that \BBB depend \EEE on $\mu$ (and thus on $\delta$), and possibly also on $\mathbb{C}$ and $p$.     \BBB Clearly, the argument still works if there is only one set $D_1$.

 By Korn's inequality on $R$ and an extension result in $W^{1,2}$, we can extend $\bar{u}$ to a function in $W^{1,2}(B(0,r);\R^2)$, still denoted by $\bar{u}$, such that
\begin{equation}\label{eq_EL1}
  \int_{B(0,r)}  \abs{e(\bar{u})}^2 \dd{x} \leq C_{\mu} c_{\rm jump}^2 r.
\end{equation}
The Korn-Poincaré inequality (\ref{eqn:KornAnnuli}) allows \ks{us} to choose a radius $s$ such that $\abs{\rho - s} < \delta r/4$, $u - \bar{u} \in W^{1,p}(\partial B(0,s)\ks{\cap D};\R^2)$,    and
\begin{equation}\label{eq_el}
    \frac{1}{r^p} \int_{\partial B(0,s) \cap D} \abs{u - \bar{u}}^p \dd{x} + \int_{\partial B(0,s) \cap D} \abs{\nabla u - \nabla \bar{u}}^p \dd{x} \leq \cl{2} C_{\rm korn} r^{1-p/2} \varepsilon^{p/2}\ks{/\delta},
\end{equation}
where \BBB we set \EEE $D := \bigcup_i D_i$. Now, we make an extension of $u - \bar{u}$ from $\partial B(0,s) \cap D$ (which \ks{has positive distance from} $K$) to the whole circle $\partial B(0,s)$.
We consider for each $i$ a slightly smaller arc $\widehat{I}_i \subset I_i$ obtained by narrowing $I_i$ by a length $\mu r/10$ and define correspondingly
$$\widehat{D}_i := \big\{t\nu \colon t > 0, \ \nu \in \mathbb{S}^2 \text{ with } |t-\rho|<\delta r/4 \text{ and } \nu\in \mathbb{S}^1 \text{ with } \rho \nu \in \widehat{I}_i \big\}$$
and $\widehat{D}  := \EEE \bigcup_i \widehat{D}_i$.
As there are at most $2 \pi / \delta$ arcs in the family $(\widehat{I}_i)_i$, the complement of the arcs $\widehat{I}_i$   in $\partial B(0,\rho)$ is only slightly larger than (\ref{eqn:ZGalpha}), namely
    $\HH^1 (\partial B(0,\rho) \setminus \bigcup_i \widehat{I}_i ) \ \leq c_A r + 3 \pi \mu r/\delta$, \BBB where we use $\mu \le \delta/4$. \EEE
Since $\partial B(0,s) \setminus \widehat{D}$ is \ks{equal to}  $\partial B(0,\rho) \setminus \bigcup_i \widehat{I}_i$ \ks{scaled by a constant smaller than $2$}, we also have
\begin{equation}\label{eq_ES}
    \HH^1(\partial B(0,s) \setminus \widehat{D}) \leq 2 c_A r + 6 \pi \mu r/\delta.
\end{equation}
Then we let $\varphi \in C^1(\partial B(0,s))$ be such that $0 \leq \varphi \leq 1$,
\begin{equation*}
    \text{$\varphi = 1$ on $\partial B(0,s) \cap \widehat{D}$} \quad \text{and} \quad \text{$\varphi = 0$ on $\partial B(0,s) \setminus D$}
\end{equation*}
and $\abs{\nabla \varphi} \leq C / (\mu r)$.
Hence, the function $f := \varphi (u - \bar{u})$ belongs to $W^{1,p}(\partial B(0,s);\R^2)$, and since
\begin{equation*}
    \abs{\nabla f} \leq \abs{\varphi} \abs{\nabla u - \nabla \bar{u}} + \abs{\nabla \varphi} \abs{u - \bar{u}},
\end{equation*}
it follows from (\ref{eq_el}) that
\begin{equation*}
    \int_{\partial B(0,s)} \abs{\nabla f}^p \dd{x} \leq C_{\mu} r^{1-p/2} \varepsilon^{p/2}.
\end{equation*}
By \cite[Lemma 22.16]{David}, there exists a function $v \in \BBB W^{1,2} \EEE (B(0,s);\R^2)$ which coincides with $f$ on $\partial B(0,s)$ such that
\begin{equation}\label{eq_EL2}
    \int_{B(0,s)} \abs{\nabla v}^2 \dd{x} \leq C r^{2-2/p} \left(\int_{\partial B(0,s)} \abs{ \nabla \EEE f}^p \dd{x}\right)^{2/p} \leq C_{\mu} r \varepsilon.
\end{equation}
We can finally define $(\hat{u},\widehat{K})$ as
\begin{equation*}
    \hat{u}(x) = 
    \begin{cases}
        v(x) + \bar{u}(x) &\text{in} \ B(0,s)\\
        u(x)              &\text{in} \ \Omega \setminus B(0,s)
    \end{cases}
\end{equation*}
and
\begin{equation*}
    \widehat{K} = (K \setminus B(0,s)) \cup (\partial B(0,s) \setminus \widehat{D}).
\end{equation*}
From (\ref{eq_EL1}) and (\ref{eq_EL2}), we estimate
\begin{equation*}
    \int_{B(0,s)} \abs{e(\hat{u})}^2 \dd{x} \leq C_{\mu} (c_{\rm jump}^2 + \varepsilon) r.
\end{equation*}
Using the almost-minimality property \eqref{amin}  to compare $(u,K)$ and $(\hat{u},\widehat{K})$ in $B(0,r)$, we find \BBB by \eqref{eq_ES} \EEE
\begin{equation}\label{clear0}
    \HH^1\big( K \EEE \cap B(0,s)\big) \leq \big(2c_A + 6 \pi \mu/\delta + C_{\mu}  (c_{\rm jump}^2 + \varepsilon ) + h(r) \big) r.
\end{equation}
Then, recalling {\eqref{eq:h000} and} \eqref{clear2}, we can choose  $\eps$ and $c_{\rm jump}$ small enough depending only on $\mathbb{C}$, $\mu$, $\delta$,  $p$, and $C_{\rm Ahlf}$ (which itself depends only on $\mathbb{C}$)  such that {we have}  
    \begin{equation}\label{clear}
        2c_A + 6 \pi \mu/\delta +  C_{\mu} ( \eps + c^{2}_{\rm jump})  + h(r) \leq \frac{1}{8}C_{\rm Ahlf}^{-1}.
    \end{equation}
    Recalling that $s \geq r/4,$ we apply \eqref{clear0}--\eqref{clear} and (\ref{eqn:AhlforsReg}) to find the contradiction $\frac{1}{4}C_{\rm Ahlf}^{-1}r \leq \frac{1}{8}C_{\rm Ahlf}^{-1} r. $ This contradiction immediately rules out the case of a single set $D_1$ \EEE \cl{and shows that} we  must have that (\ref{eqn:domainJump}) holds for a sufficiently small constant $c_{\rm jump}$ depending on  $p$,  $C_{\rm Ahlf}$,  and $\mathbb{C}$. 
    In the next steps, the constant $c_{\rm jump}$ is fixed, but $\varepsilon$  will still be chosen {sufficiently} small.

    \emph{Step 6: Constructing a partition via piecewise Korn.}
    In the above steps, we have shown a separation of values in the sense of (\ref{eqn:domainJump}) and may, up to relabeling, take $D_i$ and $D_j$ in (\ref{eqn:domainJump}) to be $D_1$ and $D_2$. In this second part, we show that the crack nearly separates $D_1$ and $D_2$,   thereby completing the \BBB proof. \EEE

    In the rest of the proof, we work   in the unit ball  $B(0,1)$. \EEE Precisely, rescaling in $B(0,1)$ via $u_{r}(x) : = r^{-1/2} u(rx)$ and $K_r := r^{-1} K$, we  let \EEE $D_{r,1}$ and $D_{r,2}$  be \EEE circular domains  coming from a change of variables with $\mathcal{L}^2(D_{r,i})\ge c_{\rm size}$, $i=1,2$, with a constant depending on $\delta $   and thus  on $C_{\rm Ahlf}$.  The Korn inequalities (\ref{eqn:KornAnnuli}) hold with $A_i$, $b_i$, and $D_i$ replaced by their appropriately rescaled versions $A_{r,i} : = \sqrt{r}A_i$, $b_{r,i} : = b_i/\sqrt{r}$, and $D_{r,i}$. Our goal is to find a    compact, connected set $ \Gamma_r$ satisfying
    \begin{align}\label{eq: to find}
        \mathcal{H}^1(\Gamma_r) \le \bar{C}_{\rm ur} \quad \quad  \text{and} \quad \quad \mathcal{H}^1\big(K_r \cap \Gamma_r \cap B(0,1)   \big) \ge \frac{1}{\bar{C}_{\rm ur}}  
    \end{align}
    for some $\bar{C}_{\rm ur} \geq 1$ only depending on $C_{\rm Ahlf}$ (and thus on $\mathbb{C}$).  Then, we conclude the property stated in Theorem \ref{thm: david} on the ball $B(x_0,r_0)$ by setting $\Gamma = r \Gamma_r$,  recalling   \cl{$r \in [c_\eps r_0/3,r_0/3]$} \EEE (see before \eqref{omegappp}), and using  the fact that $c_\eps$ only depends on $C_{\rm Ahlf}$, $p$, and $\mathbb{C}$, and therefore only on $\mathbb{C}$. 

    From now on, for convenience and with abuse  of notation, we drop the subscript $r$ from all quantities. In particular, we have that both inequalities  (\ref{eqn:KornAnnuli}) and (\ref{eqn:domainJump}) hold with $r = 1.$ In the unit ball, we apply the \BBB piecewise Korn \EEE  inequality (Proposition \ref{thm:refinedKorn}) on the set $B(0,1)$ (with $J_u = K$)  to find $u^\theta$ such that
    \begin{align}\label{along with}
        {\mathcal{L}^2(\{u\neq u^\theta\}) \leq {C} \theta  }
    \end{align}
    and a finite Caccioppoli partition $(P_j)_{j=0}^{J}$ (with $P_0 = \{u \neq u^\theta\}$), such that there are rigid motions $\tilde a_j(x) = \tilde A_j x + \tilde b_j$ satisfying
    \begin{align}
        \sum\nolimits_{j=0}^J \mathcal{H}^1\big(\partial^* P_j \cap (B(0,1) \sm K)\big) &\leq {C} \theta, \label{eqn:partitionAwayFromK} \\
        \hspace{-0.2cm}\| u^\theta  - \sum\nolimits_{j=0}^J \tilde a_j \chi_{P_j}\|_{L^\infty(B(0,1))} + \|\nabla u^\theta - \sum\nolimits_{j=0}^J \tilde A_j \chi_{P_j}\|_{L^1(B(0,1))}&\leq C_\theta \|e(u)\|_{L^p(B(0,1))} \leq C_\theta \sqrt{\eps} \label{eqn:piecewiseKorn},
    \end{align}
    where $\eps$ comes from \eqref{omegappp} and the rescaling of $u$, ${C>0}$ is a constant depending only on the  Ahlfors constant $C_{\rm Ahlf}$, and $C_\theta$  depends on $\theta$ with $C_\theta \to \infty$ as $\theta \to 0$.   We can choose $\theta(\eps)$ depending on $\eps$ with $\theta(\eps) \to 0$ as $\eps \to 0$ and {$C_{\theta(\eps)} \eps^{1/2} \leq \eps^{1/4}$.}

    We now show that for sufficiently small $\eps$ and each $i = 1,2$ there is \BBB an \EEE index $j_i \in \lbrace 1,\ldots, J \rbrace$ such that
    \begin{align}\label{eq: covers a large}
        \mathcal{L}^2\big(D_i \cap P_{j_i}\big)\geq  \mathcal{L}^2\big(D_i\big)/2  \ge  c_{\rm size}/2.
    \end{align}
    \BBB where we recall $\mathcal{L}^2\big(D_i\big)  \ge  c_{\rm size}$, as stated before \eqref{eq: to find}. \EEE     
    We assume to the contrary, {i.e.}, $\mathcal{L}^2\big(D_i \cap P_j\big)  <   \mathcal{L}^2\big(D_i\big)/2$ for all \BBB  $j \in \lbrace 0,\ldots,J\rbrace$ \EEE and we deduce from the relative isoperimetric inequality in $D_i$ (which is bi-Lipschitz equivalent to a ball) that, for each $P_j$ in the partition, we have
    $$\mathcal{L}^2(P_j \cap D_i)^{1/2}\leq C_{\rm iso}\mathcal{H}^1(\partial^*P_j \cap D_i), $$
    where $C_{\rm iso}$ depends only on the dimension and the bilipschitz equivalence (and therefore {the} Ahlfors density constant). Putting these inequalities together, we find by \eqref{eqn:partitionAwayFromK} and $D_i \cap K = \emptyset$ that
    $$\mathcal{L}^2(D_i)^{1/2}= \mathcal{L}^2\big(\bigcup\nolimits_j P_j \cap D_i\big)^{1/2} \le \sum\nolimits_j \mathcal{L}^2(P_j \cap D_i)^{1/2} \leq C_{\rm iso}\sum\nolimits_j\mathcal{H}^1(\partial^*P_j \cap D_i)\leq C_{\rm iso} \theta(\eps). $$ 
    For sufficiently small $\eps$, this is a contradiction to $ \mathcal{L}^2(D_i) \ge c_{\rm size}$. \BBB We thus conclude \eqref{eq: covers a large}, first for some index  $j_i \in \lbrace 0,\ldots,J\rbrace$, and then excluding $j_i=0$ by \eqref{along with}. \EEE

    We now show that $j_1 \neq j_2.$ Note that $j_i \neq 0$, so $u = u^\theta$ on the elements $P_{j_i}$ of the partition. Suppose by way of contradiction that $j = j_1 = j_2.$
    Precisely, by (\ref{eqn:piecewiseKorn}), \eqref{eq: covers a large},  and $ C_{\theta(\eps)} \eps^{1/2} \BBB \le \EEE \eps^{1/4}$, we have for $i=1,2$ that 
    \begin{align*}
        \eps^{1/4}& \geq \|\nabla u - \tilde A_j \|_{L^1(D_i \cap P_j)}   \geq \|\tilde A_j - A_i\|_{L^{1}(D_i \cap P_j)} - \|\nabla u -A_i\|_{L^1(D_i)}  \geq \frac{c_{\rm size}}{2} |\tilde A_j - A_i| - C_{\rm Korn} \eps^{1/2},
    \end{align*}
    where we have also used (\ref{eqn:KornAnnuli}) \cl{(rescaled to the unit ball)} with H\"older's inequality.  Rearranging,   we have \EEE 
    \begin{equation}\label{eqn:contraRgap}
        |A_1- A_2| \leq |A_1 - \tilde A_j | + |\tilde A_j - A_2|\leq C_* \eps^{1/4},
    \end{equation}
    where $C_*$ depends only on $c_{\rm size}$ and $C_{\rm Korn}$, and thus only  on \EEE $C_{\rm Ahlf}$. We now control the gap between $b_1$ and $b_2$ to contradict (\ref{eqn:domainJump}) for sufficiently small $\eps$. Up to a constant (depending as usual on the Ahlfors density), we apply the previous two estimates to (\ref{eqn:piecewiseKorn}) and use (\ref{eqn:KornAnnuli}) (again with H\"older's inequality) as well as \eqref{eq: covers a large}  to find for $i=1,2$
    \begin{align}\label{eq: second contra}
        \eps^{1/4} &\geq \|u - \tilde a_j\|_{L^1(D_i \cap P_j)}  \geq \|b_i - \tilde b_j\|_{L^1(D_i \cap P_j)} - C_*\eps^{1/4} - \| u - (A_i\, \cdot + b_i)\|_{L^1(D_i\cap P_j)}\notag \\
                   & \geq \frac{c_{\rm size}}{2}|b_i - \tilde b_j| - C_*\eps^{1/4} - C_{\rm Korn} \eps^{1/2}.
    \end{align}
    Increasing $C_*$, as before, we have
    $|b_1 - b_2 | \leq C_*\eps^{1/4}.$
    For small enough $\eps$, this inequality and (\ref{eqn:contraRgap}) contradict (\ref{eqn:domainJump}).  Therefore, it holds that \EEE   $j_1 \neq j_2$.

    \emph{Step 7: The crack almost separates the circular domains.}
    We now form a new partition by including all of $D_i$ in $P_{j_i}$. For this, we introduce $$P_{i,j} := D_i \cap P_{j} \quad \text{for all $j=0,\ldots,J$ {and $i=1,2$}}.$$ 
    We note that for each  $j \neq j_i$ {we have $P_j \cap P_{j_i} = \emptyset$,} so $\mathcal{L}^2(D_i \cap P_{j})\le   \mathcal{L}^2(D_i)/2$ by \eqref{eq: covers a large}, and then Lemma~\ref{lemma: maggi} yields, up to a constant $\hat{C}_1$ depending  on the Lipschitz constant of $D_i$ (and thus depending only on $\delta$), \EEE  
    \begin{equation}\label{eqn:Pijcontrol}
        \mathcal{H}^1(\partial^* P_{i,j}) \leq \hat{C}_1 \mathcal{H}^1(\partial^* P_j \cap D_i) = \hat{C}_1 \mathcal{H}^1\big(\partial^* P_j \cap (D_i \setminus K)\big),
    \end{equation}
     where \EEE the second step follows from  $K \cap D_i = \emptyset$. We define a new partition  $(\tilde{P}_j)_{j=0}^{J}$  by distinguishing three cases. {We define the two} sets
    $$\tilde P_{j_1} : = (P_{j_1} \setminus D_2) \cup \bigcup\nolimits_{j\neq j_1} P_{1,j},\quad \quad \quad \quad \tilde P_{j_2} : = (P_{j_2} \setminus D_1) \cup \bigcup\nolimits_{j\neq j_2} P_{2,j}, $$
    and  for \EEE {the remaining} $j \ne j_1,j_2$ we define
    $$\tilde P_j = P_j\sm (\tilde P_{j_1} \cup \tilde P_{j_2}) .$$
     Without loss of generality, we may suppose $\mathcal{L}^2(\tilde P_{j_1}) \leq \frac{1}{2}\mathcal{L}^2(B(0,1))$, otherwise we can use $\tilde P_{j_2}$ in its place. \EEE  
    {Notice that $\tilde P_{j_1}\supset D_1$ and that, by (\ref{eqn:partitionAwayFromK}), (\ref{eqn:Pijcontrol}) and $j_1 \ne j_2$,  we find 
        \begin{align}\label{eq: j1}
            \HH^1\big(\partial^* \tilde{P}_{j_1} \cap (B(0,1) \setminus K)\big) &\leq \sum_{j \ne j_1} \HH^1(\partial^* P_{1,j}) + \HH^1\big(\partial^* P_{j_1} \cap (B(0,1) \setminus K) \big) + \HH^1(\partial^* (P_{j_1} \cap D_2))  \notag \\
                                                                  &  \leq \hat{C}_1    \sum\nolimits_{j} \HH^1\big(\partial^* \BBB {P}_{j} \EEE \cap (B(0,1) \setminus K)\big) \leq \hat{C_1} \theta(\varepsilon)
        \end{align}
    for {a} bigger constant  $\hat{C_1} \geq 1$, \BBB where we used that $P_{j_1} \cap D_2 = P_{2,j_1}$.\EEE} 
    Note that $\tilde P_{j_1}$ can be written as the disjoint union of indecomposable sets of finite perimeter, and only one of these contains $D_1$. Denote this component by $Q$. As $Q$ is indecomposable, up to saturating the set {\cite[Definition 5.2]{Ambrosio-Morel},} {its} reduced boundary is essentially closed and connected (precisely, a Jordan curve {by the structure theorem of the boundary of planar sets of finite perimeter, see  \cite[Corollary 1]{Ambrosio-Morel} and}  Section~\ref{korn-sec}). \EEE 
    As \cl{$\mathcal{L}^2(\tilde P_{j_1}) \leq \frac{1}{2}\mathcal{L}^2(B(0,1))$ and} $\overline{\partial^* Q} $ necessarily separates $D_1$ from $D_2$ in $B(0,1)$, we have that $\mathcal{H}^1(\partial^* Q \cap B(0,1))\geq c_{\rm dim},$ where  $c_{\rm dim}$ depends on the diameter of $D_1$ \BBB (cf.\ Lemma \ref{lemma: diam} \ks{ and \ref{lemma: maggi}}) \EEE and thus only  on \EEE  $C_{\rm Ahlf}$. Consequently, as $\partial^* Q \subset \partial^* \tilde P_{j_1}$ up to a null set,   by \eqref{eq: j1} we get
\begin{align}\label{unnderneath}
\mathcal{H}^1\big((\partial^* Q \cap K) \cap B(0,1)\big) \geq \mathcal{H}^1\big( \partial^* Q \cap B(0,1)\big) - \mathcal{H}^1\big((\partial^* Q \sm K) \cap B(0,1)\big) \geq c_{\rm dim} - \hat{C}_1 \theta(\eps).
\end{align}
    Clearly, {{as} $\partial^* Q \subset \partial^* \tilde P_{j_1}$}, by \eqref{eq: j1} and \eqref{eqn:AhlforsReg} for $K$ we also find $\mathcal{H}^1(\partial^* Q \cap B(0,1))\le \hat{C}_2$ for $\hat{C}_2$ depending on $C_{\rm Ahlf}$. Choosing $\eps$ sufficiently small,  we see that the  compact, connected set $ \Gamma :=  \overline{\partial^* Q}\EEE$ satisfies \eqref{eq: to find} (recall that we dropped \BBB the \EEE subscript $r$), which concludes the proof.
\end{proof}

We close this section with an important consequence of local uniform rectifiability: it allows to initialize $\beta$ in many balls  which will be instrumental for the porosity property proved in Section \ref{sec: main result} below.

According to \cite[Theorem 41.3]{David}, if $\Gamma \subset \R^2$ is an Ahlfors-regular curve with constant $C > 0$ (recall \eqref{eq_z1} and \eqref{eq_z2}), there exists a constant $C_0 \geq 1$ (depending on the aforementioned $C$) such that, for all $x \in \Gamma$ and $r > 0$,
\begin{equation*}
    \int_{\Gamma \cap B(x,r)} \Big( \int_0^r \beta_{\Gamma}^2(y,t)  \frac{\dd{t}}{t} \Big)  \dd{\HH^1(y)} \leq C_0 r.
\end{equation*}
Let us fix $x \in K$ and $r>0$ such that $B({x},2r) \subset \Omega$   and $h({2}r) \leq \varepsilon_{\rm ur}$.    {Then by Theorem \ref{thm:UniformRect} and the definition of local uniform rectifiability in \eqref{eq_defi_ur},} we get that $K \cap B(x,r)$ is contained in an Ahlfors-regular curve {$\Gamma$} with constant ${C_{\rm ur}} > 0$. Therefore, {as $\beta_K\leq \beta_\Gamma$ for balls contained in $B(x,r)$,} there  exists a constant $C_0 \geq 1$ {(depending on {$C_{\rm ur}$})} such that  
\begin{equation}\label{eq: 5.28}
    \int_{K \cap B(x,{r/2})} \Big( \int_0^{{r/2}} \beta_{K}^2(y,t)  \frac{\dd{t}}{t} \Big) \dd{\HH^1(y)} \leq C_0 r.
\end{equation}
Given $\eps>0$, in view of Lemma \ref{lemma: cheb}, this allows us to find  $y \in K \cap B(x, r/2) \EEE$ and $t>c_{\eps}  r/2 \EEE $ such that  $\beta_{K}^2(y,t)\le \eps$. This will be a crucial property in the next section.

\section{Dimension of the singular set of  \texorpdfstring{$K$}{K}: Proof of  Theorem \texorpdfstring{\ref{th: main result}}{2.4}\EEE}\label{sec: main result}

This section is devoted to the proof of Theorem \ref{th: main result}. The proof fundamentally relies on a porosity property which we discuss in Subsection \ref{sec: porosity}. The latter also requires a suitable initialization of the jump whose detailed treatment is postponed to Subsection \ref{sec: look at jumo}.

\subsection{Porosity and proof of the statement} \label{sec: porosity}

We investigate the dimension of the set $K^*$ of points $x \in K$ where $K$ is nonsmooth. We are going to show that $K^*$ has a dimension less than $1$ relying on the fact that $K^*$ is a porous set, i.e., it has many holes in a quantified way. In other words, we will prove that $K$ is smooth in many places. {This property will also imply the higher integrability of the gradient, following the argument of \cite{DPF}.}

Our approach {for} porosity is close to {\sc Rigot} \cite{Rigot} (see also \cite[Section 51]{David}), yet with some important differences. Rigot's approach relies on the standard Mumford--Shah epsilon-regularity theorem which requires $ \beta \EEE + \omega_2$ to be small in order for $K$ to be smooth.
In this case, the porosity is proved by finding many balls where $ \beta \EEE  + \omega_2 \leq \varepsilon$.
The procedure is in two steps:  First, one finds many balls where $ \beta \EEE  + \omega_p \leq \varepsilon$, where $p$ is some fixed exponent in $(1,2)$. Next, one proves a kind of reverse Hölder inequality (with additional error terms) to initialize $\omega_2$ using $\beta + \omega_p \leq \varepsilon$.
In contrast,  our epsilon-regularity theorem requires {us} to control $\beta + J^{-1}$. We thus  have the same first step but we then use  the lemma below to  initialize $J^{-1}$ using $\beta + \omega_p \leq \varepsilon$.

\begin{lemma}[Initializing $J$]\label{lem_init_J}
    Let $p \in (4/3,2]$ and $(u,K)$ be a Griffith almost-minimizer \BBB  with respect to a gauge $h$. \EEE Let $x_0 \in K$ and $r_0 > 0$ be such that $B(x_0,r_0) \subset \Omega$.
    For all $C_0 > 0$, there exists $\varepsilon_0 \in (0,1/2]$ and $\gamma \in (0,1/2)$,  both {depending} only on $C_0$ and $\mathbb{C}$,  such that, if
    \begin{equation*}
        \beta(x_0,r_0) +  \omega_p(x_0,r_0) + h(r_0) \leq \varepsilon_0,
    \end{equation*}
    then there exists $y \in B(x_0,r_0/2) \cap K$  with
    \begin{equation}\label{CCC0}
        {J}(y,\gamma r_0) \geq C_0.
    \end{equation}
\end{lemma}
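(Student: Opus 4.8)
The plan is to argue by contradiction and rigidity. Suppose $\beta(x_0,r_0) + \omega_p(x_0,r_0) + h(r_0) \le \varepsilon_0$ but $J(y,\gamma r_0) < C_0$ for every $y \in B(x_0,r_0/2) \cap K$. After rescaling (Remark \ref{rem: normalization}) we may take $x_0 = 0$, $r_0 = 1$. Since $\beta(0,1) \le \varepsilon_0 \le 1/2$, the crack $K \cap B(0,1)$ lies in a thin strip about a line $\ell$ through $0$ with normal $\nu$, and the two flat regions $D^\pm := D^\pm_{\beta(0,1)}(0,1)$ are free of $K$, occupy a definite fraction of $B(0,1)$, and are bi-Lipschitz to half-balls with controlled constants. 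On each $D^\pm$ Korn–Poincaré gives a rigid motion $a^\pm(y) = A^\pm y + b^\pm$ with $\int_{D^\pm}|\nabla u - A^\pm|^2 \le C\,\omega_2(0,1) \le C\varepsilon_0$ (using \eqref{eqn:AhlforsReg2} to pass from $\omega_p$ to an $L^2$-type bound, or working directly with the $L^p$ quantity and Hölder). These $A^\pm, b^\pm$ are, up to a controlled error, the quantities entering the definition \eqref{def:normalizedJump} of $J$.

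The core step is to show that if $J$ is small on $B(0,1)$ — hence on all the small balls $B(y,\gamma)$ centered along $K$ — then $A^+ = A^-$ and $b^+ = b^-$ up to $O(\varepsilon_0)$, which is the same mechanism already exploited in Step 5 of the proof of Theorem \ref{thm: david}: interpolate between $a^+$ and $a^-$ across the thin strip containing $K$ using a cutoff whose gradient is $O(\beta^{-1})$, extend, patch along a good circle $\partial B(0,s)$ avoiding $K$ (which has small $\mathcal H^1$-measure there since $K$ is nearly flat), and compare with $(u,K)$ via almost-minimality \eqref{amin}. If $|A^+ - A^-| + |b^+ - b^-|$ were not small, the interpolation competitor would remove a definite length of crack at bounded elastic cost, contradicting Ahlfors regularity \eqref{eqn:AhlforsReg} exactly as in \eqref{clear0}–\eqref{clear}. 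Hence $|A^+ - A^-| + |b^+ - b^-| \le C\sqrt{\varepsilon_0}$ (or a suitable power). But then, for the fixed center $x_0=0$ and radius $1$, the jump functional $J(0,1) = \inf_{y \in K\cap B(0,1)}|(A^+-A^-)y + (b^+-b^-)|/1 \le |A^+-A^-| + |b^+-b^-| \le C\sqrt{\varepsilon_0}$, and by Remark \ref{rem: vary} (varying centers, with $\beta(0,1)$ small enough that $\beta(0,1) \le 1/(16\gamma)$) the analogous bound $J(y,\gamma) \le C_\gamma \sqrt{\varepsilon_0}$ holds for every $y \in B(0,1/2)\cap K$. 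Choosing $\varepsilon_0$ small enough that $C_\gamma\sqrt{\varepsilon_0}$ is far below $C_0$ — wait, this is the wrong direction.

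Let me correct the logic: the contradiction should instead run the other way. Assume $\beta + \omega_p + h \le \varepsilon_0$ and suppose \eqref{CCC0} fails for \emph{all} $y \in B(0,1/2)\cap K$ and \emph{all} admissible $\gamma$, i.e. $J(y,\gamma) < C_0$. We want to derive that the crack is forced to carry small elastic energy relative to its length in a way that, combined with the interpolation competitor above, violates Ahlfors lower regularity. Concretely: smallness of $J(y,\gamma)$ for the centers $y$ along $K$ means the rigid motions on the two sides of $K$ near $y$ differ, after pairing with points of $K$, by at most $C_0\sqrt{\gamma}$; propagating this with Lemma \ref{lem_aA} and Remark \ref{rem: vary} back to the scale-$1$ motions $a^\pm_{0,1}$ gives $|a^+_{0,1}(y) - a^-_{0,1}(y)| \le C(C_0)\sqrt{\gamma} + C\,\omega_p(0,1)^{1/4}$ for $y \in K \cap B(0,1)$; feeding this into the interpolation-and-patch competitor of Step 5 of Theorem \ref{thm: david} removes $\mathcal H^1(K \cap B(0,1/4)) \gtrsim 1$ worth of crack at elastic cost $\le C(C_0)\gamma + C\sqrt{\varepsilon_0} + h(1)$, which for $\gamma$ and $\varepsilon_0$ small (depending on $C_0$, $\mathbb C$) contradicts \eqref{eqn:AhlforsReg}.

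The main obstacle — and the place requiring genuine care — is the bookkeeping that converts smallness of the \emph{normalized jump} $J$ (an infimum over $y\in K$ of $|(A^+-A^-)y + (b^+-b^-)|$) into smallness of the rigid-motion difference $a^+ - a^-$ \emph{as functions on the whole annular region} $R$ where the interpolation takes place. Since $K$ lies in a strip of width $2\beta$, the points of $K$ only pin down $a^+ - a^-$ along (almost) a line; controlling it on the full two-dimensional region $R$ costs a factor $\beta^{-1}$, which is why one needs $\beta$ genuinely small (not just $\le 1/2$) and why the cutoff gradient bound $|\nabla\varphi| \le C\beta^{-1}$ appears. One must therefore track the interplay between $\beta$, $\gamma$, $\omega_p$, and $C_0$ carefully, choosing first $\gamma$ small (absorbing the $C_0$-dependence), then $\varepsilon_0$ small relative to $\gamma$, so that every error in the competitor construction is a small fraction of $C_{\rm Ahlf}^{-1}$. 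The rest — Korn–Poincaré on the bi-Lipschitz half-balls $D^\pm$, the choice of a good radius $s$ with $\mathcal H^1(K \cap \partial B(0,s))$ small, the $W^{1,2}$ extension from the circle via \cite[Lemma 22.16]{David} — is routine and can be quoted essentially verbatim from the proof of Theorem \ref{thm: david}.
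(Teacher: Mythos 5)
Your proposal is genuinely different in structure from the paper's, which goes via an auxiliary quantity and a constructive recentering argument rather than a direct contradiction from smallness of $J$. The paper first proves (Lemma \ref{lem:forcedJump}, via essentially the competitor you describe) that the \emph{affine} jump
$J_{\rm aff}(x_0,r_0) = r_0^{-1/2}\bigl(|a^+_{x_0,r_0}(x_0)-a^-_{x_0,r_0}(x_0)| + r_0|A^+-A^-|\bigr)$
is bounded below by a universal $c_{\rm jump}$. This quantity controls the affine difference $a^+-a^-$ as a function on the whole ball, which is exactly what the interpolation competitor needs. The difficulty that Lemma \ref{lem_init_J} then has to solve is that $J_{\rm aff}\geq c_{\rm jump}$ does \emph{not} imply $J\geq c$: since $J$ is an infimum over $y\in K$ of $|(A^+-A^-)y+(b^+-b^-)|$, one can perfectly well have $A^+\neq A^-$ together with a zero of the affine map on $K$. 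The paper handles this with a case distinction: either there is no near-zero on $K\cap B(x_0,r_0/4)$ (done), or there is one at $\bar y$; then $|A^+-A^-|\gtrsim c_{\rm jump}/\sqrt{r_0}$, and since in $\R^2$ a skew-symmetric $A$ satisfies $|Av|=\tfrac{1}{\sqrt2}|A||v|$ for every $v$, one recenters at a $y_0\in K$ far from $\bar y$ (Ahlfors regularity provides one) so that the affine map is uniformly bounded below on $K\cap B(y_0,\bar c r_0)$. Finally, Step 3 shrinks the radius to $\gamma r_0$, using Lemma \ref{lem_aA} to show the rigid motions barely change, so that $J(y_0,\gamma r_0)\sim \gamma^{-1/2}$ blows up past any prescribed $C_0$.

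The genuine gap in your plan is the claim that controlling $a^+-a^-$ on the full two-dimensional region $R$ from its smallness on (a neighborhood of) $K$ ``costs a factor $\beta^{-1}$'' coming from the cutoff gradient. That is the wrong mechanism, and in fact misleading: a cutoff of gradient $O(\beta^{-1})$ supported in a strip of width $O(\beta)$ would produce an elastic cost $\sim\beta^{-1}|a^+-a^-|^2$, which is \emph{worse}, not better, as $\beta\to 0$. What actually makes the step go through is linear algebra, not thin-set cutoffs: because $A^+-A^-$ is $2\times 2$ skew-symmetric, $|(A^+-A^-)v|=\tfrac{1}{\sqrt2}|A^+-A^-||v|$, so if the affine map is small at two points of $K$ at distance $\gtrsim 1$ (which exist by Ahlfors lower regularity), then both $|A^+-A^-|$ and $|b^+-b^-|$ are small, hence $a^+-a^-$ is small everywhere on $B(0,1)$. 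This is precisely the ``rigidity of the affine maps'' that the paper flags immediately after the statement of Lemma \ref{lem_init_J} as the key device, and it is entirely absent from your proposal; without it, your contradiction argument stalls exactly at the step where you try to pass from ``$J(y,\gamma)<C_0$ at every $y$'' to the interpolation competitor. A secondary omission is that your account never mentions the $t^{-1/2}$ normalization in $J$: in the paper's constructive route this scaling (Step 3) is what amplifies a universal lower bound to the arbitrary $C_0$, while in your contradiction route it enters implicitly through the choice $\gamma\lesssim C_0^{-2}$, and this interdependence between $\gamma$, $\varepsilon_0$, and $C_0$ (together with the $\gamma$-dependence of the constant in Remark \ref{rem: vary}) needs to be tracked explicitly, not waved through with ``choose $\gamma$, $\varepsilon_0$ small''.
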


{We defer the proof of the above lemma to the following subsection. We remark that the initalization of a similar quantity in the case of the Mumford--Shah functional, which penalizes {translations}, would be relatively straightforward. In contrast, we must explicitly use the rigidity of the affine maps appearing in the jump \eqref{def:normalizedJump} (rigid in the sense that  for skew-affine $A$ and any $v \in \R^2$, one has $|Av| = \frac{1}{\sqrt{2}}|A||v|$)  to overcome the loss of control resulting from frame indifference.}

With this at hand, we can formulate the porosity property. 

\begin{proposition}[Porosity]\label{prop_porosity}
    For each choice of exponent $\alpha \in (0,1)$, there exist  constants $\varepsilon_{por} \in {(}0,1/2]$ and $\gamma_{\rm por} \in (0, 1/2) \EEE $ (depending on $\mathbb{C}$ and $\alpha$) such that the following holds. {Let $(u,K)$ be an}   almost-minimizer  for the Griffith energy \eqref{eq: main energy} with gauge $h(t) = h(1) t^\alpha$.  {For} all $x \in K$ and $r > 0$ with $B(x,r) \subset \Omega$ and $h(r) \leq \varepsilon_{por}$, there exists $y \in {K \cap B(x,r/2)}$ and $t \in [\gamma_{\rm por} r,r/2]$ such that
    \begin{equation*}
        \text{$K$ is $C^{1,\alpha/2}$ in $B(y,t)$}.
    \end{equation*}
\end{proposition}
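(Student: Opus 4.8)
The plan is, given a ball $B(x,r)$ as in the statement, to produce a concentric sub-ball on which both the flatness $\beta_K$ and the normalized $p$-elastic energy $\omega_p$ are small; to upgrade this via Lemma~\ref{lem_init_J} to a further sub-ball on which, in addition, the normalized jump $J_u$ is large; and finally to apply the epsilon-regularity theorem (Theorem~\ref{th: eps_reg}) on a still smaller concentric ball. Fix any $p \in (4/3,2)$ throughout, and let $\alpha \in (0,1)$ be as in the statement, noting that the gauge $h(t)=h(1)t^\alpha$ is exactly the one required by Theorem~\ref{th: eps_reg}.

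First I would locate the good ball. Since $h$ is non-decreasing, requiring $\varepsilon_{por}\le\min\{\varepsilon_{\rm ur},\varepsilon_{\rm Ahlf}\}$ makes both \eqref{eq: 5.28} (used at $x$ with radius $r/2$) and Lemma~\ref{lemma: david} (used on $B(x,r/2)$, whose elastic-energy hypothesis is supplied by \eqref{eqn:AhlforsReg2} together with Ahlfors-regularity \eqref{eqn:AhlforsReg}) available. Restricting the two resulting Carleson-type estimates to $B(x,r/6)$ and $t\in(0,r/6)$ gives
\begin{gather*}
    \int_{K \cap B(x,r/6)} \Big( \int_0^{r/6} \beta_K^2(y,t) \, \frac{\dd t}{t} \Big) \, \dd \HH^1(y) \le C r, \\
    \int_{K \cap B(x,r/6)} \Big( \int_0^{r/6} \omega_p(y,t) \, \frac{\dd t}{t} \Big) \, \dd \HH^1(y) \le C r.
\end{gather*}
Adding these and applying the Chebyshev argument of Lemma~\ref{lemma: cheb} to $\psi:=\beta_K^2+\omega_p$, for every $\eta>0$ there are $c_\eta\in(0,1)$ (depending only on $\eta$ and $\mathbb{C}$), a point $y\in K\cap B(x,r/6)$, and a radius $t\in[c_\eta r/6,\,r/6]$ with $\beta_K(y,t)\le\sqrt{\eta}$ and $\omega_p(y,t)\le\eta$.

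The rest is bookkeeping of constants, in the following order. Given $\alpha$, let $\varepsilon_0^{\rm reg},\gamma^{\rm reg}$ be the constants of Theorem~\ref{th: eps_reg}, set $C_0:=3/\varepsilon_0^{\rm reg}$, and let $\varepsilon_0\in(0,1/2]$, $\gamma\in(0,1/2)$ be the constants furnished by Lemma~\ref{lem_init_J} for this $C_0$. I would then choose $\eta$ so small that $\sqrt{\eta}+\eta\le\varepsilon_0/2$ and $(2/\gamma)\sqrt{\eta}\le\varepsilon_0^{\rm reg}/3$, and finally take $\varepsilon_{por}\le\min\{\varepsilon_{\rm ur},\varepsilon_{\rm Ahlf},\varepsilon_0/2,\varepsilon_0^{\rm reg}/3\}$. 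On $B(y,t)$ we then have $\beta_K(y,t)+\omega_p(y,t)+h(t)\le\varepsilon_0$ (using $h(t)\le h(r)\le\varepsilon_{por}$), so Lemma~\ref{lem_init_J} yields $z\in K\cap B(y,t/2)$ with $J_u(z,\gamma t)\ge C_0$, hence $J_u(z,\gamma t)^{-1}\le\varepsilon_0^{\rm reg}/3$. Since $\gamma<1/2$ we have $B(z,\gamma t)\subset B(y,t)$, so Remark~\ref{rmk_beta} gives $\beta_K(z,\gamma t)\le(2/\gamma)\beta_K(y,t)\le(2/\gamma)\sqrt{\eta}\le\varepsilon_0^{\rm reg}/3$, while $h(\gamma t)\le\varepsilon_{por}\le\varepsilon_0^{\rm reg}/3$. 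Summing the three bounds and invoking Theorem~\ref{th: eps_reg} shows that $K\cap B(z,\gamma^{\rm reg}\gamma t)$ is a $C^{1,\alpha/2}$-graph. As $z\in B(y,t/2)\subset B(x,r/2)$ and $\gamma^{\rm reg}\gamma t\in[\gamma^{\rm reg}\gamma c_\eta r/6,\,r/2]$, the proposition follows with $\gamma_{\rm por}:=\gamma^{\rm reg}\gamma c_\eta/6$, and every constant introduced depends only on $\mathbb{C}$ and $\alpha$ (recall $p$ is fixed).

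The one genuinely delicate point internal to this argument is that $\beta_K$ and $\omega_p$ must be made small \emph{in the same ball}; this is precisely why I would add the two Carleson-type estimates \emph{before} running the Chebyshev argument, rather than applying it to each separately. The deeper difficulty — converting $\beta+\omega_p\le\varepsilon$ into a quantitative lower bound on $J_u$ in the presence of the Griffith energy's invariance under infinitesimal rigid motions — is isolated in Lemma~\ref{lem_init_J}, proved in the next subsection, which I use here as a black box.
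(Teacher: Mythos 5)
Your proof is correct and follows the same overall strategy as the paper's: initialize smallness of $\beta_K$ and $\omega_p$ on a sub-ball via Carleson-type estimates and a Chebyshev argument, upgrade this to a lower bound on $J_u$ via Lemma~\ref{lem_init_J}, re-initialize $\beta_K$ on the smaller ball via Remark~\ref{rmk_beta}, and close with Theorem~\ref{th: eps_reg}. Your bookkeeping of constants is in the right order (fix $\varepsilon_0^{\rm reg}$ and $\gamma^{\rm reg}$ from $\alpha$, then $C_0$, then $\varepsilon_0$ and $\gamma$ from Lemma~\ref{lem_init_J}, then $\eta$ small depending on $\gamma$ and $\varepsilon_0$, then $c_\eta$ from Chebyshev), and the containment chains $z\in B(y,t/2)\subset B(x,r/2)$ and $B(z,\gamma t)\subset B(y,t)\subset B(x,r)\subset\Omega$ check out.

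The one genuine difference is exactly the point you flagged: you add the two Carleson estimates for $\beta_K^2$ and $\omega_p$ on the common ball $B(x,r/6)$ with the common range $(0,r/6)$, and run Chebyshev \emph{once} on the sum $\psi=\beta_K^2+\omega_p$, producing a single ball where both quantities are small simultaneously. The paper instead applies Chebyshev sequentially — first to $\beta^2$ on $B(x,r/8)$ (using \eqref{eq: 5.28}), obtaining $B(y,t)$ with $\beta(y,t)\le\bar\varepsilon$; then to $\omega_p$ on the smaller ball $B(y,t/8)$ (using Lemma~\ref{lemma: david}), obtaining $B(z,s)$; and finally re-propagates the smallness of $\beta$ into $B(z,s)$ via Remark~\ref{rmk_beta}, which forces an extra constraint coupling $\bar\varepsilon$ to $\gamma_2$ (see \eqref{aaaa choose}). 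Your version avoids that intermediate nesting and the coupled choice of $\bar\varepsilon$, which makes the constant-chasing slightly cleaner, at no cost: both Carleson integrals are over the same $y$ and $t$, so their sum again satisfies a Carleson condition and Lemma~\ref{lemma: cheb} applies verbatim. This is a minor but real simplification of the paper's Step~2.
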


\begin{proof}
    We choose  $\eps_{\rm por}    =  \min(\varepsilon_{\rm Ahlf},\varepsilon_{\rm ur}, \eps_0/2, \eps_*)$,  where $\varepsilon_{\rm Ahlf}$ is the Ahlfors-regularity parameter {(see \eqref{eqn:AhlforsReg})}, $\varepsilon_{\rm ur}$ is the uniform rectifiability parameter {(see Theorem \ref{thm:UniformRect} and the associated definition)},  $\eps_0$ is the {epsilon-regularity constant} in \eqref{eq: smallli epsi}, and $\eps_*$ is a constant only depending on $\eps_0$ and $\mathbb{C}$ chosen in Step~1 below.
    {Note that $\eps_{\rm por}$ only depends on $\mathbb{C}$ and $\alpha$.}
    Our goal is to prove that there exists $\bar{\gamma}_{\rm por}  \in (0, 1/2) \EEE$, depending on $\mathbb{C}$ and $\alpha$, such that for all $x \in K$, $r > 0$ with $B(x,r) \subset \Omega$, and $h(r) \leq \eps_{\rm por} $, there exists $y \in {K \cap B(x,r/2)}$ and $t \in [\bar{\gamma}_{\rm por}  r,r/2]$ such that 
    \begin{equation}\label{eq: toooooprpp}
        \beta(y,t) + J^{-1} (y,t)\leq \varepsilon_0/2. 
    \end{equation}
    {Once we have {proven} \eqref{eq: toooooprpp}, in view of  $h(r) \leq  \eps_{\rm por} \le \eps_0/2$, we can apply epsilon-regularity (Theorem \ref{th: eps_reg}) in the ball $B(y,t)$. We conclude that there exists a constant $\gamma \in (0,1)$, depending on $\mathbb{C}$ and $\alpha$, such that $K$ is $C^{1,\alpha/2}$ in $B(y,\gamma t)$.
    This {concludes the proof of the proposition with} the choice $\gamma_{\rm por} := \gamma \bar{\gamma}_{\rm por}$.}
    {It remains to} show \eqref{eq: toooooprpp}.   

    \emph{Step 1: Initializing $J$ given control on $\beta$ and $\omega_p$.}   Let us fix an exponent $p \in (4/3,2)$.
    According to Lemma \ref{lem_init_J}, there {exist constants} $\eps_*  > 0$ and $\gamma_* \in (0,1/2)$  depending on  $\varepsilon_0$ and $\mathbb{C}$  such that {for all} $z \in K$ and $s > 0$ with $B(z,s) \subset \Omega$   the {condition}
    \begin{equation}\label{eq_bov}
        \beta(z,s) + \omega_p(z,s)  + h(s) \leq 3  \eps_*  
    \end{equation}
    {implies} the existence of $y \in K \cap B(z,s/2)$ such that
    \begin{equation*}
        J^{-1}(y, \gamma_* s) \leq  \varepsilon_0/4. 
    \end{equation*}
    According to the scaling properties of $\beta$ (Remark \ref{rmk_beta}), we also have {$\beta(y,\gamma_* s) \le 2\gamma_*^{-1} \beta(z,s) \leq 6 \gamma_*^{-1} \varepsilon_*$}. Thus, if we take $\eps_* $ even smaller, still depending only on  $\varepsilon_0$  and $\mathbb{C}$, \EEE we obtain
    \begin{equation*}
        {\beta(y,t) + J^{-1}(y,t) \leq  \varepsilon_0/2, \quad \text{where $t = \gamma_* s$.}}
    \end{equation*}
\BBB To complete the proof in Step 3 below, it basically remains  \EEE to  justify the initialization of (\ref{eq_bov}).

    \emph{Step 2: Initializing $\beta$ and $\omega_p$.} 
    Let $\eps_* > 0$ as in Step 1 and  recall that    it only depends on $\varepsilon_0$ and $\mathbb{C}$. \EEE We show that there exists $\gamma_0 \in (0, 1/4) \EEE$ depending on   $\eps_* $ {and $\mathbb{C}$} such that for all $x \in K$ and $r > 0$ with $B(x,r) \subset \Omega$ and $h(r) \leq \eps_{\rm por}  $, there {exist} $z \in {K} \cap B(x,r/4)$  and $s \in [\gamma_0 r, r/4]  $ such that
    \begin{equation*}
        {  \beta(z,s)  \leq \eps_* , \quad \quad   \omega_p(z,s)  \le \eps_* .}
    \end{equation*}
    To this end,  let  $\bar{\eps} \le \eps_* $ to be specified below in \eqref{aaaa choose}.  We can apply Theorem \ref{thm:UniformRect} (for {$r/2$} in place of $r$)  to find that $K \cap B(x, {r/2})$ is contained in an Ahlfors-regular curve. Then, as explained in \eqref{eq: 5.28},   we have
    $$ \int_{K \cap B(x,r/8)} \Big( \int_0^{r/8} \beta^2(y,t)  \frac{\dd{t}}{t} \Big) \dd{\HH^1(y)} \leq C_0 r
    $$
    for a constant $C_0 \geq 1$ which depends on $\mathbb{C}$. Therefore,  by  applying Lemma \ref{lemma: cheb} for $\bar{\eps}^2$, we find $\gamma_1 \in (0,1/8)$ depending only on $\bar{\eps}$ {and $\mathbb{C}$} as well as $y \in K \cap B(x,r/8)$ and $t \in [\gamma_1 r, r/8]$ such that      
    \begin{equation}\label{eq_porosity_beta}
        \beta^2(y,t) \leq \bar{\varepsilon}^2.
    \end{equation}
    {Next, we apply Lemma \ref{lemma: david} \BBB (using also \eqref{eqn:AhlforsReg2}) \EEE to find
        $$\int_{K \cap B(y,t/8)} \int_0^{t/8} \omega_p(z,s) \frac{\dd{s}}{s} \dd{\HH^1(z)} \leq C_p t,$$
        for a constant $C_p \geq 1$ which depends on $\mathbb{C}$.  Then,  we apply Lemma~\ref{lemma: cheb} for $\eps_* $} and find       some constant $\gamma_2 \in (0,1/8)$ depending   $\eps_* $ and $\mathbb{C}$ as well as some $z \in {K} \cap B(y,t/8)$ and $s \in [\gamma_2 t,t/8]$ such that
    \begin{equation*}
        \omega_p(z,s) \leq \eps_* .
    \end{equation*}
    By (\ref{eq_porosity_beta}), Remark \ref{rmk_beta}, and $s \geq \gamma_2 t$ we also have $\beta(z,s) \leq 2 \gamma_2^{-1} \bar{\varepsilon}$. Thus, choosing $\bar{\eps}$ small enough depending on $\gamma_2$ and $\eps_* $ (and thus depending only on $\eps_* $ {and $\mathbb{C}$}), we get
    \begin{equation}\label{aaaa choose}
        \beta(z,s)   \leq 2 \gamma_2^{-1}   \bar{\varepsilon} \le \eps_*.
    \end{equation}
    This shows Step 2 for the choice $\gamma_0 := \gamma_1 \gamma_2$.

    \emph{Step 3: Conclusion.}
    For all $x \in K$ and $r > 0$ with  $B(x,r) \subset \Omega$   and $h(r) \leq \varepsilon_{\rm por} $,   we apply Step 2 to find a smaller ball $B(z,s)$  in $B(x,r)$ with   $z \in K \cap B(x,r/4)$ and  $s \in [\gamma_0 r , r/4]$ such that $\beta(z,s) + \omega_p(z,s) \leq 2  \eps_* $. Then, as $h(r) \leq  \varepsilon_{\rm por} \le \eps_*$,  we apply Step 1  to find a smaller ball $B(y,t)$, with  $y \in K \cap   B(z,s/2) \subset   B(x,r/2)$ and  $t = \gamma_* s$, such that  $\beta(y,t) + J^{-1}(y,t) \leq \varepsilon_0/2$. This  shows \eqref{eq: toooooprpp} and thus \EEE the statement for the choice $\bar{\gamma}_{\rm por}   = \BBB \gamma_* \EEE \gamma_0$. 
\end{proof}

Theorem \ref{prop_porosity} directly  implies Theorem \ref{th: main result}(i) by a standard abstract argument.

\begin{proof}[Proof of  Theorem \ref{th: main result}(i)]
    {By either} \cite[Theorem 51.20, Lemma 51.21]{David} or \cite[Lemma 5.8]{DaSe}, the porosity property implies the estimate on the Hausdorff dimension of $K^*$.
\end{proof}

We finally pass to Theorem \ref{th: main result}(ii), i.e., to  the higher integrability of the gradient in the case $h \equiv 0$. Here, we  follow  the technique of \cite{DPF} which applies to our case,  even though we use the symmetric gradient $e(u)$ instead of the full gradient $\nabla u$. More precisely, we can apply \cite[Lemma 7.1]{CL2}  which follows from the technique of \cite{DPF}. The lemma is stated below in dimension two for convenience of the reader. Here, we say that $K$ is $C^{1,\alpha}$-regular in a ball $B(x_0,r)$ with $x_0 \in K$ if $K \cap B(x_0,r)$ is the graph of a $C^{1,\alpha}$ function $f$ such that, in a coordinate system, it holds $f(0) = 0$, $f'(0) = 0$ and $r^\alpha \| f' \|_{C^\alpha} \leq 1/16$.
{This constraint on the normalized Hölder norm plays a role in the proof of \cite[Lemma 7.1]{CL2}.  Yet, we emphasize that,  if we have $r^{\alpha} \Vert f' \Vert_{C^{\alpha}} \leq C$ for some universal constant $C \geq 1$, then we can find a smaller radius $t \in (0,r)$, with $r/t$ only depending on $C$, such that $t^{\alpha} \Vert f' \Vert_{C^{\alpha}} \leq 1/16$. Now, inspection of the proof of epsilon-regularity, see \cite[Proposition 4.1]{FLSeps}, shows that  the graph in the statement of  Theorem \ref{th: eps_reg} satisfies $r^{\alpha} \Vert f' \Vert_{C^{\alpha}} \leq C$.  From the above comments, we can thus assume that the graph is $C^{1,\alpha}$-regular.

\begin{lemma}[Lemma 7.1 of \cite{CL2}]\label{lem_DPF}
    We fix a radius $R > 0$.
    Let $K$ be a closed subset of $B(0,R) \subset \R^2$ and $v \colon B(0,R) \to [0,\infty)$ be a nonnegative Borel function.
    We assume that there exists $C_0 \geq 1$ and $0 < \alpha \leq 1$ such that the following holds true:
    \begin{enumerate}
        \item For each ball $B(x,r) \subset B(0,R)$ with $x \in K$, it holds that
            \begin{equation}\nonumber
                C_0^{-1} r \leq \HH^{1}(K \cap B(x,r)) \leq C_0 r.
            \end{equation}
        \item For each ball $B(x,r) \subset B(0,R)$ with  $x \in K$, there exists a smaller ball $B(y,C_0^{-1} r) \subset B(x,r)$ with $y \in K$ in which $K$ is $C^{1,\alpha}$-regular.
        \item For each ball $B(x,r) \subset B(0,R)$ such that either   $K \cap B(x,r) = \emptyset$ or such that $x \in K $ and $K$ is $C^{1,\alpha}$-regular in $B(x,r)$, we have
            \begin{equation}\label{ass:3}
                \sup_{y \in B(x,r/2)} v(y) \leq C_0 r^{-1}.
            \end{equation}
    \end{enumerate}
    Then, there exists $q > 1$  and $C \geq 1$ depending on $C_0$ such that
    \begin{equation}\nonumber
        \int_{B(0,R/2)} \! v^q  \, \dd x \leq C R^{2-q}.
    \end{equation}
\end{lemma}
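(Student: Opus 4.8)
The plan is to run the scheme of De Philippis--Figalli \cite{DPF}: reduce the higher integrability of $v$ to a power decay of its superlevel sets, then produce that decay by first localizing $\{v>\lambda\}$ near a ``bad'' part of $K$ (via hypothesis $(3)$) and then exploiting that this bad part is porous in $K$ (via $(2)$) and that $K$ is $1$-Ahlfors regular (via $(1)$). Concretely, by the layer-cake formula $\int_{B(0,R/2)}v^q\,\dd x=q\int_0^\infty\lambda^{q-1}\,\mathcal{L}^2(\{v>\lambda\}\cap B(0,R/2))\,\dd\lambda$; for $\lambda$ below a threshold $\lambda_1$ comparable to $C_0^3/R$ one uses the trivial bound $\mathcal{L}^2\le\pi R^2/4$, whose contribution is $\le CR^{2-q}$. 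So it suffices to establish, for $\lambda\ge\lambda_1$ and some $s>1$ depending only on $C_0$,
\[
\mathcal{L}^2\big(\{v>\lambda\}\cap B(0,R/2)\big)\ \le\ C\,R^{2}(\lambda R)^{-s},
\]
since then every $q\in(1,s)$ gives $\int_{\lambda_1}^\infty\lambda^{q-1}CR^2(\lambda R)^{-s}\,\dd\lambda\le CR^{2-q}$.

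Next I would localize the superlevel sets. Using $(3)$ in the case $K\cap B(x,r)=\emptyset$ on the ball $B(x,\min(\operatorname{dist}(x,K),R/2))$ shows that $v(x)>\lambda$ forces $\operatorname{dist}(x,K)<C_0/\lambda$ (otherwise $v(x)\le\lambda$). Pick $x_0\in K$ with $|x-x_0|<C_0/\lambda$ and set $\rho_\lambda:=4C_0^3/\lambda$; then applying $(3)$ in the $C^{1,\alpha}$-regular case on $B(x_0,\rho_\lambda)$ shows $K$ cannot be $C^{1,\alpha}$-regular there (else $x\in B(x_0,\rho_\lambda/2)$ would give $v(x)\le C_0/\rho_\lambda=\lambda/(4C_0^2)<\lambda$). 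Hence, with $\mathcal{N}_\lambda:=\{z\in K:\ K\text{ is not }C^{1,\alpha}\text{-regular in }B(z,\rho_\lambda)\}$, we obtain $\{v>\lambda\}\cap B(0,R/2)\subset\{x:\operatorname{dist}(x,\mathcal{N}_\lambda)<C_0/\lambda\}$ for $\lambda\ge\lambda_1$ (the range $\lambda<\lambda_1$ being already absorbed in the reduction).

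The heart of the proof is then that $\mathcal{N}_\lambda$ is \emph{porous in $K$}, which I would turn into a covering estimate. Fix $z\in K$ and a scale $s\in[c(C_0,\alpha)\rho_\lambda,\,R/2]$ with $B(z,2s)\subset B(0,R)$; by $(2)$ there is $B(y,C_0^{-1}s)\subset B(z,s)$, $y\in K$, in which $K$ is $C^{1,\alpha}$-regular. Since the restriction of a $C^{1,\alpha}$-graph with small normalized norm to a concentric sub-ball centered at a graph point again has small normalized norm (the vanishing of the derivative at the base point controls the awkward zeroth-order term in the given normalization), $K$ stays $C^{1,\alpha}$-regular in $B(z',\rho_\lambda)$ for every $z'\in K\cap B(y,C_0^{-1}s/2)$ once $\rho_\lambda\le c(C_0,\alpha)s$; thus $\mathcal{N}_\lambda\cap B(y,C_0^{-1}s/2)=\emptyset$, i.e.\ around every point of $K$ and at every scale down to $\sim\rho_\lambda$ the set $\mathcal{N}_\lambda$ misses a ball of proportional radius centered on $K$. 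Running a dyadic iteration of this, using the Ahlfors regularity $(1)$ of $K$ to compare covering numbers of $K$ at consecutive generations, yields some $\delta=\delta(C_0)>0$ and a cover of $\mathcal{N}_\lambda\cap B(0,R/2)$ by at most $C(C_0,\alpha)(R/\rho_\lambda)^{1-\delta}$ balls of radius $\rho_\lambda$ --- the quantitative form of ``a porous subset of a $1$-Ahlfors-regular set has dimension at most $1-\delta$'', in the spirit of \cite[Section~51]{David} and \cite[Lemma~5.8]{DaSe}. Covering the $(C_0/\lambda)$-neighborhood of $\mathcal{N}_\lambda$ by the slightly enlarged balls then gives $\mathcal{L}^2(\{v>\lambda\}\cap B(0,R/2))\lesssim (R/\rho_\lambda)^{1-\delta}\rho_\lambda^{2}\simeq R^{1-\delta}\lambda^{-(1+\delta)}=R^{2}(\lambda R)^{-(1+\delta)}$, i.e.\ the desired decay with $s=1+\delta>1$, and the reduction concludes with any $q\in(1,1+\delta)$.

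The main obstacle I anticipate is this porosity step: one must carefully verify that $C^{1,\alpha}$-regularity in a ball passes to all sub-balls centered at points of $K$ (routine for $C^{1,\alpha}$ graphs but sensitive to the exact normalization in the definition, hence the $\alpha$-dependence in some constants), and then carry out the quantitative covering/dimension estimate for porous subsets of an Ahlfors-regular set with explicit constants so that $\delta$ depends only on $C_0$. Everything else --- the layer-cake reduction and the two applications of $(3)$ --- is bookkeeping of thresholds.
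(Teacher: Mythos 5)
Your outline follows the De Philippis--Figalli strategy (layer-cake reduction, localization of $\{v>\lambda\}$ near the non-regular part of $K$ at scale $\sim 1/\lambda$ via hypothesis (3), then the porosity-implies-small-Hausdorff-dimension covering count via hypotheses (1)--(2)), which is exactly the scheme behind the cited result: the paper does not reprove this lemma but quotes it verbatim from \cite[Lemma~7.1]{CL2}, which itself adapts \cite{DPF}, so your sketch reconstructs the intended proof. The one step deserving care --- which you already flagged --- is checking that $C^{1,\alpha}$-regularity of $K$ in $B(y,C_0^{-1}s)$ passes to $B(z',\rho_\lambda)$ for $z'\in K$ nearby and $\rho_\lambda\leq c(C_0,\alpha)s$, which does follow from $f'(0)=0$ and the normalized H\"older bound after a small rotation of coordinates.
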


\begin{proof}[Proof of  Theorem \ref{th: main result}(ii)]
    Property \eqref{eq: almost last} for all $x \in \Omega$ and $r > 0$ such that $B(x,\BBB 2r) \EEE \subset \Omega$ will follow from  Lemma \ref{lem_DPF} applied in the ball \cl{$B(x,2r)$} to the function $v := \abs{e(u)}^2$, with $p = 2q$,  once we have checked all the assumptions.

    Assumption (1) follows from the  Ahlfors-regularity of $K$, see \eqref{eqn:AhlforsReg}.  

    Assumption (2) follows from Proposition \ref{prop_porosity}. (Since there is no gauge, we indeed get that $K$ is $C^{1,1/4}$ in the smaller ball  with a small {(normalized)} H\"older constant, \EEE see \cite[Remark 2.5(i)]{FLSeps} {and the comment just before Lemma \ref{lem_DPF}}.) 

    Assumption (3) follows from elliptic regularity and the upper bound on the elastic energy given by {$\int_{B(x,r)} \abs{e(u)}^2 \le \bar{C}_{\rm Ahlf}r$} within \eqref{eqn:AhlforsReg2}.
    {Precisely, in the interior of $\Omega\setminus K$}, for {any} ball $B(y,t) \subset B(x,r)$ which is disjoint from $K$, we have
    \begin{equation*}
        \sup_{B(y,t/2)} \abs{e(u)}^2 \leq C \dashint_{B(y,t)} \abs{e(u)}^2 \dd{x}
    \end{equation*}
    by interior Schauder estimate for solutions of elliptic systems, see \cite[Section 5]{Iu3}.  In \EEE combination with the aforementioned bound on \BBB the \EEE elastic energy, we have \eqref{ass:3}. {For balls centered {on} $K$ where $K$ is $C^{1,1/4}$ regular}, the details are more involved. {Letting  $B(y,t) \subset B(x,r)$  with  $y \in K$  and $K$ being $C^{1,1/4}$-regular in $B(y,t)$}, {a} similar Schauder estimate for solutions of elliptic systems with a  Neumann boundary condition {holds}. In {this case}, the estimate can be proven by straightening $K$ via a change of variable and extending the solution on the other side by reflection (the coefficients of the equations are reflected accordingly). Then the problem {reduces to} an interior regularity {estimate} which can be dealt with {via a standard} `freezing the coefficients' trick.
    This procedure is detailed in the scalar case in \cite[Theorem~7.53]{Ambrosio-Fusco-Pallara:2000} but also works for general elliptic systems. In particular, it has been adapted to Lamé's equations in \cite[Theorem 3.18]{FFLM07}. 
    {There, the} authors prove that weak solutions of Lamé's equations with a Neumann boundary condition are Hölder differentiable up to the boundary but the proof also yields the above Schauder estimate (see \cite[Lemma A.1]{CL2} for the missing details).
\end{proof}

\subsection{Initialization of the jump}\label{sec: look at jumo}

We start by introducing  an {auxiliary} notion {of jump, which has implicitly made an appearance in the proof of Theorem \ref{thm: david}}.  Besides the normalized jump defined in \eqref{def:normalizedJump},   for $B(x_0,r_0) \subset \Omega$ with $x_0 \in K$ {and $\beta_K(x_0,r_0) \leq 1/2$}, \EEE we define the   \emph{normalized  affine jump} by
\begin{equation*}
    {J}_{\rm aff}(x_0,r_0) =  \frac{\abs{a^+_{x_0,r_0}(x_0)  - a^-_{x_0,r_0}(x_0)} + r_0| A^+(x_0,r_0) -  A^-(x_0,r_0)|}{\sqrt{r_0}},
\end{equation*}
where $a^\pm_{x_0,r_0}(y):=A^\pm(x_0,r_0)\,y + b^\pm  (x_0,r_0)$ denote the corresponding rigid motions.  Note that the essential notion for us is $J(x_0,r_0)$, \cl{defined in \eqref{def:normalizedJump},} but in a first step it will be easier to control $J_{\rm aff}(x_0,r_0)$  compared to $J$, i.e.,  $J_{\rm aff}(x_0,r_0)$ \EEE  serves as an intermediate auxiliary notion.

We first start with  this first step giving control  on \EEE $J_{\rm aff}$ \BBB and \EEE address a bound on $J$ afterwards.   Recall also the normalized $p$-elastic energy introduced in (\ref{eqn:pelastic}).

\begin{lemma}[Initializing $J_{\rm aff}$]\label{lem:forcedJump}
    Let $p \in (1,2]$ and $(u,K)$ be a Griffith almost-minimizer \BBB  with respect to a gauge $h$. \EEE There are constants $\eps_0 \in (0,1/2)$ and $c_{\rm jump} > 0 $, depending only on $\mathbb{C}$, such that, for all $x_0 \in K$, $r_0 > 0$ such that $B(x_0,r_0)   \subset \Omega$ and
    \begin{equation*}
        \beta(x_0,r_0) + \omega_p(x_0,r_0) + h(r_0) \leq \eps_0,
    \end{equation*}
    it holds
    \begin{equation}\label{eqn:initJump}
        J_{\rm aff}(x_0,r_0 ) \geq c_{\rm jump} .
    \end{equation}
\end{lemma}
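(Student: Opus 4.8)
The plan is to argue by contradiction, mimicking the structure of Step~5 in the proof of Theorem~\ref{thm: david} but tracking the \emph{affine} jump rather than the jump between rigid motions on disjoint circular domains. Suppose that $\beta(x_0,r_0) + \omega_p(x_0,r_0) + h(r_0) \le \eps_0$ yet $J_{\rm aff}(x_0,r_0) < c_{\rm jump}$, where $\eps_0$ and $c_{\rm jump}$ are to be chosen small. After rescaling (Remark~\ref{rem: normalization}) we may take $x_0 = 0$ and $r_0 = 1$. Since $\beta(0,1) \le \eps_0 \le 1/2$, the sets $D^\pm := D^\pm_{\beta(0,1)}(0,1)$ are nonempty, do not meet $K$, each occupy a fixed fraction of $B(0,1)$, and on each of them Korn--Poincar\'e gives $\int_{D^\pm}|u - a^\pm|^p + \int_{D^\pm}|\nabla u - A^\pm|^p \le C\int_{D^\pm}|e(u)|^p \le C\eps_0^{p/2}$, with $a^\pm(x) = A^\pm x + b^\pm$ the rigid motions from \eqref{eq: optimal values}. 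The contradiction hypothesis $J_{\rm aff}(0,1) < c_{\rm jump}$ says precisely that $|a^+(0) - a^-(0)| + |A^+ - A^-| < c_{\rm jump}$, so the two rigid motions are uniformly close on $B(0,1)$: $\|a^+ - a^-\|_{L^\infty(B(0,1))} \le C c_{\rm jump}$.

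The key construction is then a competitor that removes the crack inside a slightly smaller ball. Because $u$ is close to $a^\pm$ on $D^\pm$ in $W^{1,p}$ and $a^+$ is close to $a^-$ everywhere, one can interpolate: pick a smooth cutoff $\varphi$ supported near $D^+$ with $\varphi \equiv 1$ on $D^+$ and $\varphi \equiv 0$ on $D^-$, and set $\bar u := \varphi a^+ + (1-\varphi)a^- = a^- + \varphi(a^+ - a^-)$, which is a Sobolev function on all of $B(0,1)$ (no crack) with $e(\bar u) = \nabla\varphi \odot (a^+ - a^-)$, hence $\int_{B(0,1)}|e(\bar u)|^2 \le C c_{\rm jump}^2$. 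On a good circle $\partial B(0,s)$ (chosen via the Korn--Poincar\'e estimate so that $u - \bar u$ is controlled in $W^{1,p}$ on the part of $\partial B(0,s)$ lying in $D^+ \cup D^-$, which avoids $K$ and covers all but a small arc of $\partial B(0,s)$) one extends $u - \bar u$ from that arc to a function $f$ on the whole circle using a cutoff, then fills in by \cite[Lemma 22.16]{David} to get $v \in W^{1,2}(B(0,s))$ with $v = f$ on $\partial B(0,s)$ and $\int_{B(0,s)}|\nabla v|^2 \le C\eps_0$. Defining the competitor $(\hat u,\widehat K)$ with $\hat u = v + \bar u$ inside $B(0,s)$, $\hat u = u$ outside, and $\widehat K = (K\setminus B(0,s)) \cup (\partial B(0,s)\setminus(D^+\cup D^-))$, one has $\int_{B(0,s)}|e(\hat u)|^2 \le C(c_{\rm jump}^2 + \eps_0)$ and $\HH^1(\widehat K \cap B(0,s))$ exceeds $\HH^1(K\cap B(0,s))$ by at most a small arc. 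Feeding this into the almost-minimality inequality \eqref{amin} gives $\HH^1(K\cap B(0,s)) \le C(c_{\rm jump}^2 + \eps_0 + (\text{small arc length}) + h(1))$, and since $s$ is bounded below by a fixed fraction, Ahlfors-regularity \eqref{eqn:AhlforsReg} forces the left side to be $\gtrsim 1$; choosing $c_{\rm jump}$, $\eps_0$ small enough yields a contradiction.

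The one genuinely new wrinkle compared to Step~5 of Theorem~\ref{thm: david} is that there the circular domains $D_i$ were away from the crack by construction, whereas here $D^\pm$ are the half-ball-type regions cut out by the flatness line, so I must check that on a typical circle $\partial B(0,s)$ the "bad arc" near the crack (where $\partial B(0,s)$ leaves $D^+ \cup D^-$) has length controllable in terms of $\beta(0,1) \le \eps_0$, which follows from the definition \eqref{eq :Bx0} of $D^\pm_t$ and $K \cap (D^+ \cup D^-) = \emptyset$. Another point requiring care: the extension of $u - \bar u$ across the bad arc uses that $u - \bar u$ has controlled $W^{1,p}$ norm on $\partial B(0,s) \cap (D^+\cup D^-)$, which is exactly what the Korn--Poincar\'e bounds on $D^\pm$ deliver after averaging over $s \in$ (a fixed-length interval). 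I expect the main obstacle to be bookkeeping the dependence of all the constants on $\mathbb{C}$ (through $C_{\rm Ahlf}$, $\eps_{\rm Ahlf}$, and the Korn constants on the fixed-shape regions $D^\pm$), and making sure $\eps_0$ is chosen \emph{before} $c_{\rm jump}$ in the right order so that the final inequality $\tfrac{1}{C_{\rm Ahlf}} s \le C(c_{\rm jump}^2 + \eps_0 + \beta(0,1) + h(1))\, s$ can be made contradictory; but this is routine once the competitor is set up.

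To conclude, once \eqref{eqn:initJump} is established, the passage from $J_{\rm aff}$ to the actual normalized jump $J$ needed downstream (in Lemma~\ref{lem_init_J}) is a separate, shorter argument — one controls $\inf_{y \in K\cap B}|{(A^+ - A^-)y + (b^+ - b^-)}|$ from below in terms of $J_{\rm aff}$ by noting that $K\cap B(0,1)$ lies within the $\beta(0,1)$-strip around a line through $0$, so every $y \in K$ satisfies $|y| \le 1$, hence $|(A^+-A^-)y + (b^+-b^-)| \ge |b^+ - b^-| - |A^+-A^-| \ge |a^+(0)-a^-(0)| - |A^+-A^-|$, combined with the rigidity identity $|Av| = \tfrac{1}{\sqrt 2}|A||v|$ for skew $A$; but that refinement belongs to Lemma~\ref{lem_init_J} and is not needed for Lemma~\ref{lem:forcedJump} itself.
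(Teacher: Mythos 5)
Your overall approach is the same as the paper's (contradiction, competitor construction by interpolating the rigid motions $a^\pm$, almost-minimality against Ahlfors-regularity), and the paper itself only sketches this proof as ``essentially Step~5 of Theorem~\ref{thm: david} with $I_1 = \partial B(0,\rho) \cap D^+_{\mu r_0}$ and $I_2 = \partial B(0,\rho) \cap D^-_{\mu r_0}$''. But there is a genuine gap in your cutoff estimate, and the paper's cryptic mention of $D^\pm_{\mu r_0}$ (note the $\mu$, not $\beta$) is flagging precisely this issue.

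You take $\varphi \equiv 1$ on $D^+ = D^+_{\beta(0,1)}$ and $\varphi \equiv 0$ on $D^- = D^-_{\beta(0,1)}$ and claim $\int_{B(0,1)}|e(\bar u)|^2 \le C c_{\rm jump}^2$. This fails because the sets $D^+_{\beta(0,1)}$ and $D^-_{\beta(0,1)}$ are separated only by the strip $\{|x\cdot\nu| \le \beta(0,1)\}$ of width $2\beta(0,1)$, so any transition function must satisfy $\|\nabla\varphi\|_\infty \gtrsim 1/\beta(0,1)$. Since $e(\bar u) = \nabla\varphi \odot (a^+ - a^-)$ is supported on this strip (area $\sim \beta(0,1)$), the honest estimate is $\int_{B(0,1)}|e(\bar u)|^2 \lesssim c_{\rm jump}^2/\beta(0,1)$, which is unbounded as $\beta(0,1) \to 0$; $\beta(0,1)$ is controlled only from above (by $\eps_0$), not from below, so you cannot feed this into the almost-minimality inequality and reach a contradiction. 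The fix, which is what the paper's hint implements, is to transition across a \emph{fixed-width} strip: fix $\mu > 0$ small (depending only on $C_{\rm Ahlf}$, chosen so that, say, $\mu \ll 1/C_{\rm Ahlf}$), require $\eps_0 < \mu$, and take $\varphi \equiv 1$ on $D^+_{\mu}$, $\varphi \equiv 0$ on $D^-_{\mu}$, with $|\nabla\varphi| \le C/\mu$. Then $\int_{B(0,1)}|e(\bar u)|^2 \le C_\mu c_{\rm jump}^2$ with $C_\mu$ fixed, and the added wall-set $\partial B(0,s)\setminus(D^+_\mu\cup D^-_\mu)$ has length $\lesssim \mu s$ --- no longer negligible but still a small fraction of $s$ by the choice of $\mu$. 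The Korn--Poincar\'e estimates are used on $D^\pm_\mu \subset D^\pm_{\beta(0,1)}$ exactly as you wrote. This also corrects the order of choices at the end: $\mu$ (hence $C_\mu$) is fixed first depending only on $C_{\rm Ahlf}$, and \emph{then} $c_{\rm jump}$ and $\eps_0$ are taken small relative to $C_\mu$ and $1/C_{\rm Ahlf}$, which is the reverse of what your last paragraph of the main argument suggests.

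A secondary remark: interpolating directly on all of $B(0,1)$ rather than on an annular ring $R$ as in Step~5 is a legitimate simplification (you avoid the $W^{1,2}$ extension of $\bar u$ from $R$ to the ball), and once the $\mu$-gap issue above is fixed, the rest of your argument (good circle, extension of $u - \bar u$ across the bad arc via \cite[Lemma 22.16]{David}, comparison, and the $J_{\rm aff}$-to-$J$ passage you correctly defer to Lemma~\ref{lem_init_J}) goes through as in the paper.
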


\begin{proof}
    To obtain the result, one supposes that \eqref{eqn:initJump} does not hold, and then constructs an  energetically favorable competitor by simply clearing out the crack in the interior and introducing a smooth transition, which is paid for in the elastic energy, and a small addition to the crack in the form of a wall-set.  
    As the proof of this result essentially follows \ks{Step 5} of Theorem \ref{thm: david}  by starting from two arcs $I_1$ and $I_2$ given by $I_1= \partial B(0,\rho) \cap D_{\mu r_0}^+ $ and $I_2= \partial B(0,\rho) \cap D_{\mu r_0}^- $ \BBB (recall \eqref{eq :Bx0}), \EEE in the spirit of brevity, we do not provide further details.     
\end{proof}

The idea in the proof of Lemma \ref{lem_init_J} is to use  Lemma \ref{lem_aA} to obtain a good control for  $J$ on  sufficiently small balls.
Property \eqref{eqn:initJump}, however, is not enough {since $J_{\rm aff}$ is possibly large due to the difference in rigid motions and not the difference in average values hidden in $J$. More precisely,} one could have
$|a^+_{x_0,t}(x_0)  - a^-_{x_0,t}(x_0)| = 0$ and  $| A^+(x_0,t) -  A^-(x_0,t)| \ge c_{\rm jump}/\sqrt{t}$ for $t \le r_0$, and in this case {a lower bound on the growth of $t \mapsto |a^+_{x_0,t}(x_0)  - a^-_{x_0,t}(x_0)|$ cannot be obtained since it is too small initially.} As we show now, this phenomenon can be avoided by slightly changing the center, and then $J$ can be estimated via Lemma \ref{lem_aA}.

\begin{proof}[Proof of Lemma \ref{lem_init_J}]
    {Let $x_0 \in K$, $r_0 > 0$ be such that $B(x_0,r_0) \subset \Omega$ and assume that
        \begin{equation}\label{eq: 6.3bis}
            \beta(x_0,r_0) + \omega_p(x_0,r_0) + h(r_0) \leq \varepsilon_0, 
        \end{equation}
    for some $\varepsilon_0 > 0$ which will be fixed during the proof, only depending on $C_0$ and $\mathbb{C}$.}
    We split the proof into three steps.  In the first two steps, we prove a weak estimate on $J$, namely:   there  exist two constants $c_0,\bar{c}$, depending only \BBB on \EEE $C_{\rm Ahlf}$,  as well as         $y_0 \in B(x_0,r_0/2) \cap K$ and  $r \in [\bar{c}r_0, r_0/4]$ such that  $B(y_0,r) \subset B(x_0,r_0)$ and 
    \begin{align}\label{eq: before smaller}
        J(y_0,r) \geq c_0c_{\rm jump},  
    \end{align}
    where $c_{\rm jump}$ is the constant of {Lemma \ref{lem:forcedJump}.}  In the third step, we conclude the proof. To show \eqref{eq: before smaller}, we fix  $ 0 < \kappa \le 1/2$ which will be chosen sufficiently small depending only on $C_{\rm Ahlf}$ {(see \eqref{eq: end of the step} below) and distinguish between the two cases where}   
    \begin{align}\label{eq: the smallness case0}
        \inf_{y \in K\cap B(x_0,r_0/4)}\abs{(A^+(x_0,r_0) - A^-(x_0,r_0))y +   (b^+(x_0,r_0) - b^-(x_0,r_0))} \ge  \kappa c_{\rm jump} \sqrt{r_0}
    \end{align} 
    holds or not. Without restriction we choose $\eps_0 \le \eps_{\rm Ahlf}$ {(see \eqref{eqn:AhlforsReg}) and} smaller than the constant in Lemma \ref{lem:forcedJump}. Along the proof, we further choose $\eps_0$ small enough depending  on $C_0$  (see \eqref{CCC0}), \EEE $c_{\rm jump}$, and $C_{\rm Ahlf}$ (and thus only on $C_0$ and $\mathbb{C}$).

    \emph{Step 1: Proof of \eqref{eq: before smaller} if \eqref{eq: the smallness case0} holds.} In this case, to control $J$ defined in \eqref{def:normalizedJump}, we estimate the difference of rigid motions on balls of different size. {Letting without restriction  $\eps_0\leq 1/32$,} by  \eqref{eq: 6.3bis} and Remark \ref{rmk_beta}  we have  $\beta(x_0,t) \le 1/8$ for all $t \in [r_0/4,r_0]$, and then   by Lemma \ref{lem_aA} we get 
    $$    | a^\pm_{x_0,r_0}(y) -  a^\pm_{x_0,r_0/4}(y)|    \leq C r_0^{1/2}\omega_p(x_0,r_0)^{1/4}   \leq C r_0^{1/2}\eps_0^{1/4}  \le  \frac{1}{4} \kappa c_{\rm jump} \sqrt{r_0} $$
    for all $y \in B(x_0,r_0/4)$, where the last step follows for $\eps_0$ small enough.   This along with \eqref{eq: the smallness case0} shows
    $$\inf_{y \in K\cap B(x_0,r_0/4)}\abs{a^+_{x_0,r_0/4}(y) - a^-_{x_0,r_0/4}(y) } \ge \frac{1}{2}  \kappa c_{\rm jump} \sqrt{r_0}, $$
    and therefore \eqref{eq: before smaller} holds  for $y_0=x_0$, $r =  r_0/4$, and $c_0 = \kappa/2$. 

    \emph{Step 2: Proof of \eqref{eq: before smaller} if \eqref{eq: the smallness case0} does not hold.} When \eqref{eq: the smallness case0} does not hold,  there exists some $\bar{y}\in {K \cap}B(x_0,r_0/4)$ such that 
    \begin{align}\label{eq: the smallness case1}
&\abs{(A^+(x_0,r_0) - A^-(x_0,r_0))\bar{y} +   (b^+(x_0,r_0) - b^-(x_0,r_0))} <  \kappa c_{\rm jump} \sqrt{r_0}.
    \end{align}
    We  now use the estimate $J_{\rm aff}(x_0, r_0  ) \geq c_{\rm jump}$ from  Lemma \ref{lem:forcedJump} to prove that 
    \begin{align}\label{eq: the smallness case2}
        \abs{A^+(x_0,r_0) - A^-(x_0,r_0)} \ge  \frac{1}{4}c_{\rm jump} / \sqrt{r_0}.
    \end{align}
    Indeed, as $|\bar{y}  -x_0| \le r_0/4$, we compute   by  Lemma \ref{lem:forcedJump} and \eqref{eq: the smallness case1}
    \begin{align*} 
        c_{\rm jump}\sqrt{r_0} &\le \abs{a^+_{x_0,r_0}(x_0)  - a^-_{x_0,r_0}(x_0)} + r_0| A^+(x_0,r_0) -  A^-(x_0,r_0)| \\ & \le \abs{a^+_{x_0,r_0}(\bar{y})  - a^-_{x_0,r_0}(\bar{y})} + (r_0 + r_0/4)| A^+(x_0,r_0) -  A^-(x_0,r_0)| \\ &\le \kappa c_{\rm jump} \sqrt{r_0} +  2r_0| A^+(x_0,r_0) -  A^-(x_0,r_0)|.  
    \end{align*}
    As $\kappa \le 1/2$, this shows \eqref{eq: the smallness case2}.

    By Ahlfors-regularity \eqref{eqn:AhlforsReg}, we can find $y_0 \in K \cap B(\overline{y},r_0/4)$ with $|y_0-\bar{y}| \ge 2\bar{c}r_0$ for some   small constant \EEE $0 <\bar{c} < 1/16$ only depending on $C_{\rm Ahlf}$. {Explicitly, if this were not the case, then $K \cap B(\bar{y},r_0/4) = K \cap B(\bar{y},2\bar c r_0)$. With this, we have 
    $$C_{\rm Ahlf}^{-1}r_0/4 \leq \HH^1\big(K\cap B(\bar{y},r_0/4)\big) = \HH^1\big(K\cap B(\bar{y},2\bar{c}r_0)\big)\leq C_{\rm Ahlf} 2\bar{c}r_0 ,$$ which is a contradiction for sufficiently small $\bar{c}$.}
    Note that in particular $y_0 \in K \cap B(x_0,r_0/2)$.
    Our goal is now to show that  for all $y \in K \cap B(y_0,\bar{c}r_0)$
    \begin{align}\label{eq: to show}
        \abs{(A^+(y_0,r) - A^-(y_0,r))y + (b^+(y_0,r) - b^-(y_0,r))} \ge  c_0 c_{\rm jump} \sqrt{r} \quad \quad 
    \end{align}
    for $c_0$ {depending only} on $C_{\rm Ahlf}$,  where {$r :=  \bar{c}r_0$} for shorthand. This then shows \eqref{eq: before smaller}.

    To conclude Step 2, we show \eqref{eq: to show}. We  can estimate the difference of the rigid motions by varying center and radius by Remark~\ref{rem: vary}, where {$\beta(x_0,r_0) \leq r_0/(16 r)$} is guaranteed by  \eqref{eq: 6.3bis} with $\eps_0$ taken smaller depending only on $C_{\rm Ahlf}$. With $\omega_p(x_0,r_0)\le \eps_0$, we {have}
    \begin{align}\label{eq: two pieces1}
        r_0\big|A^\pm(y_0,r) - A^\pm(x_0,r_0)\big|  \le Cr_0^{1/2}\eps_0^{1/4}, 
    \end{align}
    \begin{align}\label{eq: two pieces2}
        \big|(A^\pm(y_0,r) - A^\pm(x_0,r_0))z    +  (b^\pm(y_0,r) - b^\pm(x_0,r_0))\big|  \le Cr_0^{1/2}\eps_0^{1/4} \quad \text{for all $z \in B(x_0,r_0) $}
    \end{align}
    for $C$ depending on $r_0/r$ and thus on  $C_{\rm Ahlf}$. 
    (Note that by the remark we get \eqref{eq: two pieces2} only on $B(y_0,r)$, but the estimate for points in $B(x_0,r_0)$ readily follows for a bigger constant by \eqref{eq: two pieces1}.)  We now use  the triangle inequality several times to conclude. Fix $y \in B(y_0,r)$.  First, by \eqref{eq: two pieces2} {with} $z =\bar{y}$ we get 
    \begin{align*}
        d_{y,r}:= &\abs{ (A^+(y_0,r) - A^-(y_0,r))y + (b^+(y_0,r) - b^-(y_0,r))}\\
                  &\ge  \big| (A^+(y_0,r) - A^-(y_0,r))y + (b^+(x_0,r_0) - b^-(x_0,r_0))  \\ 
                  & \ \ \ \ \ \  -   (A^+(y_0,r) - A^+(x_0,r_0))\bar{y}  + (A^-(y_0,r) - A^-(x_0,r_0))\bar{y} \big|  - Cr_0^{1/2}\eps_0^{1/4} .
    \end{align*}
    Then, employing   \eqref{eq: the smallness case1} we find 
    \begin{align*}
        d_{y,r} & \ge  \big| (A^+(y_0,r) - A^-(y_0,r))y     -   A^+(y_0,r)\bar{y}  + A^-(y_0,r)\bar{y}\big|  -  Cr_0^{1/2}\eps_0^{1/4} - \kappa c_{\rm jump} \sqrt{r_0} \\ & = \big| (A^+(y_0,r) - A^-(y_0,r))(y-\bar{y})   \big|  -  Cr_0^{1/2}\eps_0^{1/4} - \kappa c_{\rm jump} \sqrt{r_0}.
    \end{align*}
    {As {$A^+(y_0,r) - A^-(y_0,r)$} is a $2 \times 2$ skew-symmetric matrix and $$|y - \bar{y}| \ge |y_0-\bar{y}|  - |y - y_0|  \ge  2\bar{c} r_0 - \bar{c} r_0 =  \bar{c} r_0,$$ we see that
        \begin{align*}
            \abs{\left(A^+(y_0,r) - A^-(y_0,r)\right)(y - \bar{y})} &= \frac{1}{\sqrt{2}} \abs{A^+(y_0,r) - A^-(y_0,r)} \abs{y - \bar{y}} \geq \frac{\bar{c} r_0}{\sqrt{2}} \abs{A^+(y_0,r) - A^-(y_0,r)}.
    \end{align*}}
    Then by \eqref{eq: the smallness case2} and \eqref{eq: two pieces1}  we find
    \begin{align}\label{eq: end of the step}
        d_{y,r} & \ge  \frac{\bar{c} \sqrt{r_0}}{4\sqrt{2}}c_{\rm jump}    -  \frac{2  C\bar{c}r_0}{\sqrt{2}} \eps_0^{1/4} r_0^{-1/2}  - Cr_0^{1/2}\eps_0^{1/4}- \kappa c_{\rm jump} \sqrt{r_0}  \notag \\
                & \ge \frac{ \sqrt{\bar{c}} \sqrt{r}}{4\sqrt{2}} c_{\rm jump}    -   C r_0^{1/2}\eps_0^{1/4}   - \frac{\kappa}{\sqrt{\bar{c}}} c_{\rm jump} \sqrt{r},
    \end{align}
    where in the second line we used a bigger constant $C$ and $r = \bar{c}r_0$. For $\eps_0$ and $\kappa$ small enough (depending only on $\bar{c}$, $c_{\rm jump}$, $C$ and thus only on $\mathbb{C}$), we obtain  \eqref{eq: to show} for $c_0 = \sqrt{\bar{c}}/8$ and consequently \eqref{eq: before smaller}, as desired.  This concludes Step 2 of the proof.

    \emph{Step 3: Proof of the statement.} Let us now show the statement. To this end, let   $y_0 \in B(x_0,r_0/2) \cap K$ and  $r \in [\bar{c}r_0, r_0/4]$ such that  \eqref{eq: before smaller} holds.  We choose $\gamma \in (0,\bar{c})$  small enough (depending on $c_0, \bar{c}$ given in \eqref{eq: before smaller},  $ c_{\rm jump}$ and $C_0$) such that
    \begin{align}\label{eq: sigma}
        \frac{1}{2}  \bar{c}^{1/2}c_0c_{\rm jump}    \gamma^{-1/2} \ge  C_0.
    \end{align}
    We now choose $\eps_0$ sufficiently small (depending also on $\gamma$) such that
    \begin{align}\label{eq: eps00}
        C     \eps_0^{1/4}   /\bar{c}^{ 1/2 \EEE} \EEE  \le \frac{1}{4} c_0c_{\rm jump}   , \quad \quad \eps_0 \le \frac{\gamma}{16}, 
    \end{align}
    where $C$ denotes the constant in \eqref{eqn:bAControl}. 
    {As $\beta(x_0,r_0) \leq \varepsilon_0$ and $y_0 \in B(x_0,r_0/2)$, we know by Remark~\ref{rmk_beta} that for all $t \in [\gamma r_0,r_0/2]$ we can control $\beta(y_0,t) \leq 2 \gamma^{-1} \varepsilon_0$, and thus  $\beta(y_0,t) \leq 1/8$  by \eqref{eq: eps00}.}
    {Therefore, we can} apply (\ref{eqn:bAControl}) for $t \in [\gamma r_0,r]$ to find by \eqref{eq: 6.3bis},  \eqref{eq: eps00},  \eqref{eqn:naiveUpEst}, $p > 4/3$, and $r \geq \bar{c} r_0$ that
    \begin{equation*}
        \abs{a^\pm_{y_0,r}(y) - a^\pm_{y_0,t}(y) } \leq C r^{1/2} {\omega_p}(x_0,r)^{1/4} \le C {r^{1/2}}      \eps_0^{1/4}   / \bar{c}^{ 1/2 \EEE} \EEE  \le \frac{1}{4} c_0c_{\rm jump}  \sqrt{r} \quad \text{for all $y \in B(y_0,t)  $}.
    \end{equation*}
    The estimate  $ J(y_0,r) \geq c_0c_{\rm jump}$    established in \eqref{eq: before smaller}  can be equivalently written as  $\abs{a^+_{y_0,r}(y)  - a^-_{y_0,r}(y)} \ge c_0c_{\rm jump}  \sqrt{r}$ for all ${y} \in K \cap B(y_0,r)$, see \eqref{def:normalizedJump}.   This   implies 
    $\abs{a^+_{x_0,t}(y)  - a^-_{x_0,t}(y)} \ge \frac{1}{2}c_0c_{\rm jump}  \sqrt{r}$ for all $y \in K \cap B(y_0,t)$, and therefore by \eqref{eq: sigma} and $r \ge \bar{c}r_0$  we derive for $t= \gamma r_0$ that
    $${J}(y_0,t) \ge  \frac{1}{2}c_0c_{\rm jump}  r^{1/2} t^{-1/2} \ge  \frac{1}{2}   \bar{c}^{1/2}c_0c_{\rm jump}    r_0^{1/2} t^{-1/2} =     \frac{1}{2}  \bar{c}^{1/2}c_0c_{\rm jump}   \gamma^{-1/2} \ge  C_0.  $$    
    This concludes the proof. 
\end{proof}

\section*{Acknowledgements} 
This work was supported by the DFG project FR 4083/3-1 and by the Deutsche Forschungsgemeinschaft (DFG, German Research Foundation) under Germany's Excellence Strategies EXC 2044 -390685587, Mathematics M\"unster: Dynamics--Geometry--Structure and EXC 2047/1 - 390685813, and NSF (USA) RTG grant DMS-2136198. The majority of this work was completed while K.S. was affiliated with the Hausdorff Center for Mathematics at the University of Bonn. K.S. also thanks Sergio Conti for insightful discussions on related problems.


\typeout{References}

%
%


\begin{thebibliography}{10}







%
%
%



    \bibitem{Ambrosio:90}
    {\sc L.~Ambrosio}. 
    \newblock {\em Existence theory for a new class of variational problems}.
    \newblock {Arch.\ Ration.\ Mech.\ Anal.}
    \newblock {\bf 111} (1990), 291--322. 





    \bibitem{Ambrosio-Morel}
    {\sc L.~Ambrosio, V.~Caselles, S.~Masnou, J.~Morel}.
    \newblock {\em Connected components of sets of finite perimeter
    and applications to image processing}.
    \newblock {J.\ Eur.\ Math.\ Soc.\ (JEMS)}
    \newblock {\bf 3}   (2001),   39--92.  



    \bibitem{ACD} 
    {\sc L~ Ambrosio, A.~Coscia, G.~Dal Maso}.
    \newblock {\em Fine properties of functions with bounded deformation}. 
    \newblock {Arch.\ Ration.\ Mech.\ Anal.}
    \newblock {\bf 139} (1997), 201--238.



    \bibitem{AFH}
    {\sc L.~Ambrosio, N.~Fusco, J.~E.~Hutchinson}.
    \newblock {\em Higher integrability of the gradient and dimension of the singular set for minimisers of the Mumford-Shah functional.}
    \newblock Calc. Var. Partial Differential Equations {\bf 16} (2003),   187–215.



%
%





    \bibitem{Ambrosio-Fusco-Pallara:2000} 
    {\sc L.~Ambrosio, N.~Fusco, D.~Pallara}.
    \newblock {\em Functions of bounded variation and free discontinuity problems}. 
    \newblock Oxford University Press, Oxford (2000). 

%
%
%
%
%

    \bibitem{BIL}
    {\sc J.~F.~Babadjian, A.~Lemenant, F.~Iurlano}.
    \newblock  {\em Partial regularity for the crack set minimizing the two-dimensional Griffith energy}.
    \newblock {J.\ Eur.\ Math.\ Soc.\ (JEMS)}
    \newblock {\bf 24} (2022), 2443--2492.



%



    \bibitem{Bourdin-Francfort-Marigo:2008}
    {\sc B.~Bourdin, G.~A.~Francfort, J.~J.~Marigo}. 
    \newblock {\em The variational approach to fracture}.
    \newblock {J.\ Elasticity} 
    \newblock {\bf 91} (2008), 5--148. 






    \bibitem{CCS}	
    {\sc F.~Cagnetti, A.~Chambolle, L.~Scardia}.
    \newblock{\em Korn and Poincaré-Korn inequalities for functions with a small jump set}.
    \newblock Mathematische Annalen 
    \newblock {\bf 383} (2022), 1179--1216.





    \bibitem{Chambolle-Conti-Francfort:2014}
    {\sc A.~Chambolle, S.~Conti, G.~Francfort}.
    \newblock {\em Korn-Poincar\'e inequalities for functions
    with a small jump set.} 
    \newblock {Indiana Univ.\ Math.\ J.}
    \newblock {\bf 65} (2016), 1373--1399. 


    \bibitem{Iu3}
    {\sc A.~Chambolle, S.~Conti, F.~Iurlano}.
    \newblock {\em Approximation of functions with small jump sets and existence of strong minimizers of Griffith's energy}. 
    \newblock  J.\ Math.\ Pures\ Appl.\
    \newblock {\bf 128} (2019), 119--139.



    \bibitem{CrismaleCalcVar}
    {\sc A.~Chambolle, V.~Crismale}.
    \newblock {\em Existence of strong solutions to the Dirichlet problem for the Griffith energy}. 
    \newblock Calc.\ Var.\  Partial Differential Equations
    \newblock {\bf 58} (2019),  1--27.




    \bibitem{Crismale}
    {\sc A.~Chambolle, V.~Crismale}.
    \newblock {\em Compactness and lower semicontinuity in $GSBD$}. 
    \newblock     {J.\ Eur.\ Math.\ Soc.\ (JEMS)} \textbf{23} (2021), 701--719.







    \bibitem{Conti-Iurlano:15.2}
    {\sc S.~Conti, M.~Focardi, F.~Iurlano}.
    \newblock {\em Integral representation for functionals defined on $SBD^p$ in dimension two}. 
    \newblock Arch.\ Ration.\ Mech.\ Anal.\  
    \newblock {\bf 223} (2017), 1337--1374.





    %




    \bibitem{Conti-Focardi-Iurlano:19}
    {\sc S.~Conti, M.~Focardi, F.~Iurlano}.
    \newblock {\em Existence of strong minimizers for the Griffith static fracture model in dimension two}. 
    \newblock  Ann.\ Inst.\ H.\ Poincaré Anal.\ Non Linéaire
    \newblock {\bf 36} (2019), 455--474. 






    \bibitem{DalMaso:13}
    {\sc G.~Dal Maso}.
    \newblock {\em Generalized functions of bounded deformation}.
    \newblock {J.\ Eur.\ Math.\ Soc.\ (JEMS)}
    \newblock {\bf 15} (2013), 1943--1997.



    \bibitem{DavidKZ}
    {\sc G.~David}.
    \newblock {\em Opérateurs d'intégrale singulière sur les surfaces régulières.}
    \newblock Annales scientifiques de l'École Normale Supérieure,
    \newblock Série 4 {\bf 21} (1988),  225--258.

    \bibitem{DavidC1} 
    {\sc G.~David}.
    \newblock {\em $C^1$ arcs for minimizers of the Mumford-Shah functional}.
    \newblock SIAM J. Appl. Math. {\bf 56} (1996),  783--888.

    \bibitem{David} 
    {\sc G.~David}.
    \newblock {\em Singular sets of minimizers for the Mumford-Shah functional}. 
    \newblock Progress in Mathematics, Birkhäuser (2005).



%




    \bibitem{DS}
    {\sc G.~David, S.~Semmes}.
    \newblock {\em Analysis of and on uniformly rectifiable sets.}
    \newblock Mathematical Surveys and Monographs 38, American Mathematical Society, Providence, RI, (1993).


    \bibitem{DS2}
    {\sc G.~David, S.~Semmes}.
    \newblock {\em Uniform rectifiability and singular sets.}
    \newblock Ann.\ Inst.\ H.\ Poincaré Anal.\ Non Linéaire
     \newblock {\bf 13} (1996), 383--443.
     

    \bibitem{DaSe}
    {\sc G.~David, S.~Semmes}. 
    \newblock {\em Fractured fractals and broken dreams: Self-similar
    geometry through metric and measure}. 
    \newblock Oxford Lecture series in mathematics
    and its applications 7, Oxford University Press (1997).





    \bibitem{DeGiorgi}
    {\sc E.~De Giorgi.}
    \newblock {\em Free discontinuity problems in calculus of variations.}
    \newblock In Frontiers in pure and applied mathematics, 55--62. North-Holland, Amsterdam, (1991).






    \bibitem{deGiorgiCarrieroLeaci}
    {\sc E.~{De Giorgi}, M.~Carriero, A.~Leaci}.
    \newblock {\em Existence theorem for a minimum problem with free discontinuity set}. 
    \newblock Arch.\ Ration.\ Mech.\ Anal.\  
    \newblock {\bf 108} (1989), 195--218.

    \bibitem{DLF}
    {\sc C.~De Lellis, M.~Focardi}.
    \newblock {\em Higher integrability of the gradient for minimizers of the 2d Mumford--Shah energy.}
    \newblock J. Math. Pures Appl. {\bf 100} (2013),  391--409.



    \bibitem{DPF}
    {\sc G.~De Philippis, A.~Figalli}.
    \newblock {\em Higher integrability for minimizers of the Mumford-Shah functional.}
    \newblock Arch.\ Ration.\ Mech.\ Anal. {\bf 213} (2014),  491--502.




    \bibitem{FFLM07}
    {\sc I.~Fonseca, N.~Fusco, G.~Leoni, M.~Morini}.
    \newblock {\em Equilibrium configurations of epitaxially strained crystalline films: existence and regularity results}. 
    \newblock Arch.\ Ration.\ Mech.\ Anal.\  
    \newblock {\bf 186} (2007), 477--537.


%



    \bibitem{Francfort-Marigo:1998}
    {\sc G.~A.~Francfort, J.~J.~Marigo}. 
    \newblock {\em Revisiting brittle fracture as an energy minimization problem}.
    \newblock J.\ Mech.\ Phys.\ Solids 
    \newblock {\bf 46} (1998), 1319--1342. 



 








    \bibitem{Friedrich:15-3} 
    {\sc M.~Friedrich}.
    \newblock {\em A Korn-type inequality in  SBD for functions with small jump sets}. 
    \newblock   Math.\ Models Methods Appl.\ Sci.\ (M3AS)
    \newblock {\bf 27} (2017), 2461--2484.







    \bibitem{Friedrich:15-5}
    {\sc M.~Friedrich}.
    \newblock {\em On a decomposition of regular domains into John domains with uniform constants}. 
    \newblock {ESAIM\ Control\ Optim.\ Calc.\ Var.}   
    \newblock {\bf 24} (2018), 1541--1583.


    \bibitem{Friedrich:15-4}
    {\sc M.~Friedrich}.
    \newblock {\em A piecewise Korn inequality in SBD and applications to embedding and density results}. 
    \newblock  SIAM J.\ Math.\ Anal.\
    \newblock {\bf 50} (2018), 3842--3918.






    \bibitem{FLSeps}
    {\sc M.~Friedrich, C.~Labourie, K.~Stinson}.
    \newblock {\em An epislon-regularity result for Griffith almost-minimizers in the plane}. 
    \newblock {Preprint} (2025).    {\tt https://arxiv.org/abs/2310.07670}. \EEE

    
    \bibitem{FLS2}
    {\sc M.~Friedrich, C.~Labourie, K.~Stinson}.
    \newblock {\em Strong existence for free discontinuity problems in linear elasticity}. 
    \newblock  SIAM J.\ Math.\ Anal.\
    \newblock {\bf 57} (2025), 1652--1679.






    \bibitem{Solombrino}
    {\sc M.~Friedrich, F.~Solombrino}.
    \newblock {\em Quasistatic crack growth in $2d$-linearized elasticity}. 
    \newblock  {Ann.\ Inst.\ H.\ Poincar\'e\ Anal.\ Non Lin\'eaire}
    \newblock {\bf 35} (2018), 27--64. 


    \bibitem{FM}
    {\sc M.~Friedrich, F.~Solombrino}.
    \newblock {\em Functionals defined on piecewise rigid funtions: integral representation and $\Gamma$-convergence}. 
    \newblock {Arch.\ Ration.\ Mech.\ Anal.}  
    \newblock {\bf 236} (2020), 1325--1387.



    \bibitem{griffith}
    {\sc A.~A.~Griffith}. {\em The phenomena of rupture and flow in solids}. {Philos. Trans. R. Soc. London} \textbf{221} (1921), 163--198.


































    \bibitem{CL}
    {\sc C.~Labourie, A.~Lemenant}.
    \newblock {\em An epsilon-regularity theorem for Griffith almost-minimizers in $\R^N$ under a separating condition}.
  \newblock {Arch.\ Ration.\ Mech.\ Anal.}  
    \newblock {\bf 247} (2023), Art.~105.


 

    \bibitem{CL2}
    {\sc C.~Labourie, A.~Lemenant}.
    \newblock {\em Regularity improvement for the minimizers of the two-dimensional Griffith energy}.
  \newblock Atti Accad.\ Naz.\ Lincei Cl.\ Sci.\ Fis.\ Mat.\ Natur.\
    \newblock {\bf 34} (2023), 337--357.

 
 

%

    \bibitem{Lemenant} 
    {\sc A.~Lemenant}. 
    \newblock {\em Regularity of the singular set for Mumford-Shah minimizers in $\R^3$ near a minimal cone}. 
    \newblock Ann.\ Sc.\ Norm.\ Super.\ Pisa Cl.\ Sci.\
    \newblock {\bf 10} (2011), 561--609.

 

%
%



    \bibitem{MaSo}
    {\sc F.~Maddalena, S.~Solimini}.
    \newblock {\em Regularity properties of free discontinuity sets}.
    \newblock Ann.\ Inst.\ H.\ Poincaré Anal.\ Non Linéaire {\bf 18} (2001),  675--685.

    \bibitem{maggi}
    {\sc F.~Maggi}.
    \newblock {\em Sets of Finite Perimeter and Geometric Variational Problems: An Introduction to Geometric Measure Theory}. 
    \newblock Cambridge Studies in Advanced Mathematics 135, Cambridge University Press, Cambridge (2012). 


%
%


%



    \bibitem{Rigot}
    {\sc S.~Rigot}.
    \newblock {\em Big pieces of $C^{1,\alpha}$-graphs for minimizers of the Mumford-Shah functional}.
    \newblock Ann.\ Scuola Norm.\ Sup.\ Pisa Cl.\ Sci.\ {\bf 29} (2000), no. 2, 329--349.










\end{thebibliography}
\end{document}